\newcommand{\MF}{\Upsilon}
\newcommand{\AC}{A}   % function for Acyclic Polynomial
\numberwithin{equation}{section}
\newtheorem{Theorem}{Theorem}[section]
\newtheorem{Lemma}[Theorem]{Lemma}
\newtheorem{Definition}[Theorem]{Definition}
\newtheorem{Question}[Theorem]{Question}
\newtheorem{Corollary}[Theorem]{Corollary}
\newenvironment{Proof}{\begin{trivlist} \item[] {\bf Proof.}}{\hfill $\Box$\end{trivlist}}
\newcommand{\NP}{$\mathsf{NP}$}
\newcommand{\sharpP}{$\mathsf{\#P}$}
\DeclareMathAlphabet{\ORIGmathcal}{OMS}{cmsy}{m}{n}   % This sets the \OLDmathcal font back to its default, which got overridden by the mathptmx package
\newcommand{\Complex}{\mathbb C}
\newcommand{\QED}{\hfill $\Box$}
\newcommand{\ds}{\displaystyle}
\renewcommand{\geq}{\geqslant}
\renewcommand{\leq}{\leqslant}
\title{Acyclic polynomials of graphs}
\author{Caroline Barton and Jason I.\ Brown \\
carolinebarton@dal.ca, jason.brown@dal.ca \\
Department of Mathematics and Statistics\\
Dalhousie University \\
Halifax, Nova Scotia \\
Canada. B3H 4R2 \\ \\
David A.\ Pike\\
dapike@mun.ca\\
Department of Mathematics and Statistics\\
Memorial University of Newfoundland\\
St.\ John's, Newfoundland\\
Canada. A1C 5S7 }
\begin{document}

\date{\today}

\maketitle

\begin{center} \large Abstract \end{center}
For each nonnegative integer $i$, let $a_i$ be the number of $i$-subsets of $V(G)$ that induce an acyclic subgraph of a given graph $G$.
We define $\AC(G,x) = \sum_{i \geq 0} a_i x^i$ (the generating function for $a_i$) to be the {\it acyclic polynomial} for $G$.
After presenting some properties of these polynomials, we investigate the nature and location of their roots.

\vspace*{\baselineskip}
\noindent
Key words: acyclic graphs, decycling, graph polynomials, acyclic polynomial

\vspace*{\baselineskip}
\noindent
AMS subject classifications: 05C31, 05C38, 05C30

%\begin{doublespace}

\section{Introduction}

A variety of graph polynomials have been studied, both for applied and theoretical considerations. Perhaps the best known family, {\em chromatic polynomials}, counts the number of proper colourings of graphs, and was introduced in the study of the Four Colour Problem, but has morphed over the years into an important field in its own right (see, for example, \cite{dongbook}). {\em Reliability polynomials} are a well-studied model of network robustness to probabilistic failures, and have attracted interest for both applied and pure perspectives (see \cite{colbook}). Other graph polynomials have arisen as generating functions for subsets of vertex sets or edge sets of a graph, especially those having certain properties. Such polynomials allow for a finer investigation of the graph property in question, allowing an encoding of the minimum or minimum cardinality of such sets and the totality of the number of such sets.
%but a listing of their respective sizes.
For example, {\em independence polynomials} are generating functions for independent sets of a graph, while {\em domination polynomials} enumerate dominating sets. For all these graph polynomials, work has varied from calculation and optimality to analytic properties and roots.

Given a (finite, undirected) graph $G$, we define the  {\it acyclic polynomial}, $\AC(G,x)$, of $G$ to be the generating function
for the number of acyclic subsets of $V(G)$ ({\it i.e.}, vertex subsets that induce acyclic subgraphs of $G$).
Specifically,
$$\ds \AC(G,x) = \sum_{i \geq 0} a_i x^i$$
where
$a_i = |\{ S \subseteq V(G) : |S|=i \mbox{~and~} G[S] \mbox{~is acyclic} \}|$
is the number of acyclic vertex sets of cardinality $i$.%
\footnote{Note that the term ``acyclic polynomial'' already exists within the scientific literature.
Historically, it referred to a polynomial
that is now more commonly described as the ``matching polynomial'' and which is closely related to the generating function for the number of
matchings of size $i$ within a graph.  See~\cite[page 263]{EMM2011} for more details.
Given that the term ``acyclic polynomial'' has fallen out of use from its original context, we now put the term to new use
by defining it as the generating function
for the number of acyclic induced subgraphs of order $i$ that are within a given graph.
}
We remark that the collection $\mathcal{A}(G)$ of acyclic vertex subsets (that is, subsets of $V(G)$ that induce an acyclic subgraph)  of a graph $G$ forms a {\it (simplicial) complex},
that is, it is closed under containment, and the acyclic polynomial of $G$ is what is known as the {\it face polynomial}, or simply $f$-{\it polynomial},
of the complex (the {\it (combinatorial) dimension} of the complex is the maximum size of any set in the complex, and we shall say that $G$ has {\em  acyclic dimension} $d$ if the acyclic complex of $G$,  $\mathcal{A}(G)$, has dimension $d$).

In the remainder of this first section of the paper we discuss several aspects acyclic polynomials
such as how they relate to decycling of graphs,
how they do (or do not) encode certain graph invariants,
the computational complexity of determining and/or evaluating them,
and showing that they do not arise from evaluations of Tutte polynomials.

In Section~\ref{Section-AcyclicRoots} we focus on {\em acyclic roots}, namely the roots of acyclic polynomials.
For acyclic polynomials of degree $3$ we show that the roots all lie in the left half of the complex plane, and that arbitrarily large modulus is possible.
%In Section~\ref{Section-RealRoots}
For acyclic polynomials more generally ({\it i.e.}, of any degree) we characterize those graphs for which the roots are all real
as well as which rational numbers are acyclic roots.
We also consider the maximum and minimum growth of the moduli of the roots, and we show that roots can exist in the right half-plane.
%We also show that roots can exist in the right half-plane, and consider the maximum and minimum growth of the moduli of the roots.
%We also show that there are graphs with an arbitrarily large (albeit negative) real root, as well as graphs with roots that lie in the right-half plane.
In Section~\ref{Section-OpenProblems} we conclude with several open problems.

\iffalse
There is much that could be explored about acyclic polynomials, but we focus next on what can be said about {\em acyclic roots}, namely the roots of acyclic polynomials.  In the next section, we give a description of the acyclic roots of graphs with small acyclic dimension, completely determine when the roots are all real, which rational numbers are roots of acyclic polynomials, whether roots can exist in the right half-plane, and consider the maximum and minimum growth of the moduli of the roots.
%We conclude in Section~\ref{Section-ClosingRemarks}
%with some discussion about decycling as well as some questions that we pose.
\fi

\subsection{Acyclic Polynomials and Decycling}

%Our initial motivation for studying acyclic polynomials arose from problems involving decycling of graphs.
%Our interest in acyclic polynomials arose from a different quarter.
%Our interest in acyclic polynomials arose from the notion of decycling.
Any subset $S$ of the vertex set $V(G)$ of a graph $G$ such that $G-S$ (the subgraph of $G$ that is induced by the vertices of $V(G) \setminus S$) is a forest
({\it i.e.}, acyclic) is known as a {\it decycling set} or {\it feedback vertex set} for the graph $G$.
For an introduction to the topic of decycling of graphs, see~\cite{BB2002,BV1997}.
Determining whether an arbitrary graph $G$ has a decycling set of a given cardinality is an {\NP}-complete problem~\cite{Karp1972}.
Nevertheless, calculating the size $\nabla(G)$ of a smallest decycling set for a graph $G$
is a problem of practical interest as it has several natural applications,
such as that of avoiding short-circuits and other forms of feedback in electrical networks~\cite{FPR1999}.
For certain classes of graphs this problem is tractable, such as for complete graphs, complete bipartite graphs,
cubic graphs~\cite{LL1999,UKG1988}, Cartesian products of cycles~\cite{PikeZou2005}, and generalized Petersen graphs~\cite{GXWZY2015}.
For instance,
%if we let $\nabla(G)$ denote the size of a smallest decycling set for the graph $G$, then
$\nabla(K_n) = n-2$ whenever $n \geq 2$,
$\nabla(K_{m,n}) = \min\{m,n\}-1$ whenever $\min \{m,n\} \geq 2$,
and $\nabla(C_m \Box C_n) = \lceil \frac{mn+2}{3} \rceil$ whenever $m,n \in \{3,5,6,7,\ldots\}$.

The complement $V(G) \setminus S$ of a decycling set $S$ in a graph $G$ induces an acyclic subgraph of $G$.
Let $\MF(G) = |V(G)| - \nabla(G)$ denote the {\em acyclic dimension} of $G$ and observe that $\MF(G)$ is also the degree of the acyclic polynomial $\AC(G,x)$.
Hence determining $\AC(G,x)$ enables the decycling number $\nabla(G)$ to be found.
This observation, and the potential that acyclic polynomials might address some problems involving decycling of graphs,
provided our initial motivation for studying acyclic polynomials.

We note that the acyclic dimension of graphs has itself been an active area of research.
At a conference in 1977, Albertson and Berman posed the still-open conjecture that $\MF(G) \geq |V(G)|/2$ for any simple planar graph $G$~\cite{AB1979};
a result of Borodin implies that $\MF(G) \geq 2 |V(G)| / 5$ for every planar graph $G$~\cite{Borodin1979}.
For graphs in general (not necessarily planar) in 1987, Alon, Kahn and Seymour
established a lower bound of
$\MF(G) \geq \sum_{v \in V(G)} \min\{1, \frac{2}{1 + \deg(v)} \}$
and hence $\MF(G) \geq 2|V(G)| / (1+ \Delta(G))$, where $\Delta(G)$ denotes the maximum degree of $G$~\cite{AKS1987}.
More recently it has been shown that
$\MF(G) \geq (8|V(G)| - 2|E(G)| -2)/9$
for every connected graph $G$~\cite{ShiXu2017}
and also that
$\MF(G) \geq 6|V(G)| / (2\Delta(G) + \omega(G) +2)$ where $\omega(G)$ denotes the order ({\it i.e.}, the number of vertices) of a maximum clique in $G$~\cite{Kogan}.

\subsection{Acyclic Polynomials and Graph Invariants}

While determining the degree $\MF(G)$ of $\AC(G,x)$ is generally intractable,
certainly some individual coefficients of the acyclic polynomial $\AC(G,x) = \sum_{i \geq 0} a_i x^i$ of a given graph $G$ can be easily be determined.
Let $\sigma_i = \sigma_i(G)$ denote the number of $i$-cycles in a graph $G$ of order $n$ and observe:

\begin{itemize}

\item Clearly $a_i \leq \binom{n}{i}$ for all $i$.
\iffalse
, and for $i \leq 2$,
$a_i = \binom{n}{i}$, as every subset of vertices of these sizes is necessarily acyclic (a subset of vertices here and elsewhere will refer to the subgraph it induces as well) for every acyclic polynomial $\AC(G,x)$.
In particular, $a_0 = 1$, $a_1 = n$ and $a_2 = \binom{n}{2}$.
% and $a_2 = \binom{n}{2}$.
\fi
%\item
Moreover, if a non-acyclic graph $G$ has girth $g$ then
$a_i = \binom{n}{i}$ for each $i \in \{0,1,\ldots, g-1\}$
and
$a_{g}  = \binom{n}{g} - \sigma_g < \binom{n}{g}$,
whereas if $G$ is acyclic then
$a_i = \binom{n}{i}$ for each $i \leq n$.
In particular, $a_0 = 1$, $a_1 = n$ and $a_2 = \binom{n}{2}$.

\item As $a_i$ is the number of sets (or {\it faces}) of cardinality $i$ in the complex $\mathcal{A}(G)$ and the complex clearly has dimension  $\MF(G) = n-\nabla(G)$, we have  $a_i > 0$ for each $i \in \{0,1,\ldots,\MF(G)\}$ and $a_i = 0$ for each $i > \MF(G)$.

\item There are well known inequalities, known as {\it Sperner bounds} (see, for example, \cite{Sperner1928}), for the cardinalities of  faces of each size in a complex, and these imply that
$$a_{i} \leq \left( \frac{n-i+1}{i} \right) a_{i-1}.$$

\end{itemize}

We summarize these observations as follows:

\begin{Theorem}
If $G$ and $H$ have the same acyclic polynomial, then they have the same order, girth and decycling number. \QED
\end{Theorem}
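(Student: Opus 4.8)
The plan is to recover each of the three invariants directly from the coefficient sequence of the acyclic polynomial, so that the hypothesis---having the \emph{same} acyclic polynomial, i.e.\ $a_i(G) = a_i(H)$ for every $i$---forces the invariants to agree. Concretely, it suffices to exhibit for each of order, girth, and decycling number a recipe that takes as input only the sequence $(a_i)_{i \geq 0}$ and returns the invariant; every such recipe is already furnished by the observations collected immediately above the statement.

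First I would dispatch the order and the decycling number, which are the easiest. For the order, the observation that $a_1 = n$ shows $n$ is simply the coefficient of $x^1$, so $G$ and $H$ share the same order. For the decycling number, recall that $\deg \AC(G,x) = \MF(G) = n - \nabla(G)$; the degree is intrinsic to the polynomial (it is the largest index $i$ with $a_i > 0$, which is well defined since $a_0 = 1$ guarantees the polynomial is nonzero), and $n$ has just been recovered from $a_1$. Hence $\nabla(G) = a_1 - \deg \AC(G,x)$ is a function of the coefficients alone, and the two graphs share the same decycling number.

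The girth requires slightly more care, and this is where I expect the only real subtlety to lie. Using the observation that a non-acyclic graph of girth $g$ satisfies $a_i = \binom{n}{i}$ for all $i < g$ while $a_g = \binom{n}{g} - \sigma_g < \binom{n}{g}$, the girth is precisely the least index $i$ at which $a_i$ drops strictly below $\binom{n}{i}$. Since $n = a_1$ has already been determined, every binomial coefficient $\binom{n}{i}$ is determined, so this least index---when it exists---depends only on the coefficient sequence. The one case that must be treated separately is when no such index exists: by the same observation this happens exactly when $G$ is acyclic, in which case $a_i = \binom{n}{i}$ for all $i \leq n$ and the girth is infinite by convention. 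Because equal polynomials yield identical comparisons $a_i$ versus $\binom{n}{i}$ across all $i$, the two graphs fall into the same case and return the same girth (finite and equal, or both infinite).

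Assembling the three recipes completes the argument: order, girth, and decycling number are each expressible as functions of the coefficient sequence of $\AC(\cdot\,,x)$, so any two graphs with a common acyclic polynomial must agree on all three. The only genuine obstacle is the bookkeeping around the forest case for girth; apart from that, the proof is an immediate consequence of the preceding bulleted observations.
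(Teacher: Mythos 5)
Your proposal is correct and follows exactly the paper's own route: the theorem is stated with an immediate \QED precisely because, as you observe, order ($a_1 = n$), decycling number ($\nabla(G) = n - \deg \AC(G,x)$), and girth (the least $i$ with $a_i < \binom{n}{i}$, or infinite if none exists) are all read off from the coefficient sequence via the preceding bulleted observations. Your explicit handling of the acyclic case for girth is a welcome bit of care that the paper leaves implicit.
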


While the acyclic polynomial encodes the order, girth and decycling number of a graph, it does not encode some other basic invariants:

\begin{itemize}

\item The acyclic polynomial does not encode the number of edges. For example, any two acyclic graphs of order $n$ share the same acyclic polynomial $(1+x)^n$, but they can have a different number of edges if $n \geq 2$. Even for connected graphs that contain cycles, there are examples.
Let $n$ be an odd integer,
let $G_1$ denote the graph obtained by
adding a single pendant vertex to one of the vertices of $K_{n-1}$,
and let $G_2$ denote the graph obtained by
removing the $\frac{n-1}{2}$ edges of a maximum matching from the complete graph $K_n$.
Then $\AC(G_1,x) = \AC(G_2,x) = 1 + nx + \binom{n}{2}x^2 + \binom{n-1}{2}x^3$.
However, for all $n \geq 5$, $|E(G_1)| \neq |E(G_2)|$
and so as $n$ varies we obtain an infinite family of pairs of distinct connected graphs that share the same acyclic polynomial
but have different numbers of edges.

\item The acyclic polynomial does not encode whether a graph is bipartite.
To demonstrate this, it suffices to find two graphs, one bipartite and the other not bipartite, that share the same acyclic polynomial.
Two such graphs are illustrated in Figure~\ref{Fig:BipartiteEncode}.
These two graphs both have
$1 + 7x + 21x^2 + 35x^3 + 32x^4 + 12x^5$ as their acyclic polynomial.

\item Whereas the acyclic polynomial does encode the girth of a graph, it can be observed from the graphs shown in
Figure~\ref{Fig:BipartiteEncode}
that the circumference of a graph is not encoded
(note that the graph on the right is Hamiltonian, but the one on the left is not Hamiltonian).

\end{itemize}

\begin{Theorem}
The acyclic polynomial does \underline{not} encode the number of edges, the bipartiteness or the circumference of a graph. \QED
\end{Theorem}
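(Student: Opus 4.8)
The plan is to prove the theorem purely by counterexample: for each of the three invariants I would exhibit a pair of graphs that share an acyclic polynomial yet differ in the invariant in question. Since the three candidate pairs have already been named above, the work reduces to verifying, for each pair, that the two acyclic polynomials genuinely coincide and that the relevant invariant genuinely differs.

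For the number of edges I would first dispose of the acyclic case: any forest on $n$ vertices induces only forests on its subsets, so $a_i = \binom{n}{i}$ for every $i$ and the acyclic polynomial is $(1+x)^n$ independently of the structure, whereas a forest with $c$ components has exactly $n-c$ edges; taking $c=1$ and $c=2$ (possible for $n \geq 2$) yields two acyclic graphs with equal polynomial but unequal edge counts. For the connected example I would compute $\AC(G_1,x)$ and $\AC(G_2,x)$ directly from the coefficient facts established earlier: both graphs have order $n$ and girth $3$, which forces $a_0,a_1,a_2$ to equal $1,\,n,\,\binom{n}{2}$, and both satisfy $\MF(G_1)=\MF(G_2)=3$ because every $4$-subset of either graph induces a cycle (for $G_1$ any four vertices include at least three from $K_{n-1}$, hence a triangle; for $G_2$ a $4$-set omits $0$, $1$ or $2$ matching edges and so induces $K_4$, $K_4-e$ or $C_4$, each of which contains a cycle). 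It then remains only to compute $a_3 = \binom{n}{3} - \sigma_3$. Triangle counting gives $\sigma_3(G_1) = \binom{n-1}{3}$ since the pendant vertex lies in no triangle, and $\sigma_3(G_2) = \binom{n}{3} - \tfrac{n-1}{2}(n-2)$ since a $3$-set fails to be a triangle exactly when it meets a matching edge and no $3$-set meets two; by Pascal's rule both yield $a_3 = \binom{n-1}{2}$. Finally $|E(G_1)| = \binom{n-1}{2}+1$ and $|E(G_2)| = \binom{n}{2} - \tfrac{n-1}{2}$ differ by $\tfrac{n-3}{2}$, which is positive for $n \geq 5$.

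For bipartiteness and circumference I would rely on the two graphs of Figure~\ref{Fig:BipartiteEncode}. The verification that their polynomials agree is a finite enumeration: for each graph I would count the acyclic induced subgraphs of each cardinality from $0$ up to the acyclic dimension $5$ and confirm that both produce $1 + 7x + 21x^2 + 35x^3 + 32x^4 + 12x^5$. The differing invariants are then immediate from inspection, since one graph admits a proper $2$-colouring while the other contains an odd cycle, and one is Hamiltonian while the other is not; this single pair settles both the bipartiteness and the circumference assertions at once.

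The main obstacle is exactly this enumeration in the figure example: unlike the edge-count pair, whose coefficients follow from clean closed forms, confirming that the two order-$7$ graphs agree in all of $a_3,a_4,a_5$ requires an honest count of acyclic $3$-, $4$- and $5$-subsets (equivalently, subtracting the number of subsets inducing at least one cycle), with particular care at $a_5$, which also certifies that the acyclic dimension is exactly $5$. Once those counts are checked, the remaining structural claims about colourability, odd cycles and Hamiltonicity are routine.
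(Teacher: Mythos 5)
Your proposal is correct and follows essentially the same route as the paper: the theorem is proved by exhibiting the same counterexample pairs (the pendant-vertex graph versus $K_n$ minus a maximum matching for edge count, and the two order-$7$ graphs of Figure~\ref{Fig:BipartiteEncode} for both bipartiteness and circumference). Your triangle-counting verification that both $G_1$ and $G_2$ have $a_3=\binom{n-1}{2}$ and acyclic dimension $3$, and your computation that the edge counts differ by $\tfrac{n-3}{2}$, correctly fill in details the paper asserts without proof.
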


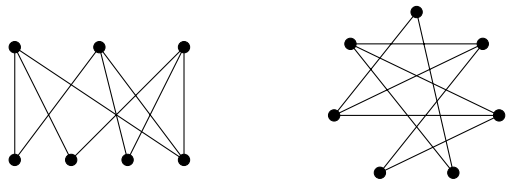
\begin{figure}[htbp]
\begin{center}

\begin{tikzpicture}[scale=.75]
\draw [-, black] (1.5,2) -- (0,0) -- (0,2) -- (1,0) -- (3,2) -- (3,0) -- (1.5,2) -- (2,0) -- (3,2);
\draw [-, black] (0,2) -- (3,0);

\draw [black,fill] (0,0) circle [radius=0.1];
\draw [black,fill] (1,0) circle [radius=0.1];
\draw [black,fill] (2,0) circle [radius=0.1];
\draw [black,fill] (3,0) circle [radius=0.1];

\draw [black,fill] (0,2) circle [radius=0.1];
\draw [black,fill] (1.5,2) circle [radius=0.1];
\draw [black,fill] (3,2) circle [radius=0.1];

% The circle below is just to adjust the vertical position of the figure
\draw [white,fill] (0,-0.4) circle [radius=0.001];

%\draw [black, ultra thick] (2,4) circle (0.5) [fill=orange];
\end{tikzpicture}
\hspace*{15mm}
\begin{tikzpicture}[scale=.75]
%  r := 2.0; for i from 0 to 6 do theta := (1/2)*Pi+i*(2*Pi*(1/7)); x := r*cos(theta); y := r*sin(theta); print(evalf(x), evalf(y)) end do

\iffalse
\draw [black,fill](                            0., 2.0) circle [radius=0.1];
\draw [black,fill](                   -1.563662965, 1.246979604) circle [radius=0.1];
\draw [black,fill](                  -1.949855824, -0.4450418680) circle [radius=0.1];
\draw [black,fill](                  -0.8677674788, -1.801937736) circle [radius=0.1];
\draw [black,fill](                   0.8677674788, -1.801937736) circle [radius=0.1];
\draw [black,fill](                   1.949855824, -0.4450418680) circle [radius=0.1];
\draw [black,fill](                    1.563662965, 1.246979604) circle [radius=0.1];
\fi

\draw [black,fill](                            0., 1.5) circle [radius=0.1] node(A) {};
\draw [black,fill](                   -1.172747224, 0.9352347030) circle [radius=0.1] node(G) {};
\draw [black,fill](                  -1.462391868, -0.3337814010) circle [radius=0.1] node(F) {};
\draw [black,fill](                  -0.6508256091, -1.351453302) circle [radius=0.1] node(E) {};
\draw [black,fill](                   0.6508256091, -1.351453302) circle [radius=0.1] node(D) {};
\draw [black,fill](                   1.462391868, -0.3337814010) circle [radius=0.1] node(C) {};
\draw [black,fill](                   1.172747224, 0.9352347030) circle [radius=0.1] node(B) {};

\draw [-, black] (A.center) -- (D.center) -- (G.center) -- (B.center) -- (F.center) -- (A.center);
\draw [-, black] (B.center) -- (E.center) -- (C.center) -- (F.center);
\draw [-, black] (G.center) -- (C.center);

%\draw [black, ultra thick] (2,4) circle (0.5) [fill=orange];
\end{tikzpicture}

\caption{A bipartite graph and a non-bipartite graph with identical acyclic polynomials}
\label{Fig:BipartiteEncode}
\end{center}
\end{figure}

\subsection{The Complexity of Calculating Acyclic Polynomials}

For some families of graphs, the entire acyclic polynomial can be determined fairly easily.
%The acyclic polynomial for certain graph classes can be easily determined.
For instance, for complete graphs we have $\AC(K_n,x) = 1 + nx + \binom{n}{2}x^2$
and for cycles we have $\AC(C_n,x) = \sum_{i=0}^{n-1} \binom{n}{i}x^i = (1+x)^n - x^n$.
As a more interesting example, {\it cographs} are those graphs that can be built recursively from a single vertex via disjoint union and join operations.
Equivalently, cographs can be characterized as those that do not contain any path $P_4$ of order 4 as an induced subgraph~\cite{CLSB1981}.
Given two graphs $G$ and $H$, we denote their disjoint union as $G \cup H$
and their join as $G + H$
(the {\it join} of two graphs $G$ and $H$ is formed from their disjoint union by adding in all edges between a vertex of $G$ and a vertex of $H$).
%Additionally,
%%To introduce some notation that we will frequently use,
%by $[x^n]\, f(x)$ we denote the coefficient of the $x^n$ term of the polynomial $f(x)$
%so that $a_i = [x^i]\, \AC(G,x)$.
%%For general graph theoretic notation and terminology, we follow~\cite{West}.
It is elementary that
\begin{equation}\label{Eq0}
\AC(G \cup H,x) = \AC(G,x) \cdot \AC(H,x)
\end{equation}
as the union of acyclic vertex sets from disjoint graphs $G$ and $H$ is acyclic in $G \cup H$.
The effect of the join operation is more subtle, and it involves one other graph polynomial. For any graph $G$, let $I(G,x)$ be the independence polynomial for $G$, that is, the generating function of the {\it independent sets} of $G$ (a set of vertices is {\it independent} if it contains no edge). Independence polynomials have been well studied (see, for example, \cite{GutmanHarary1983,LevitMandrescu2005}). Their behaviour under disjoint  union and join is quite straightforward:
\begin{equation}
\label{Eq1}
I(G \cup H,x)=(I(G,x)) \cdot (I(H,x))
\end{equation}
and
\begin{equation}
\label{Eq2}
I(G + H,x) = I(G,x) + I(H,x) -1.
\end{equation}

We now address how to calculate the acyclic polynomial of the join of two graphs from
the acyclic and independence polynomials of each of the two graphs being joined.

\begin{Theorem}
\label{Thm-join}
For any graphs $G$ and $H$, of orders $n_G$ and $n_H$ respectively,
\begin{eqnarray*}
\AC(G + H,x) & = &
\AC(G,x) + \AC(H,x) +
n_G x I(H,x)
+
n_H x I(G,x)\\
 & &
+\left( {n \choose 2} -2 n_G n_H - {{n_G} \choose 2} - {{n_H} \choose 2}\right)x^2
- nx -1,
\end{eqnarray*}
where $n = n_G + n_H$.
\end{Theorem}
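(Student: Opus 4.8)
The plan is to classify each vertex subset $S \subseteq V(G+H)$ according to the pair $(|S_G|, |S_H|)$, where $S_G = S \cap V(G)$ and $S_H = S \cap V(H)$, and to determine in each case exactly when $(G+H)[S]$ is acyclic. Since every vertex of $S_G$ is adjacent in $G+H$ to every vertex of $S_H$, two structural observations drive the whole argument. First, if $|S_G| \geq 2$ and $|S_H| \geq 2$, then choosing distinct $u_1, u_2 \in S_G$ and $v_1, v_2 \in S_H$ yields the $4$-cycle $u_1 v_1 u_2 v_2 u_1$, so $S$ is never acyclic in this regime. Second, if $S_G = \{u\}$ is a singleton, then $(G+H)[S]$ is the \emph{cone} over $H[S_H]$ obtained by joining $u$ to all of $S_H$; a cycle of this cone either lies wholly in $H[S_H]$ or passes through $u$ and so uses an edge of $H[S_H]$, while conversely any single edge of $H[S_H]$ closes a triangle through $u$. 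Hence the cone is acyclic precisely when $S_H$ is independent in $H$, and symmetrically with the roles reversed when $S_H$ is a singleton.

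Next I would assemble the generating function by summing over the surviving types, taking care at the boundary cases where the types overlap. The subsets with $S_H = \emptyset$ contribute $\AC(G,x)$ and those with $S_G = \emptyset$ contribute $\AC(H,x)$; as both families include $S = \emptyset$, one copy of the constant term must be removed. The subsets with $|S_G| = 1$ and $S_H$ a nonempty independent set of $H$ contribute $n_G x\,(I(H,x)-1)$, and symmetrically those with $|S_H| = 1$ and $S_G$ a nonempty independent set of $G$ contribute $n_H x\,(I(G,x)-1)$; these two families both count the configurations with $|S_G| = |S_H| = 1$, of which there are exactly $n_G n_H$, so a correction of $-n_G n_H x^2$ removes the overcount. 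Every remaining type (both parts of size at least $2$, or one part a singleton over a dependent set) contributes nothing.

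Collecting these pieces via inclusion--exclusion over the four overlapping families gives
$$\AC(G+H,x) = \AC(G,x) + \AC(H,x) + n_G x\,I(H,x) + n_H x\,I(G,x) - n_G n_H x^2 - nx - 1,$$
and it only remains to recognize the stated $x^2$-coefficient. This reconciliation is the one step that is more than bookkeeping: by the Vandermonde-type identity $\binom{n}{2} = \binom{n_G}{2} + \binom{n_H}{2} + n_G n_H$ one obtains $\binom{n}{2} - 2 n_G n_H - \binom{n_G}{2} - \binom{n_H}{2} = -n_G n_H$, matching the coefficient above. The hard part of the argument is therefore not any individual computation but the disciplined handling of the two overlaps ($S = \emptyset$ and $|S_G| = |S_H| = 1$); getting both corrections right is exactly what converts the naive sum of contributions into the claimed closed form.
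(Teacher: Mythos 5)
Your proof is correct and follows essentially the same route as the paper: both arguments rest on the observation that an acyclic $S$ must have $\min\{|S_G|,|S_H|\}\leq 1$ and then classify $S$ into the same four types (one side empty and the other acyclic, or one side a singleton and the other independent). The only difference is bookkeeping --- the paper sidesteps the overlaps by working coefficient-by-coefficient for $i\geq 3$, where the four types are disjoint, and then restoring the known terms $1+nx+\binom{n}{2}x^2$, whereas you handle the two overlaps ($S=\emptyset$ and $|S_G|=|S_H|=1$) by explicit inclusion--exclusion and then simplify the $x^2$-coefficient via $\binom{n}{2}=\binom{n_G}{2}+\binom{n_H}{2}+n_G n_H$; the two accountings are equivalent.
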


\begin{Proof}
\iffalse
We introduce some notation that we will frequently use:
by $[x^n]\, f(x)$ we denote the coefficient of the $x^n$ term of the polynomial $f(x)$
(so that $a_i = [x^i]\, \AC(G,x)$).
\fi
Suppose $S \subseteq V(G + H)$ induces an acyclic subgraph of $G + H$.
Let $S_G = S \cap V(G)$ and $S_H = S \cap V(H)$.
Necessarily $\min \{|S_G|,|S_H|\} \leq 1$ for otherwise the subgraph of $G+H$ induced by $S$ is not acyclic.
Hence either
\begin{enumerate}[(i)]
\item $|S_G| = 0$ and $S_H$ induces an acyclic subgraph of $H$, or
\item $|S_G| = 1$ and $S_H$ consists of an independent set in $H$, or
\item $|S_H| = 0$ and $S_G$ induces an acyclic subgraph of $G$, or
\item $|S_H| = 1$ and $S_G$ consists of an independent set in $G$.
\end{enumerate}

By using the notation $[x^t]\, f(x)$ to denote the coefficient of the $x^t$ term of the polynomial $f(x)$,
then
\[[x^i]\, \AC(G + H,x) = [x^i]\, \AC(H,x) + |V(G)| [x^{i-1}]\, I(H,x) + [x^i]\, \AC(G,x) + |V(H)| [x^{i-1}]\, I(G,x)\]
when $i \geq 3$.  By summing over all $i \geq 3$, it follows that

\begin{eqnarray*}
\AC(G+H,x) - {n \choose 2}x^2-nx-1 & = & \left( \AC(H,x) - {{n_H} \choose 2}x^2-n_{H}x-1 \right)  \\
 & & {}+ n_G x \left( I(H,x) - n_H x - 1 \right)  \\
  & & {}+ \left( \AC(G,x) - {{n_G} \choose 2}x^2-n_{G}x-1 \right)  \\
   & & {}+ n_H x \left( I(G,x) - n_G x - 1 \right).
\end{eqnarray*}
Therefore
\begin{eqnarray*}
\AC(G+H,x) & = & \AC(G,x) + \AC(H,x) +
n_G x I(H,x)
+ n_H x I(G,x) +\\ & &
\left( {n \choose 2} -2 n_G n_H - {{n_G} \choose 2} - {{n_H} \choose 2}\right)x^2
- nx -1.
\end{eqnarray*}

%When $i=2$, two of the four scenarios (namely the second and the fourth) coincide.
%Thus $[x^2]\, \AC(G + H,x) =
%[x^2]\, \AC(H,x)
%+
%|V(G)| \cdot |V(H)|
%+
%[x^2]\, \AC(G,x)
%$.
%
%When $i=1$, the first and fourth (resp.~second and third) scenarios coincide, and so
%$[x^1]\, \AC(G + H,x) =
%[x^1]\, \AC(G,x) + [x^1]\, \AC(H,x)
%= [x^1]\, I(G,x) + [x^1]\, I(H,x)
%= |V(G)| + |V(H)|
%$.
%
%Therefore:
%{\small
%$$
%\begin{array}{rcl}
%\AC(G + H,x) & = & 1 + (|V(G)| + |V(H)|)x
%+ \big( \binom{|V(G)|}{2} + |V(G)||V(H)| + \binom{|V(H)|}{2} \big) x^2 \\
%&&
%+ \sum_{i \geq 3} \big(
%[x^i]\, \AC(H,x)
%+
%|V(G)| [x^{i-1}]\, I(H,x)
%+
%[x^i]\, \AC(G,x)
%\\
%&&
%+
%|V(H)| [x^{i-1}]\, I(G,x)
%\big) x^i \\
%
%&=&
%\AC(G,x) + \AC(H,x) - 1 +  |V(G)||V(H)|x^2
%\\
%&&
%+ \sum_{i \geq 3} \big(
%|V(G)| [x^{i-1}]\, I(H,x)
%+
%|V(H)| [x^{i-1}]\, I(G,x)
%\big) x^i \\
%
%
%&=&
%\AC(G,x) + \AC(H,x) - 1 +  |V(G)||V(H)|x^2
%+
%|V(G)| I(H,x)
%+
%|V(H)| I(G,x)
%\\
%&&
%- \sum_{i = 1}^2 \big(
%|V(G)| [x^{i-1}]\, I(H,x)
%+
%|V(H)| [x^{i-1}]\, I(G,x)
%\big) x^i \\
%
%
%&=&
%\AC(G,x) + \AC(H,x) - 1 +  |V(G)||V(H)|x^2
%+
%|V(G)| I(H,x) x
%+
%|V(H)| I(G,x) x
%\\
%&&
%- (|V(G) + |V(H)|)x - 2|V(G)||V(H)|x^2
%\\
%
%\end{array}
%$$
%}
\end{Proof}

%
%
%As will be later proved (see Corollary~\ref{Cor:ACandIndep})
%the independence polynomial $I(G,x)$ of a graph $G$ can be determined from acyclic polynomials for the graphs $G$ and $G \vee K_1$.
%And since it is known that evaluating $I(G,t)$ is {\sharpP}-hard for each $t \in \Complex \setminus \{0\}$~\cite{BrownHoshino2009}, it follows that
%it is intractable to determine the value of $\AC(G,t)$ for an arbitrary graph $G$ and nonzero $t$.
%

It was observed by one of our referees that the acyclic polynomial of a graph 
can be expressed in monadic second-order logic (MSOL).
% (see~\cite{CEbook} for a review of monadic second-order logic).
Specifically, note that 
$$ \AC(G,x) = \sum_{A \subseteq V(G) \,:\, (V(G),E(G),A) \models \varphi(A)}  x^{|A|}$$
where 
$\varphi(A)$ is an MSOL expression indicating that $A$ is an acyclic set in $G$.
Equivalently, $\varphi(A)$ indicates that the subgraph of $G$ induced by $A$ has no $K_3$-minor.
Section~1.3 of \cite{CEbook} shows how to formulate a logical expression to indicate that a graph has no $H$-minor,
where $H$ is any fixed simple loopless graph.
As a consequence of being able to express $\AC(G,x)$ in monadic second-order logic, 
it follows by a theorem due to Courcelle, Makowsky and Rotics~\cite{makowskylogic} 
that the evaluation of $\AC(G,x)$ for graphs of bounded clique-width is fixed parameter tractable.
In the case of cographs (which have clique-width at most $2$) we are able to do much better.
%find that determining the acyclic polynomial can be accomplished in linear time.

\begin{Theorem}
\label{Thm:Cographs}
If $G$ is a cograph, then $\AC(G,x)$ can be calculated in linear time.
\end{Theorem}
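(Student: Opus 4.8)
The plan is to exploit the recursive structure of cographs through their canonical tree representation (the \emph{cotree}), together with the behaviour of $\AC$ and $I$ under disjoint union and join. It is well known that a cograph $G$ on $n$ vertices can be recognized and its cotree constructed in time linear in the size of $G$~\cite{CLSB1981}; the leaves of the cotree are the vertices of $G$, and each internal node is labelled either $\cup$ (disjoint union) or $+$ (join), with the two labels alternating along every root-to-leaf path. I would compute $\AC(G,x)$ by a single bottom-up pass over this cotree, storing at each node $v$ the triple $(n_v,\, \AC(G_v,x),\, I(G_v,x))$, where $G_v$ is the cograph induced by the leaves below $v$.

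The base case is a leaf, for which $(n_v,\AC,I) = (1,\,1+x,\,1+x)$. At a union node whose children induce $G$ and $H$, equations \eqref{Eq0} and \eqref{Eq1} give $\AC(G\cup H,x)=\AC(G,x)\,\AC(H,x)$ and $I(G\cup H,x)=I(G,x)\,I(H,x)$, while $n_v=n_G+n_H$. At a join node, Theorem~\ref{Thm-join} expresses $\AC(G+H,x)$ in terms of $\AC(G,x)$, $\AC(H,x)$, $I(G,x)$, $I(H,x)$ and the two orders, and equation \eqref{Eq2} updates the independence polynomial; an internal node with more than two children is handled by applying these binary rules repeatedly, using associativity of $\cup$ and $+$. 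The key observation that makes the recursion self-contained is that the join rule for the acyclic polynomial calls for the independence polynomials of both parts, so $I$ must be carried along in tandem with $\AC$ — neither can be recovered from the other alone. Because the independence polynomial of a cograph obeys the parallel pair of rules \eqref{Eq1}–\eqref{Eq2}, both quantities do propagate up the cotree together, and at the root we read off $\AC(G,x)$.

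For the running time I would observe that the canonical cotree has $O(n)$ nodes and $O(n)$ edges, and that each internal node performs only a constant number of polynomial additions and multiplications per child, so the algorithm executes $O(n)$ polynomial arithmetic operations in total. The step I expect to be the real obstacle is the complexity bookkeeping rather than the recursion itself: an intermediate acyclic polynomial $\AC(G_v,x)$ can have degree $\MF(G_v)=\Theta(n)$ (for instance $\AC(K_{1,n},x)=(1+x)^{n+1}$), so I must be explicit about the model in which ``linear'' is claimed, namely a linear number of operations at the level of polynomial arithmetic, and must verify that the alternation of $\cup$ and $+$ nodes forces the total number of such combining steps to remain linear. Once the model is fixed and the cotree's linear size is invoked, the bound follows; the conceptual content lies entirely in tracking the pair $(\AC,I)$ jointly and in the union and join formulas already established.
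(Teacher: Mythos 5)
Your proposal is correct and follows essentially the same route as the paper: linear-time cotree construction, then a bottom-up pass maintaining the pair $(\AC, I)$ and combining via Equations~(\ref{Eq0})--(\ref{Eq2}) and Theorem~\ref{Thm-join}. The only substantive difference is that you are more careful than the paper about the cost model (the paper simply asserts each combining step takes constant time, implicitly treating polynomial arithmetic operations as atomic, which is exactly the caveat you flag).
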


\begin{Proof}
Recall that cographs are graphs that can be constructed through a collection of disjoint union and join operations.
Recognizing that a graph is a cograph and determining the sequence of operations that comprise its construction
can be accomplished in linear time~\cite{CPS1985}.
For a cograph of order $n$, this sequence consists of $n-1$
%disjoint union or join
operations.
For each disjoint union operation, the acyclic and independence polynomials of the graph arising from the operation can be
calculated from the polynomials of the two ingredient graphs by using Equations~(\ref{Eq0}) and~(\ref{Eq1}).
For each join operation, the acyclic and independence polynomials of the graph arising from the operation can be
calculated by using Equation~(\ref{Eq2}) and Theorem~\ref{Thm-join}.
For either operation these calculations take constant time.
\end{Proof}

\iffalse
We remark that the acyclic polynomial is definable in \textit{monadic second-order logic}, and hence, by a theorem due to Courcelle, Makowsky and Rotics~\cite{makowskylogic}, the evaluation of $\AC(G,x)$ on graphs of bounded clique-width is fixed parameter tractable.
Since cographs have clique-width at most $2$,
Theorem~\ref{Thm:Cographs} provides a better bound for the evaluation of $\AC(G,x)$ when $G$ is a cograph.
\fi

Since it is, in general, \NP-hard to determine the decycling number $\nabla(G)$ of a graph $G$,
it is likewise intractable to determine the acyclic polynomial $\AC(G,x)$ for a general graph.
However, we can ask whether the task of evaluating the acyclic polynomial for certain choices of $x$ (without actually determining the polynomial itself)
can be performed efficiently.
This type of question has been investigated for other graph polynomials; for the chromatic polynomial see~\cite{Goodall2018,Linial1986},
and for the independence polynomial see~\cite{BrownHoshino2009}.
For us to proceed, observe that
Theorem~\ref{Thm-join} enables
the following interesting and important connection between acyclic and independence polynomials for graphs to be derived.

\begin{Corollary}
For any graph $G$, $\AC(G + K_1, x) = \AC(G,x) + x I(G,x)$.
\label{Cor:AC_AC_I}
\end{Corollary}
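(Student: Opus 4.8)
The plan is to specialize Theorem~\ref{Thm-join} to the case $H = K_1$. First I would record the two ingredient polynomials for the single vertex: since both the empty set and the single vertex itself induce acyclic (indeed independent) subgraphs, we have $\AC(K_1,x) = 1 + x$ and $I(K_1,x) = 1 + x$. I would also note that here $n_H = 1$ and hence $n = n_G + 1$.

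Next I would substitute these values directly into the formula of Theorem~\ref{Thm-join}. The two mixed terms become $n_G x\, I(K_1,x) = n_G x (1+x)$ and $n_H x\, I(G,x) = x\, I(G,x)$, the latter being exactly the term we want to survive. The remaining bookkeeping contributions are $\AC(K_1,x) = 1+x$, the quadratic coefficient $\binom{n}{2} - 2 n_G n_H - \binom{n_G}{2} - \binom{n_H}{2}$, and $-nx - 1$.

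The main (and essentially only) computation is the algebraic simplification. The key identity is $\binom{n_G+1}{2} - \binom{n_G}{2} = n_G$, which together with $\binom{1}{2} = 0$ shows that the quadratic coefficient collapses to $-n_G$. I would then collect terms by degree and verify that the constant terms ($1 - 1$), the linear terms ($x + n_G x - (n_G+1)x$), and the quadratic terms ($n_G x^2 - n_G x^2$) each cancel, leaving precisely $\AC(G,x) + x\, I(G,x)$.

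There is no genuine obstacle here: the statement is a direct corollary obtained by substitution, so the only care required is to track the coefficients of $1$, $x$ and $x^2$ accurately through the cancellation.
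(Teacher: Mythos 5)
Your proposal is correct and is exactly the paper's intended argument: the Corollary is stated as a direct consequence of Theorem~\ref{Thm-join}, obtained by setting $H = K_1$ (so $n_H = 1$, $\AC(K_1,x) = I(K_1,x) = 1+x$) and simplifying. Your bookkeeping checks out, including the key collapse $\binom{n_G+1}{2} - 2n_G - \binom{n_G}{2} = -n_G$ and the cancellation of the constant, linear and quadratic terms.
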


In particular note that an oracle for determining (or evaluating) acyclic polynomials therefore enables
the calculation (or evaluation) of independence polynomials.
It now follows that the only acyclic polynomial evaluation that is tractable is for $x = 0$,
for which $\AC(G,0)=1$ for every graph $G$.

%It now follows that the only evaluation that is tractable is for $x = 0$, for which $\AC(G,0)=1$ for every graph $G$, as the following discussion shows.

\begin{Theorem}
Evaluating the acyclic polynomial for an arbitrary graph $G$ and nonzero $x$ is intractable.
%\label{Cor:ACandIndep}
\end{Theorem}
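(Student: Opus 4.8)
The plan is to turn the relationship recorded in Corollary~\ref{Cor:AC_AC_I} into a Turing reduction: an efficient evaluator for acyclic polynomials at a nonzero point would yield an efficient evaluator for independence polynomials at the same point, and the latter is already known to be intractable. Thus the whole argument rests on isolating $I(G,x)$ from the identity $\AC(G + K_1,x) = \AC(G,x) + x I(G,x)$, which is exactly the ``interesting and important connection'' flagged in the text preceding the statement.

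First I would fix an arbitrary nonzero value $\alpha$ and suppose, toward a contradiction, that there is a polynomial-time procedure which, on input a graph $G$, returns $\AC(G,\alpha)$. Given any graph $G$, the augmented graph $G + K_1$ is obtainable in linear time (add one vertex and join it to all of $V(G)$), so the hypothetical procedure also computes $\AC(G + K_1,\alpha)$ efficiently. Rearranging Corollary~\ref{Cor:AC_AC_I} then gives
$$I(G,\alpha) = \frac{\AC(G + K_1,\alpha) - \AC(G,\alpha)}{\alpha},$$
and the right-hand side is well-defined precisely because $\alpha \neq 0$. Hence two oracle calls followed by a single subtraction and a single division recover $I(G,\alpha)$ in polynomial time, establishing that evaluating $\AC(\cdot,\alpha)$ is at least as hard as evaluating $I(\cdot,\alpha)$.

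To finish, I would invoke the known intractability of evaluating independence polynomials at nonzero points (see \cite{BrownHoshino2009}): since the reduction above transfers that hardness to the acyclic polynomial at the same value $\alpha$, no efficient evaluator for $\AC(\cdot,\alpha)$ can exist, which together with the already-noted triviality of the case $x=0$ gives the stated dichotomy.

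The main obstacle is not the algebra of the reduction, which is immediate, but ensuring that the cited hardness statement for the independence polynomial is \emph{uniform} over all nonzero $\alpha$ rather than holding only at isolated points such as $\alpha = 1$, where $I(G,1)$ counts all independent sets and is classically \sharpP-complete. Once that uniform hardness is in hand the remaining care is purely routine: verifying that constructing $G + K_1$ and performing the arithmetic are polynomial-time steps, so that the reduction is genuinely efficient.
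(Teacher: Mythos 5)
Your proposal is correct and is essentially the paper's own argument: the paper likewise reduces evaluation of $I(G,x)$ to evaluation of $\AC(\cdot,x)$ via Corollary~\ref{Cor:AC_AC_I} and then invokes the result of \cite{BrownHoshino2009}. The uniformity concern you raise is already settled by that citation, which states \sharpP-hardness of evaluating $I(G,x)$ for \emph{each} $x \in \Complex \setminus \{0\}$, not merely at isolated points.
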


\begin{Proof}
It is known that evaluating $I(G,x)$ is {\sharpP}-hard for each $x \in \Complex \setminus \{0\}$~\cite{BrownHoshino2009}.
It therefore follows from Corollary~\ref{Cor:AC_AC_I} that evaluating the acyclic polynomial
for an arbitrary graph $G$ and nonzero $x$ is also intractable.
\end{Proof}

It follows immediately that determining $\AC(G,1)$, the total number of induced forests of a graph $G$, is {\sharpP}-hard.

\subsection{The Acyclic Polynomial is \underline{not} an Evaluation of the Tutte Polynomial}
\label{Sec:NotTutte}
If one concerns oneself with acyclic {\it edge} sets rather than vertex sets, then the corresponding complex is in fact a well known matroid,
the {\it graphic matroid} of graph $G$, and its $f$-polynomial is a simple evaluation of $G$'s {\it Tutte polynomial}.
The acyclic polynomials we propose here do not arise as evaluations of Tutte polynomials, 
as can be seen by the following argument 
(which was provided to us by an anonymous referee).

The \textit{most general edge elimination invariant} (the $\xi$ {\em polynomial}) was introduced in \cite{agm08,agm10} as a generalization of the Tutte and matching polynomials. In the definition below, $-e$, $/e$ and $\dagger e$ denote the edge deletion, contraction and extraction of $e$ from $G$ (the \textit{extraction} is the graph formed from $G$ by removing the endpoints of $e$ and all incident edges), and $G \cup H$ is the disjoint union of $G$ and $H$.

\begin{Definition}
Let $F$ be a graph parameter with values in a ring ${\mathcal R}$. $F$ is an EE-invariant if there exist $\alpha,\beta,\gamma \in {\mathcal R}$ such that
\[ F(G) = F(G_{-e}) + \alpha F(G_{/e}) + \beta F(G_{\dagger e}),\]
where $e \in E(G)$, with the additional conditions that $F(\emptyset)=1$, $F(K_{1}) = \gamma$ and $F(G \cup H) = F(G)\cdot F(H)$.
\end{Definition}

Let $\xi(G;x,y,z)$ be the graph polynomial defined by
\[ \xi(G;x,y,z) = \sum_{A,B \, \subseteq \, E(G)} x^{c(A \cup B) -\mbox{cov}(B)}y^{|A| + |B| +c(A \cup B) -\mbox{cov}(B)}z^{\mbox{cov}(B)},\]
where the summation is over all subsets $A$ and $B$ of $E(G)$ such that the vertex subsets $V(A)$ and $V(B)$ covered by $A$ and $B$, respectively, are disjoint, $c(A)$ is the number of connected components in $(V(G),A)$, and $\mbox{cov}(B)$ is the number of connected components of $(V(B),B)$.

\begin{Theorem}{\cite{agm08}}
Let $G$ be a graph. Then
\begin{enumerate}[(i)]
\item $\xi(G;x,y,z)$ is an EE-invariant.
\item Every EE-invariant is a substitution instance of $\xi(G;x,y,z)$ multiplied by some factor $s(G)$ which only depends on the number of vertices, edges and connected components of $G$.
\item Both the matching polynomial and the Tutte polynomial are EE-invariants given by
\[ T(G;x,y) = (x-1)^{-c(E(G))}(y-1)^{-|V(G)|}\xi(G;(x-1)(y-1),y-1,0),\]
and
\[ M(G;w_1,w_2) = \xi(G;w_1,0,w_2).\]
\end{enumerate}
\end{Theorem}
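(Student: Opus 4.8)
The plan is to treat the three parts in turn, with essentially all of the combinatorial work concentrated in part~(i); parts~(ii) and~(iii) then follow from it by bookkeeping and small-case verification.

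For part~(i) I would verify the four defining conditions of an EE-invariant directly from the state-sum definition of $\xi$. The conditions $\xi(\emptyset)=1$ and $\xi(K_1)=xy$ (so that $\gamma=xy$) follow by inspecting the unique admissible pair $(A,B)=(\emptyset,\emptyset)$ in each case and reading off the exponents $c(A\cup B)-\mbox{cov}(B)$, $|A|+|B|+c(A\cup B)-\mbox{cov}(B)$ and $\mbox{cov}(B)$. Multiplicativity $\xi(G\cup H)=\xi(G)\,\xi(H)$ follows because each admissible pair $(A,B)$ for $G\cup H$ splits uniquely into admissible pairs for $G$ and for $H$, and each of $|A|$, $|B|$, $c(A\cup B)$ and $\mbox{cov}(B)$ is additive across the two components, so the weight factors. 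The substantive step is the edge-elimination recurrence. Here I would fix $e\in E(G)$ and partition the admissible pairs $(A,B)$ for $G$ according to whether $e\notin A\cup B$, $e\in A$, or $e\in B$; the constraint $V(A)\cap V(B)=\emptyset$ forbids $e\in A\cap B$. The first class is in weight-preserving bijection with the admissible pairs for $G_{-e}$, contributing $\xi(G_{-e})$ with coefficient $1$. For the other two classes I would track how each exponent changes when $e$ is removed from $A$ and its endpoints are contracted, respectively removed from $B$ and its endpoints extracted, and show the resulting weights are those of $G_{/e}$ and $G_{\dagger e}$ up to fixed monomial factors. The single computation $\xi(K_2)=x^2y^2+xy^2+yz$ against $\xi(2K_1)=x^2y^2$, $\xi(K_1)=xy$ and $\xi(\emptyset)=1$ already pins these factors down as $\alpha=y$ and $\beta=yz$. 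I expect this exponent bookkeeping to be the main obstacle, and in particular confirming that $c(A\cup B)$ and $\mbox{cov}(B)$ transform correctly under the extraction $\dagger e$, which is the least familiar of the three operations.

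For part~(ii) I would argue by induction on $|E(G)|$ that any EE-invariant $F$ is determined by its parameters $\alpha,\beta,\gamma$. The base case is edgeless graphs, where multiplicativity forces $F(G)=\gamma^{|V(G)|}$; the recurrence then reduces any graph with an edge to graphs with strictly fewer edges. Since $\xi$ is itself an EE-invariant by part~(i), the universality statement reduces to exhibiting a substitution of the three parameters into the variables of $\xi$, together with a normalizing prefactor $s(G)$, that reproduces $F$ on edgeless graphs and is compatible with the recurrence. Matching coefficients on $K_1$, on $K_2$, and under disjoint union determines both the substitution and the form of $s(G)$, which by construction can only depend on $|V(G)|$, $|E(G)|$ and $c(E(G))$.

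For part~(iii) the cleanest route is to invoke part~(ii). Both target polynomials are readily checked to be EE-invariants: the matching polynomial satisfies the recurrence with no contraction term (so $\alpha=0$, matching the substitution $y\mapsto 0$), while the Tutte polynomial satisfies the familiar deletion--contraction recurrence with no extraction term (so $\beta=yz=0$, matching the substitution $z\mapsto 0$). Each is therefore a substitution instance of $\xi$ times a prefactor $s(G)$. It remains to identify that substitution and prefactor explicitly and confirm the two displayed formulas; I would check these either by comparing the induced recurrences and initial conditions or, more concretely, by expanding both sides on small graphs and extending by the shared recurrence.
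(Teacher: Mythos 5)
First, a framing point: the paper does not prove this theorem at all --- it is quoted, with citation, from \cite{agm08} as background for the argument that the acyclic polynomial is not an EE-invariant. So there is no internal proof to compare your attempt against, and your proposal has to be judged against what a correct proof of the cited result must do.

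Your parts (ii) and (iii) follow the standard route (induction on $|E(G)|$ for uniqueness, then verification of the two specializations), and your computed values $\xi(\emptyset)=1$, $\xi(K_1)=xy$, $\xi(K_2)=x^2y^2+xy^2+yz$, hence $\gamma=xy$, $\alpha=y$, $\beta=yz$, are all correct for the definition as printed. But part (i), which you rightly identify as carrying all the weight, contains a genuine gap: the three-way partition by $e\notin A\cup B$, $e\in A$, $e\in B$ does \emph{not} give class-by-class bijections with the admissible pairs of $G_{-e}$, $G_{/e}$ and $G_{\dagger e}$. The problem is the class $e\in B$: extraction deletes both endpoints of $e$ together with \emph{all} incident edges, so a pair $(A,B)$ in which some other edge of $B$ meets an endpoint of $e$ has no counterpart in $G_{\dagger e}$. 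Concretely, take $G=P_3$ with edges $e=uv$, $f=vw$, and the pair $(A,B)=(\emptyset,\{e,f\})$: its weight is $y^2z$, whereas $\beta\,\xi(G_{\dagger e})=yz\cdot xy=xy^2z$ accounts only for the pair $(\emptyset,\{e\})$; the pair $(\emptyset,\{e,f\})$ is instead matched by the term $y\cdot yz$ inside $\alpha\,\xi(G_{/e})$. The correct decomposition routes to $G_{\dagger e}$ only those pairs in which $e$ is an \emph{isolated edge} of $(V(B),B)$ (then $|B|$, $c(A\cup B)$ and $\mathrm{cov}(B)$ each drop by one, producing the factor $yz$), while pairs with $e\in B$ lying in a larger $B$-component must be folded into the contraction term together with the $e\in A$ pairs (in $G_{/e}$ they correspond to pairs whose $B'$ covers the merged vertex; $|B|$ drops by one and the other statistics are unchanged, producing the factor $y$). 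So the step you flagged as the ``least familiar'' is exactly where your plan as written breaks, and the repair is a finer case split, not merely exponent bookkeeping.

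One caveat about your plan to confirm part (iii) ``by expanding both sides on small graphs'': the statement as printed in the paper is not internally consistent, so that check would fail for a reason unrelated to your strategy. With the displayed convention $y^{|A|+|B|+c(A\cup B)-\mathrm{cov}(B)}$ one gets, already for $K_2$, that $(x-1)^{-1}(y-1)^{-2}\,\xi\bigl(K_2;(x-1)(y-1),y-1,0\bigr)=(x-1)(y-1)^2+(y-1)\neq x=T(K_2;x,y)$; moreover $\xi(G;w_1,0,w_2)$ vanishes identically for every nonempty $G$, since the disjointness constraint forces each component of $(V(B),B)$ to be a full component of $(V(G),A\cup B)$, whence the $y$-exponent of every admissible pair is strictly positive. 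The exponent convention in the original source \cite{agm08} differs from the one transcribed here, and under it both displayed specializations do hold; you should be aware that the quoted subset expansion contains a typo before using it as the basis for small-case verification.
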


\begin{Theorem}
The acyclic polynomial $\AC(G,x)$ is not an EE-invariant, and hence not an evaluation ({\it i.e.}, substitution instance) of the Tutte polynomial (or the matching polynomial).
\end{Theorem}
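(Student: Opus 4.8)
The plan is to assume, for contradiction, that $\AC$ is an EE-invariant, so that there exist $\alpha,\beta,\gamma$ in the ring $\mathcal R=\Rational[x]$ with
$$\AC(G,x)=\AC(G_{-e},x)+\alpha\,\AC(G_{/e},x)+\beta\,\AC(G_{\dagger e},x)$$
for every graph $G$ and every $e\in E(G)$. The base conditions $\AC(\emptyset,x)=1$ and $\AC(K_1,x)=1+x=\gamma$ are automatically consistent with the true polynomial, so all of the content must come from the three-term recurrence, and $\gamma$ plays no further role. My strategy is to over-determine the pair $(\alpha,\beta)$ by applying the recurrence to a few small graphs and then to exhibit one instance that it cannot satisfy. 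A key point in choosing these instances is to pick the eliminated edge $e$ so that it lies in no triangle (equivalently, its two endpoints have no common neighbour); then $G_{/e}$ is again a \emph{simple} graph, and I never have to decide how $\AC$ ought to be defined on multigraphs.

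First I would apply the recurrence to $G=K_2$ with its single edge. Here $G_{-e}$ is two isolated vertices, $G_{/e}=K_1$ and $G_{\dagger e}=\emptyset$, and direct enumeration gives $\AC(K_2,x)=(1+x)^2$ as well, so the recurrence collapses to $0=\alpha(1+x)+\beta$, i.e. $\beta=-(1+x)\alpha$. Next I would take $G=C_4$ and contract one edge, for which $G_{-e}=P_4$, $G_{/e}\cong K_3$ and $G_{\dagger e}=K_2$; using $\AC(C_4,x)=1+4x+6x^2+4x^3$, $\AC(P_4,x)=(1+x)^4$, $\AC(K_3,x)=1+3x+3x^2$ and $\AC(K_2,x)=(1+x)^2$, the recurrence reduces to $x^4+\alpha(1+3x+3x^2)+\beta(1+2x+x^2)=0$. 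Substituting $\beta=-(1+x)\alpha$ collapses this to $\alpha x^3=x^4$, and since $\Rational[x]$ is an integral domain this forces $\alpha=x$ and $\beta=-x(1+x)$.

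The decisive instance is $G=K_{2,3}$, with parts $\{a_1,a_2\}$ and $\{b_1,b_2,b_3\}$, eliminating $e=a_1b_1$; because $K_{2,3}$ is bipartite this edge lies in no triangle, so $G_{/e}$ is simple (in fact $G_{/e}\cong K_4-e$), while $G_{-e}=K_{2,3}-e$ and $G_{\dagger e}=P_3$. Direct enumeration of acyclic vertex sets gives $\AC(K_{2,3},x)=1+5x+10x^2+10x^3+2x^4$, $\AC(K_{2,3}-e,x)=1+5x+10x^2+10x^3+4x^4$, $\AC(K_4-e,x)=1+4x+6x^2+2x^3$ and $\AC(P_3,x)=(1+x)^3$, so the recurrence reads $2x^4+\alpha(1+4x+6x^2+2x^3)+\beta(1+3x+3x^2+x^3)=0$. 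Plugging in the already-forced values $\alpha=x$ and $\beta=-x(1+x)$ yields $0=-x^5$, which is impossible in $\Rational[x]$. Hence no $\alpha,\beta$ satisfy all three instances, and $\AC$ is not an EE-invariant. For the final clause, I would note that the Tutte polynomial is an EE-invariant by part~(iii) of the quoted theorem of \cite{agm08}, and that any evaluation of an EE-invariant is again an EE-invariant, since the defining recurrence survives specializing the formal Tutte variables to elements of $\Rational[x]$; thus if $\AC$ were a Tutte (or matching) evaluation it would be an EE-invariant, contrary to what we have just shown.

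I expect the only real labour to be bookkeeping: correctly enumerating the acyclic vertex sets of the half-dozen small graphs and verifying the two contractions $C_4/e\cong K_3$ and $K_{2,3}/e\cong K_4-e$. There is no deep obstacle here; the single genuine subtlety is the decision to eliminate only triangle-free edges, which keeps every graph appearing in the computation simple and therefore makes the whole argument independent of any convention for extending $\AC$ to multigraphs.
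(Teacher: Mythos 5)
Your proof is correct, and it takes a genuinely different route from the paper's. The paper also refutes EE-invariance by over-determining the recurrence, but it uses two infinite families: paths $P_n$ with $e$ pendant (where $(P_n)_{-e}=K_1\cup P_{n-1}$, $(P_n)_{/e}=P_{n-1}$, $(P_n)_{\dagger e}=P_{n-2}$), from which it concludes $\alpha=\beta=0$, and then cycles $C_n$, from which $x^n=0$, a contradiction. The substantive difference is how the coefficients are treated. The paper's step ``$\alpha(1+x)^{n-1}+\beta(1+x)^{n-2}=0$, hence $\alpha=\beta=0$'' is valid only if $\alpha,\beta$ are scalars; under the definition as literally stated ($\alpha,\beta\in{\mathcal R}$, and ${\mathcal R}$ must contain $\mathbb{Z}[x]$ since $\gamma=\AC(K_1,x)=1+x$), the path relation forces only $\beta=-(1+x)\alpha$, and -- exactly as your second step shows -- the cycle relation is then satisfied by $\alpha=x$, $\beta=-x(1+x)$: indeed $(1+x)^n+x\bigl((1+x)^{n-1}-x^{n-1}\bigr)-x(1+x)^{n-1}=(1+x)^n-x^n$, so paths and cycles alone do not refute ring-valued coefficients. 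Your third instance $K_{2,3}$ (with the triangle-free edge $a_1b_1$, so that $K_{2,3}/e\cong K_4-e$ stays simple) is precisely what closes that loophole: I verified all six enumerations, including $\AC(K_{2,3},x)=1+5x+10x^2+10x^3+2x^4$ and $\AC(K_{2,3}-e,x)=1+5x+10x^2+10x^3+4x^4$, and the forced values do yield $0=-x^5$, impossible in $\mathbb{Q}[x]$. So what each approach buys: the paper's is shorter and avoids finite bookkeeping, but is airtight only for constant $\alpha,\beta$; yours handles $\alpha,\beta\in\mathbb{Q}[x]$ (and, since every step lives in an integral domain, $\mathbb{Q}(x)$ as well), which is the generality actually needed to rule out arbitrary substitution instances of the Tutte polynomial, where the substituted variables may themselves depend on $x$. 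Your closing step -- that an evaluation of an EE-invariant remains an EE-invariant -- is the same (and equally informal) step the paper takes, and your restriction to triangle-free edges mirrors the paper's implicit practice, since pendant path edges and cycle edges with $n\geq 4$ also contract to simple graphs.
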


\begin{proof}
Assume, to reach a contradiction, that the acyclic polynomial $\AC(G,x)$ is an EE-invariant for some $\alpha,\beta,\gamma$. For the path $P_n$ of order $n$ with $e$ an edge adjacent to a leaf, we have that
\[ (P_n)_{-e} = K_1 \cup P_{n-1}, \hspace*{3em} (P_{n})_{/e} = P_{n-1}, \hspace*{3em} (P_{n})_{\dagger e} = P_{n-2},\]
while for the cycle $C_n$ of order $n$ with $e$ any edge of the cycle,
\[ (C_n)_{-e} = P_{n}, \hspace*{3em} (C_{n})_{/e} = C_{n-1}, \hspace*{3em} (C_{n})_{\dagger e} = P_{n-2}.\]
From
\[ \AC(P_{n},x) = (1+x)^n = (1+x)(1+x)^{n-1} + \alpha(1+x)^{n-1} + \beta(1+x)^{n-2}\]
we find that $\alpha = \beta = 0$. However, then
\[ \AC(C_{n},x) = (1+x)^n - x^n = (1+x)^{n} + \alpha ((1+x)^{n-1}-x^{n-1}) + \beta(1+x)^{n-2},\]
from which it follows that $x^n = 0$, a contradiction. Thus $\AC(G,x)$ is not an EE-invariant, and hence not an evaluation of the Tutte polynomial (or the matching polynomial).
\end{proof}

\section{Acyclic Roots}
\label{Section-AcyclicRoots}

The roots of many graph polynomials have received considerable attention:
\begin{itemize}
\item For chromatic polynomials, the chromatic number of a graph $G$ is simply the least positive integer that is \underline{not} a root of its polynomial, and the infamous Four Colour Theorem can be stated as: $4$ is never a root of a chromatic polynomial of a planar graph. There are many, many results on {\em chromatic roots}, that is, the roots of chromatic polynomials (see, for example, \cite[Chapters 12-14]{dongbook}), including that chromatic roots are dense in the whole complex plane, the closure of the real chromatic roots is $[\frac{32}{27},\infty)$, and that the chromatic roots of a graph with maximum degree $\Delta$ are within the disk $\{z \in {\mathbb C}: |z| \leq 8\Delta\}$.
\item Given a graph $G$ where vertices are always operational but each edge is independently operational with probability $p$, the {\em all-terminal reliability (polynomial)} of $G$ is the probability that all vertices can communicate, that is, the operational edges contain a spanning tree of the graph. The roots of all-terminal reliability polynomials were studied first in \cite{browncolbourn}, where it was conjectured that they all lay in the unit disk centered at $z = 1$ (the real roots were shown to be in the disk, and the closure of the roots contained the disk). While the conjecture remained open for approximately 10 years, it was finally shown \cite{roylesokal} to be false, but only by the slimmest of margins (the furthest a root is known to be away from $z = 1$ is approximately $1.14$ \cite{brownlucas}). It is still unknown whether roots of all-terminal reliability are unbounded.
\item The {\em independence polynomial} $I(G,x)$ of a graph $G$ is the generating polynomial $\sum i_k x^k$ for the number of independent sets $i_k$ of each cardinality $k$. There are many interesting results about the roots of independence polynomials ({\it cf.}\ \cite{LevitMandrescu2005}), including Chudnovsky and Seymour's beautiful paper \cite{chudseymour} showing that the roots of independence polynomials of claw-free graphs ({\it i.e.}, graphs that do not contain an induced star $K_{1,3}$) are real (and hence the coefficients are {\em unimodal}, that is, nondecreasing, then nonincreasing).
\end{itemize}

More generally, the roots of polynomials are interesting for a number of reasons.
The nature and location of roots can also show relationships among the coefficients, and the regions that (or do not) contain roots can show interesting structure.
For example, if a polynomial $f$ with positive coefficients has all real roots, then the sequence of coefficients of $f$ is unimodal, and extension shows the same is true provided all of the roots of $f$ lie in the sector $\{z \in {\mathbb C}: 2\pi/3 \leq \mbox{arg} z \leq  2\pi/3\}$ (see~\cite{brenti}). Michelen and Sahasrabudhe~\cite{michelen} prove that if a polynomial $h$ is the probability generating function of a random variable $X \in \{ 0, 1, \ldots , n \}$ with sufficiently large standard deviation, and the polynomial has no roots ``close'' to $1$, then $X$ is approximately normally distributed. As yet another example, Barvinok~\cite{barvinok} built deterministic quasi-polynomial-time approximation algorithms for approximating polynomial functions using zero-free regions of the complex plane. All of the above have been applied to various problems on graph polynomials. (We remark that Marden's  book~\cite{Mar14}  on the geometry of polynomials is an excellent reference on classical results that we rely on for the rest of this section.  See also~\cite{ASV79,Hen74,Hol03}.)

As noted in~\cite{MRB2014}, the location of the roots of graph polynomials can be indicative of various properties of the underlying graph.
Here we investigate acyclic roots, their location and moduli, and also the properties of real roots.  Our main results are:
\begin{enumerate}[(i)]
\item If $G$ is a graph of order $n \geq 4$ and $\AC(G,x)$ is of degree $3$, then $\AC(G,x)$ has one real root and two nonreal roots $s$ and $s^\prime$ (Theorem~\ref{Thm-CubicDiscriminant}). Furthermore, both $s$ and $s^\prime$ both tend to zero as $n$ goes to infinity (Theorem~\ref{roots1}).
\item The roots of an acyclic polynomial of degree $3$ are all in the left half of the complex plane (Theorem~\ref{roots2}).
\item There exist graphs with degree $3$ acyclic polynomials having real roots of arbitrarily large modulus (Theorem~\ref{roots3}).
\item The roots of $\AC(G,x)$ (of any degree)  are all real if and only if $G$ is a forest (Theorem~\ref{roots4}).
\item There are graphs $G$ of arbitrarily large order $n$ which have a real acyclic root in $[a(n),b(n)]$ and no acyclic root of modulus larger than $|a(n)|$, where $a(n) = -\frac{n^2}{2}-\frac{5n}{2}+17$ and $b(n) = -\frac{n^2}{2}-\frac{5n}{2}+18$ (Theorem~\ref{roots5}).
\item There are arbitrarily large graphs with acyclic roots which have a positive real part (Theorem~\ref{roots6}).
\end{enumerate}
We begin our investigation of acyclic roots by looking at the roots of acyclic polynomials of small degree.

\subsection{Roots of Acyclic Polynomials of Degree 3}
\label{Sec-SmallAcyclicDimension}

The only graph whose acyclic polynomial is of degree $1$ is $K_{1}$, with polynomial $1+x$, with a root at $x=-1$.
Acyclic polynomials of degree $2$ are precisely those of
$\overline{K_2}$ ({\it i.e.}, the complement of the complete graph of order 2)
and complete graphs of order $n \geq 2$, and have the form $1 + nx + \binom{n}{2}x^2$.
%The latter has roots of
The acyclic polynomials of degree 2 have roots
\[ \frac{-1}{n-1} \pm \frac{\sqrt{n^2-2n}}{n(n - 1)}i.\]
The modulus of each such root is
$\sqrt{\frac{2}{n^2-n}}$,
which is decreasing, bounded by $1$ (and tends to $0$ as $n \rightarrow \infty$).

The roots so far are rather uninteresting, but the situation changes dramatically when we move to acyclic dimension 3 (see Figure~\ref{acyclicdegree3-order12}). This subsection is devoted to such an investigation. We begin by characterizing when a graph has acyclic dimension $3$.

\begin{figure} [h!]
\centering
	\includegraphics[width=0.75\linewidth]{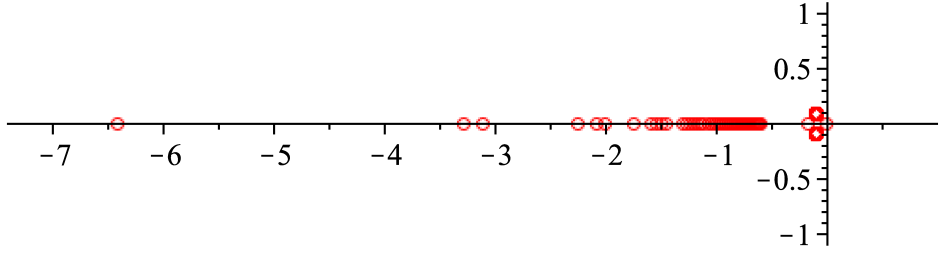}
	\caption{The acyclic roots of all graphs of order $12$ with acyclic dimension $3$.}
	\label{acyclicdegree3-order12}
\end{figure}

%
%\subsection{Acyclic polynomials with degree three}
%

\begin{Theorem}\label{Degree3AcycPoly}
Let $G$ be a graph of order $n$. Then $\AC(G,x)$ has degree three if and only if one of the following holds:
\begin{enumerate}
	\item $G$ is disconnected and $G = \overline{K_3}$ or $G = K_1 \cup K_{n-1}$ with $n\geq3$.
	\item $G$ is connected and $\overline{G}$ is the disjoint union of at least two stars, at least one of which has an edge.
\end{enumerate}
\end{Theorem}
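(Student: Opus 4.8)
The plan is to use the fact, established above, that the degree of $\AC(G,x)$ equals the acyclic dimension $\MF(G)$, that is, the number of vertices in a largest induced forest of $G$. Thus $\AC(G,x)$ has degree $3$ precisely when $G$ has an induced forest on three vertices but none on four. The first condition is easy to dispose of: some three vertices induce a forest (equivalently, fail to induce a triangle) if and only if $n \geq 3$ and $G$ is not complete. The entire difficulty is therefore to characterize the graphs with no induced forest on four vertices, and then to reconcile that characterization with the non-completeness condition and with the connected/disconnected split of the two cases.

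For the second condition I would pass to the complement. The six (unlabelled) forests on four vertices are $\overline{K_4}$, $K_2 \cup 2K_1$, $2K_2$, $P_3 \cup K_1$, $P_4$ and $K_{1,3}$; a four-element set $S$ induces a forest in $G$ exactly when $\overline{G}[S]$ is one of their complements, namely $K_4$, $K_4 - e$ (the diamond), $C_4$, the paw, $P_4$ and $K_3 \cup K_1$. Hence $G$ has no induced four-vertex forest if and only if $\overline{G}$ contains none of these six graphs as an induced subgraph, and the heart of the argument is to show that a graph $H = \overline{G}$ with this property is either $K_3$ or a disjoint union of stars.

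I would prove this structural dichotomy by a triangle analysis, which is where I expect the real work to lie. If $H$ contains a triangle $T$, then any further vertex $v$ is adjacent to $0$, $1$, $2$ or $3$ vertices of $T$, producing respectively an induced $K_3 \cup K_1$, paw, $K_4 - e$ or $K_4$, all of which are forbidden; hence $H = K_3$, giving $G = \overline{K_3}$. If instead $H$ is triangle-free, then it is triangle-free, $C_4$-free and $P_4$-free; each component is a connected cograph (recall that cographs are exactly the $P_4$-free graphs), hence a join whose two parts must be independent by triangle-freeness, so each component is complete bipartite, and $C_4$-freeness then forces it to be a star. Thus $H$ is a disjoint union of stars.

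Finally I would translate back using the fact that the complement of a disconnected graph is connected and conversely. So $G$ is disconnected exactly when $\overline{G}$ is connected, i.e.\ when $\overline{G}$ is $K_3$ or a single star $K_{1,m}$; non-completeness together with $n \geq 3$ then forces $m \geq 2$, yielding $G = \overline{K_3}$ or $G = K_1 \cup K_{n-1}$ (Case~1). When $\overline{G}$ is disconnected, $G$ is connected and $\overline{G}$ is a disjoint union of at least two stars, and $G \neq K_n$ is equivalent to at least one of those stars having an edge (Case~2). The converse directions are then immediate: each listed graph has $\overline{G}$ equal to $K_3$ or a disjoint union of stars, so it avoids all six forbidden subgraphs and is non-complete of order at least $3$, giving acyclic dimension exactly $3$. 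The only points needing care are the boundary orders (confirming that $K_{1,m}$ with $m \leq 1$ is correctly excluded) and verifying that ``at least one star has an edge'' is precisely the non-completeness condition.
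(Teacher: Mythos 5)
Your proposal is correct, but it takes a genuinely different route from the paper's. Both arguments pass to the complement and both invoke the cograph structure theorem (a connected $P_4$-free graph is a join of smaller cographs), but the central step differs. The paper proves necessity in the connected case by noting $G$ is $P_4$-free, hence a join, so $\overline{G}$ is a disjoint union of connected cographs $H_i$, and then shows each $H_i$ is a star by an ad hoc two-case analysis (if $H_i$ has a universal vertex, two of its neighbours must be adjacent, producing a large independent set in $G$; otherwise a shortest-path argument manufactures a noninduced $P_4$ in $\overline{G}$); sufficiency is a separate case analysis of $4$-subsets according to how they meet the star components of $\overline{G}$, and the disconnected case is handled by summing degrees of the components' acyclic polynomials. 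You instead translate ``$\deg \AC(G,x)=3$'' into ``some $3$-set but no $4$-set induces a forest,'' note that the first condition is exactly $n \geq 3$ and $G \neq K_n$, and encode the second as: $\overline{G}$ contains none of the complements of the six $4$-vertex forests ($K_4$, the diamond $K_4-e$, $C_4$, the paw, $P_4$, $K_3 \cup K_1$) as an induced subgraph. Your structural dichotomy is then clean: if $H=\overline{G}$ has a triangle, any fourth vertex together with that triangle induces one of $K_3\cup K_1$, the paw, the diamond or $K_4$, so $H = K_3$; if $H$ is triangle-free, each component is a connected cograph, hence a join whose parts are forced independent by triangle-freeness, hence complete bipartite, hence a star by induced-$C_4$-freeness. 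This buys a unified treatment (the connected/disconnected split is deferred to a final bookkeeping step) and makes the converse direction immediate from the forbidden-subgraph characterization, where the paper needs its own case analysis; the paper's route, in exchange, avoids enumerating the forests on four vertices and stays entirely elementary. One small caution: your phrase ``the complement of a disconnected graph is connected and conversely'' is false in general (both $G$ and $\overline{G}$ can be connected, e.g.\ for $P_4$); it is harmless here only because the connected graphs arising as $\overline{G}$ in your family are exactly $K_3$ and the stars $K_{1,m}$ with $m \geq 1$, whose complements $\overline{K_3}$ and $K_1 \cup K_m$ you compute to be disconnected, so the claim should be verified in that explicit form rather than cited as a general fact.
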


\begin{Proof}
We begin by showing the reverse direction.
First, note that the degree of the acyclic polynomial of a graph is the sum of the degrees of the acyclic polynomials of its components.

Consider the cases where $G$ is disconnected.
%\begin{enumerate}
%\item
If $G$ is $\overline{K_3}$, then it is clear that $\AC(G,x) = (x+1)^3$, so $\AC(G,x)$ has degree three.
%\item
If $G =  K_1 \cup K_{n-1}$ with $n\geq3$, then, because $\AC(K_1,x)$ has degree one and $\AC(K_{n-1},x)$ has degree two, $\AC(G,x)$ has degree three.
%\end{enumerate}

Now suppose $G$ is connected and $\overline{G}$ is the disjoint union of at least two stars, at least one of which has an edge. Then $G$  has two vertices that are not adjacent and $G$ has at least three vertices.
Since any set of three vertices, two of which are not adjacent, induces an acyclic subgraph of $G$ then the degree of $\AC(G,x)$ is at least three.
It remains to show that any subset $S \subseteq V(G)$ with four vertices contains a cycle. We will show this by considering cases based on how many vertices in $S$ belong to the same component of $\overline G$.
	
Suppose $S$ contains at least three vertices from the same component of $\overline G$. Then two of these vertices, call them $u$ and $v$, must be leaves and are therefore not adjacent in $\overline G$. Furthermore, both $u$ and $v$ are joined in $\overline{G}$ to the same vertex and to no other vertex. Thus, $S$ must contain a vertex that is independent of both $u$ and $v$ in $\overline G$ ({\it i.e.}, another leaf from the same component or a vertex from another component of $\overline G$) as shown in Figure~\ref{S4UK1}. Since $S$ contains three vertices that are independent in $\overline G$, $S$ contains three vertices that form a 3-cycle $G$.

\begin{figure}[h!]
\centering
\begin{tikzpicture}
	\tikzstyle{every node}=[circle, draw, font=\small, scale=0.7]
	
	\node[scale=0.6/0.7] (b) at (-4,-3/4) {$b$};
	\node (u) at (150:2cm) {$u$};
	\node (v) at (270:2cm) {$v$};
	\node (a) at (30:2cm) {$a$};
	\node (x) at (0,0) {$x$};
	
	\draw (v)--(x)--(u);
	\draw (x)-- (a);
\end{tikzpicture}
\caption{Example of $\overline G$. If vertices $u$ and $v$ are part of $S$, then $a$ or $b$ must also be part of $S$, forming a 3-cycle in $G$.}
\label{S4UK1}
\end{figure}
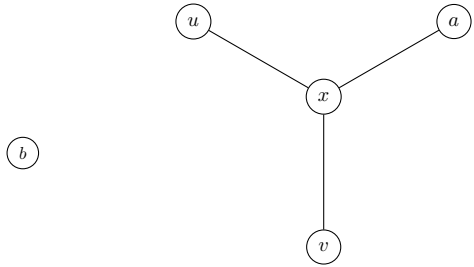

%\indent
If $S$ does not contain three vertices from one component in $\overline G$, but does contain exactly two vertices, $u$ and $v$, from one component, then $S$ must contain two other vertices to which neither $u$ nor $v$ are joined in $\overline G$. Thus, these four vertices will contain a 4-cycle in $G$, as shown in Figure~\ref{C4inComp}.

\begin{figure}[h!]
\centering
\begin{tikzpicture}
	\tikzstyle{every node}=[circle, draw, font=\small, scale=0.7]
	
	\node (a) at (0,4) {$a$};
	\node[scale=0.6/0.7] (b) at (4,4) {$b$};
	\node (u) at (0,0) {$u$};
	\node (v) at (4,0) {$v$};
	
	\draw[red] (a)--(u)--(b)--(v)--(a);
	\draw[dashed] (a)--(b);
	\draw[thick, dashed] (u)--(v);
\end{tikzpicture}
\caption{If $u$ and $v$ are from the same component in $\overline G$, but $a$ and $b$ do not belong to this star, then these four vertices form a 4-cycle in $G$. The dotted lines represent edges that may or may not exist and the solid red lines show the edges that form a 4-cycle in $G$.}
\label{C4inComp}
\end{figure}
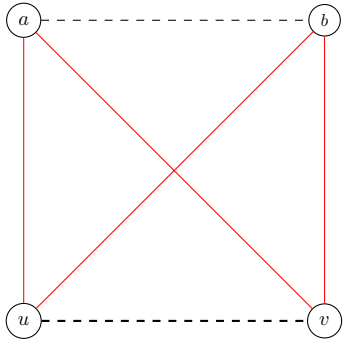

Lastly, if $S$ contains no more than one vertex from any given component of $\overline G$, then $S$ contains four vertices that are all independent in $\overline G$. These four vertices form $K_4$ in $G$, which is clearly not acyclic. Thus, $S$ is not an acyclic subset of $G$.

It follows that any subset of $V(G)$ with at least four vertices must contain a cycle. Therefore, $\AC(G,x)$ has degree three.
%\end{enumerate}

To show the forward direction, assume $G$ has acyclic dimension at least three (so $G$ has order  $n\geq 3$). We divide the proof into two cases based on the connectivity of $G$.

%\begin{enumerate}
%	\item
For the first case, suppose $G$ is disconnected. Recall that the acyclic dimension of $G$ equals the degree of $\AC(G,x)$,
and since the acyclic dimension of $G$ is the sum of its components' acyclic dimensions, $G$ cannot have more than three components.
If $G$ has three components, then the acyclic polynomial of each component has degree one. That is, $G = \overline{K_3}$.
Otherwise, $G$ has two components, with one component $G_1$ having an acyclic polynomial of degree one and the other $G_2$ degree two. It follows from our previous work that $G_1 = K_1$ and $G_2$ is complete of order at least $2$, that is, $G_2 = K_{n-1}$, so $G = K_1 \cup K_{n-1}$.

%	\item
Now for the second case, suppose $G$ is connected. Notice that $G$ cannot contain an induced $P_4$ for otherwise $G$ would have acyclic dimension at least four. Hence $G$ is a cograph. Since $G$ is connected on $n\geq3$ vertices, $G$ must be the join of $k \geq 2$ smaller cographs, $F_1, \ldots, F_k$. Thus $\overline{G}$ is the disjoint union of $H_1 = \overline{F_1}, \ldots, H_k = \overline{F_k}$, and without loss of generality we may assume that $H_1, \ldots, H_k$ are connected. Also notice that $H_1, \ldots, H_k$ must be cographs since $P_4$ is self-complementary. Suppose -- to reach contradiction -- that some $H_i$ is not a star.
	
	If $H_i$ has a universal vertex $u$, then two vertices, $v,w$, both joined to $u$, must share an edge, as shown in Figure~\ref{S5withEdge}, as $H_i$ is not a star. So, $u,v,w$ are three independent vertices in $G$. Thus taking $u,v,w$ with any other vertex in $G$ gives an acyclic subset with four vertices ($G$ must contain another vertex as $k \geq 2$). It follows that $\AC(G,x)$ has degree greater or equal four, contradicting our assumption that $\AC(G,x)$ has degree three.

\begin{figure}[h!]
\centering
\begin{tikzpicture}
	\tikzstyle{every node}=[circle, draw, font=\small, scale=0.7]
	
	\node (u) at (0,0) {$u$};
	\node (x) at (270:2cm) {$x$};
	\node (y) at (180:2cm) {$y$};
	\node (w) at (90:2cm) {$w$};
	\node (v) at (0:2cm) {$v$};
	
	\draw (w)--(u)--(x);
	\draw (w)--(v)--(u)--(y);
\end{tikzpicture}
\caption{If $H_i$ has a universal vertex $u$, but is not a star, then two vertices not including $u$, for example $v$ and $w$, must be joined.}
\label{S5withEdge}
\end{figure}
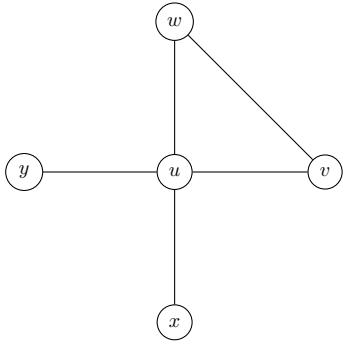

If, on the other hand, $H_i$ does not have a universal vertex then $H_i$ contains at least four vertices. Let $u$ be one of these vertices. Then $u$ must be adjacent in $H_i$ to another vertex, $v$. A third vertex, $w$, must also be adjacent in $H_i$ to one of these two vertices, say $v$. Note that $w$ cannot be adjacent to both $u$ and $v$, as otherwise $u,v,w$ will be an independent set of three vertices in $G$, which will form an acyclic subset of size four with any other vertex in $G$. Without loss of generality, $w$ is joined to $v$ (but not $u$).
	
Since $v$ is not a universal vertex, we know that there exists another vertex $x$ in $H_i$ that is not joined to $v$ in $H_i$. If $x$ is joined to either $u$ or $w$ in $\overline G$, then it is easy to see that $\{u,v,w,x\}$ is an acyclic set in $G$ (as the subgraph of $\overline G$ induced by $\{u,v,w,x\}$ properly contains a $P_4$ as a subgraph), and we have a contradiction. Thus $x$ is joined to both $u$ and $w$ (but not $v$). However, in this case $\{u,v,w,x\}$ is again an acyclic set in $G$ since the subgraph induced by
$\{u,v,w,x\}$ is $K_2 \cup K_2$ (see Figure~\ref{C4Comp}). Hence, $x$ is not adjacent to $u$, $v$, or $w$.
		
Consider a shortest path $P$ in $H_i$ from $x$ to the set $\{u,v,w\}$. Let $y$ be the last vertex on this path that is not in  $\{u,v,w\}$. By the same argument as for $x$, $y$ is not joined to $u$ or $w$ in $H_i$, so $y$ is joined to $v$ in $H_i$. In particular, $y \neq x$, so there is a previous vertex $y^\prime$ on path $P$ joined to $y$ ($y^\prime$ could be $x$). However, then in $H_i, \{u,v,y,y^\prime\}$ properly contains a $P_4$ as a subgraph, as shown in Figure~\ref{P4ExtraVertex}, and hence is acyclic in $G$, yielding a contradiction. Thus $H_i$ must be a star.
	
\begin{figure}[h!]
\centering
\begin{subfigure}[b]{0.4\linewidth}
	\centering
	\begin{tikzpicture}
		\tikzstyle{every node}=[circle, draw, font=\small, scale=0.7]
		
		\node (u) at (90:1.5cm) {$u$};
		\node (x) at (180:1.5cm) {$x$};
		\node (w) at (270:1.5cm) {$w$};
		\node (v) at (0:1.5cm) {$v$};
		
		\draw (v)--(u)--(x)--(w)--(v);
		\draw[red] (x)--(v);
		\draw[red] (w)--(u);
	\end{tikzpicture}
	\caption{If $x$ is joined to $u$ and $w$ in $H_i$, then $\{u,v,w,x\}$ is an acyclic subset of $G$. The black lines show the edges in $H_i$ ($\overline G$) and the red shows the edge in  $\overline{H_i}$ ($G$).}
	\label{C4Comp}
\end{subfigure}
\hspace{1em}
\begin{subfigure}[b]{0.4\linewidth}
	\centering
	\begin{tikzpicture}
		\tikzstyle{every node}=[circle, draw, font=\small, scale=0.7]
		
		\node (u) at (90:1.5cm) {$u$};
		\node (y) at (180:1.5cm) {$y$};
		\node (w) at (270:1.5cm) {$w$};
		\node (v) at (0:1.5cm) {$v$};
		\node[scale=0.55/0.7] (y') at (-3.5,0) {$y^\prime$};
		
		\draw[blue] (u)--(v)--(y)--(y');
		\draw (w)--(v);
	\end{tikzpicture}
	\caption{A shortest path $P$ from a vertex $x$ that is not joined to $v$ must include a (noninduced) $P_4$, shown in blue.\\ \\}
	\label{P4ExtraVertex}
\end{subfigure}
\caption{If $H_i$ is acyclic, then $H_i$ must have a universal vertex.}
\label{HiFaults}
\end{figure}
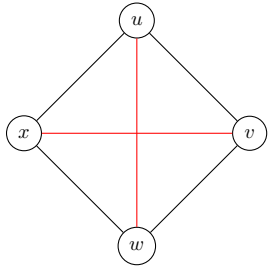
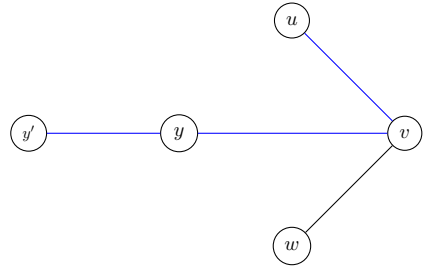

%\begin{figure}[h!]
%\centering
%\begin{tikzpicture}
%	\tikzstyle{every node}=[circle, draw, font=\small, scale=0.7]
%	
%	\node (u) at (0,4) {u};
%	\node (v) at (4,4) {v};
%	\node (w) at (4,0) {w};
%	\node (x) at (0,0) {x};
%	
%	\draw (u)--(v)--(w)--(x)--(u);
%	\draw[red] (u)--(w);
%	\draw[red] (x)--(v);
%\end{tikzpicture}
%\caption{$\overline{H_i}$ is forced to contain $C_4$ (shown in black), whose complement is acyclic (shown in red).}
%\label{C4withComp}
%\end{figure}

It follows that $\overline{G}$ is the disjoint union of stars. Furthermore, one of these stars must have an edge, as otherwise, $G$ will be the complete graph whose acyclic polynomial has degree two.
%\end{enumerate}
\end{Proof}

As it is now straightforward to recognize graphs whose acyclic polynomials have degree three, we also wish to be able to find the acyclic polynomial for any one of these graphs. We already know $a_0$, $a_1$, and $a_2$, so this task is equivalent to finding an expression for $a_3$.

\begin{Theorem} \label{a3Formula}
Let $G$ be a connected graph with order $n$ and suppose $\AC(G,x)$ has degree three so that the complement of $G$ is the disjoint union of $k$ stars $H_1,H_2,\ldots,H_k$. Let $e_i$ be the number of edges in $H_i$. Then $a_3$ is given by $\left(n-\frac{3}{2}\right)(n-k) - \frac{1}{2}\sum_{i=1}^k e_i^2$.
\end{Theorem}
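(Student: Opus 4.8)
The plan is to compute $a_3$ as the number of $3$-subsets of $V(G)$ that are \emph{not} triangles of $G$, organizing the count through the complement $\overline{G}$, whose structure as a disjoint union of stars makes the enumeration transparent. First I would record the basic dictionary: a $3$-set $\{u,v,w\}$ induces a cycle in $G$ precisely when it is a triangle, which happens exactly when the three vertices are pairwise nonadjacent in $\overline{G}$, i.e.\ form an independent set of $\overline{G}$. Hence a $3$-set is acyclic in $G$ if and only if it contains at least one edge of $\overline{G}$, so $a_3$ equals the number of $3$-subsets that contain at least one edge of $\overline{G}$.

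Next I would exploit that $\overline{G} = H_1 \cup \cdots \cup H_k$ is a disjoint union of stars, and in particular is triangle-free, so no $3$-set can contain three edges of $\overline{G}$. Every acyclic $3$-set therefore contains \emph{exactly} one or \emph{exactly} two edges of $\overline{G}$, and I count these two types separately.

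For the two-edge type, the two edges necessarily share a vertex (two disjoint edges would require four vertices), so the set is a $P_3$ of $\overline{G}$. In a star only the center has degree exceeding one, so the shared (middle) vertex must be the center of some $H_i$ and the other two vertices must be two of its leaves; this yields $\sum_{i=1}^{k}\binom{e_i}{2}$ such sets. For the one-edge type, I would choose an edge of $\overline{G}$ and then a third vertex inducing no further edge. Here is the crux of the argument: an edge of a star is a center--leaf pair, and within that same star every remaining vertex is a leaf adjacent to the center, hence would create a second edge; thus the third vertex must lie outside the star containing the chosen edge, giving $n-(e_i+1)$ choices. Summing over the $e_i$ edges of each $H_i$ produces $\sum_{i=1}^{k} e_i\,(n-e_i-1)$.

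Finally I would add the two contributions and simplify, using that each star $H_i$ has $e_i+1$ vertices and hence $\sum_{i=1}^{k} e_i = n-k$. A short calculation then gives
\[
a_3=\sum_{i=1}^{k} e_i(n-e_i-1)+\sum_{i=1}^{k}\binom{e_i}{2}
=\Big(n-\tfrac32\Big)\sum_{i=1}^{k} e_i-\tfrac12\sum_{i=1}^{k} e_i^2
=\Big(n-\tfrac32\Big)(n-k)-\tfrac12\sum_{i=1}^{k} e_i^2 .
\]
The only step requiring genuine care --- the main obstacle --- is the classification of acyclic $3$-sets by their number of $\overline{G}$-edges together with the observation (in the single-edge case) that a center--leaf edge dominates its entire star, which forces the third vertex into another component; the closing algebra is routine once $\sum e_i = n-k$ is in hand.
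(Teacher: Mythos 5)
Your proof is correct and takes essentially the same approach as the paper: both reduce to counting $3$-sets containing an edge of $\overline{G}$ and use the star structure, with the same key quantities $\sum_i e_i = n-k$ and $\sum_i \binom{e_i}{2}$ for the $P_3$-shaped sets. The only difference is bookkeeping --- the paper counts all (edge, third vertex) pairs as $(n-2)(n-k)$ and subtracts the doubly-counted two-edge sets, while you partition the acyclic $3$-sets by whether they contain exactly one or exactly two complement edges, which is algebraically the identical computation.
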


\begin{Proof}
Suppose $S$ is an acyclic subset of $G$ with three vertices. Then $S$ must contain two vertices that are not joined. This corresponds to two vertices that are joined in $\overline{G}$. Since $\overline{G}$ is the disjoint union of $k$ stars, $\overline{G}$ has $n-k$ edges, {\it i.e.}, $\sum_{i=1}^k e_i = n-k$. Thus, there are $n-k$ ways to choose two vertices joined by an edge in $\overline{G}$.

Given the vertices of an edge of $\overline{G}$, there are $n-2$ ways to choose a third vertex to form an acyclic subset of $G$. However, if this third vertex belongs to the same component as the other two in $\overline G$, then we have over-counted this subset once. There are ${{e_{i}} \choose {2}} = \frac{e_i(e_i-1)}{2}$ ways to choose such a $3$-subset in $H_i$.

So, if $\overline{G}$ is the disjoint union of stars, then there are
\begin{equation*}
	(n-2)(n-k) - \sum_{i=1}^k \frac{e_i(e_i-1)}{2}
\end{equation*}
ways to form an acyclic subset with three vertices.
%\indent However, if $\overline{G}$ contains a star with $e_i \geq 2$ edges, then $(n-2)(n-k)$ counts every acyclic subset from all such stars twice. There are $e_i+1$ vertices in a star with $e_i$ edges. So, for a given star with $e_i$ edges, there are $\frac{e_i(e_i-1)}{2}$ ways to form an acyclic subset with three vertices. Thus, again there are
%\begin{equation}
%	(n-2)(n-k) - \sum_{i=1}^k \frac{e_i(e_i-1)}{2}
%\end{equation}
%ways to form an acyclic subset with three vertices.
We then obtain
\begin{equation*}
\begin{split}
	a_3 &= (n-2)(n-k) - \sum_{i=1}^k \frac{e_i^2 - e_i}{2}\\
%	& = (n-2)(n-k) - \frac{1}{2}\sum_{i=1}^k e_i^2 + \frac{1}{2}\sum_{i=1}^k e_i\\
	& = (n-2)(n-k) - \frac{1}{2}\sum_{i=1}^k e_i^2 + \frac{n-k}{2}\\
	& = \left( n-\frac{3}{2} \right)(n-k) - \frac{1}{2}\sum_{i=1}^k e_i^2
\end{split}
\end{equation*}
\end{Proof}

Of course $a_3 \leq \binom{n}{3}$ where $n$ is the order of the graph. However, there is a much better upper bound on $a_3$ for graphs with acyclic dimension $3$. To find such a bound, we first make the following observation:

\begin{Lemma} \label{Lemma-EqualEdges}
For fixed $n$ and $k$, if $e_1,\ldots,e_k$ are nonnegative reals with $\sum_{i=1}^k e_i = n-k$, then
$$ \left( n-\frac{3}{2} \right)(n-k) - \frac{1}{2}\sum_{i=1}^k e_i^2 $$
is maximized when $e_1=e_2=\cdots=e_k = \frac{n-k}{k}$.
\end{Lemma}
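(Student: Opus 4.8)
The plan is to observe that the first term, $(n-\frac{3}{2})(n-k)$, is a constant once $n$ and $k$ are fixed, so maximizing the full expression is equivalent to \emph{minimizing} the sum of squares $\sum_{i=1}^k e_i^2$ over the feasible region, namely the simplex $\{(e_1,\ldots,e_k) : e_i \geq 0, \ \sum_{i=1}^k e_i = n-k\}$. The objective $\sum_i e_i^2$ is strictly convex and the feasible region is compact and convex, so a unique minimizer exists; I would identify it explicitly rather than invoke an abstract existence argument.

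The cleanest route is to apply the Cauchy--Schwarz inequality (equivalently, the QM--AM or power-mean inequality), which gives
$$\left( \sum_{i=1}^k e_i \right)^2 \leq k \sum_{i=1}^k e_i^2.$$
Substituting the constraint $\sum_{i=1}^k e_i = n-k$ yields $\sum_{i=1}^k e_i^2 \geq \frac{(n-k)^2}{k}$, and equality in Cauchy--Schwarz holds precisely when all the $e_i$ are equal, i.e.\ $e_i = \frac{n-k}{k}$ for every $i$. Since $n \geq k$ whenever $G$ genuinely has acyclic dimension three, this common value is nonnegative, so the equality point lies in the feasible region; hence the lower bound on $\sum_i e_i^2$ is attained there, and consequently the target expression attains its maximum at that point. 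For completeness I would record the maximum value as $(n-\frac{3}{2})(n-k) - \frac{(n-k)^2}{2k}$.

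There is no substantial obstacle here; the only point requiring a word of care is confirming that the Cauchy--Schwarz equality case is \emph{feasible} (nonnegativity of the common value), which holds automatically. As a reserve argument I would keep a Lagrange-multiplier computation: the stationarity conditions force all $e_i$ equal, and the convexity of $\sum_i e_i^2$ guarantees this critical point is the global minimizer. I prefer the Cauchy--Schwarz route, however, since it delivers both the extremal configuration and the equality characterization in a single step without checking second-order conditions.
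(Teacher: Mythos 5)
Your proposal is correct and follows essentially the same route as the paper: the paper also reduces the problem to minimizing $\sum_{i=1}^k e_i^2$ subject to the fixed sum and applies the Cauchy--Schwarz inequality with vectors $(e_1,\ldots,e_k)$ and $(1,\ldots,1)$, with the equality case forcing all $e_i$ equal to $\frac{n-k}{k}$. Your added remarks on feasibility of the equality point and the reserve Lagrange-multiplier argument are fine but not needed beyond what the paper does.
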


\begin{Proof}
Let $\textbf{u}=(e_1, \ldots, e_k)$  and let $\textbf{v}$ be the $k$-dimensional vector $(1, \ldots, 1)$. The well-known Cauchy-Schwartz inequality states that $||\textbf{u}|| ||\textbf{v}|| \geq \textbf{u}\cdot\textbf{v}$ with equality if and only if $\textbf{u} = a\textbf{v}$ for some scalar $a$. So,
\begin{equation*}
%\begin{split}
	\sum_{i=1}^k e_i^2  = ||\textbf{u}||^2
	\geq \frac{(\textbf{u}\cdot\textbf{v})^2}{||\textbf{v}||^2}
	 = \frac{(\sum_{i=1}^k e_i)^2}{k}.
%\end{split}
\end{equation*}
It follows that $\sum_{i=1}^k e_i^2$ is minimized when $\textbf{u}$ is a scalar multiple of $\textbf{v}$, that is, when each $e_i = \frac{n-k}{k}$. The result follows. \end{Proof}

This result allows us to find an upper bound on $a_3$ for graphs with acyclic dimension $3$ that is an order of magnitude smaller than ${n \choose 3}$.

\begin{Theorem} \label{a3UpperBound}
The leading coefficient $a_3$ of an acyclic polynomial for a graph of order $n$ with acyclic dimension three is at most
$$ f(n) = n^2 - \frac{n^2}{\sqrt{2n-2}} - \frac{n\sqrt{2n-2}}{2} - \frac{n}{2} + \frac{n}{\sqrt{2n-2}} $$
\end{Theorem}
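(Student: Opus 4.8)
The plan is to turn the problem into a one-variable optimization by feeding Theorem~\ref{a3Formula} into Lemma~\ref{Lemma-EqualEdges}. For a connected graph $G$ of order $n$ with acyclic dimension three, whose complement is a disjoint union of $k$ stars $H_1,\ldots,H_k$ with $e_i$ edges (so that $\sum_{i=1}^k e_i = n-k$), Theorem~\ref{a3Formula} gives $a_3 = \left(n-\frac32\right)(n-k) - \frac12\sum_{i=1}^k e_i^2$. Holding $k$ fixed, Lemma~\ref{Lemma-EqualEdges} shows this is largest when every $e_i = \frac{n-k}{k}$, in which case $\sum_{i=1}^k e_i^2 = \frac{(n-k)^2}{k}$. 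Hence $a_3 \leq g(k)$, where $g(k) = \left(n-\frac32\right)(n-k) - \frac{(n-k)^2}{2k}$.

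Next I would maximize $g$ over $k$. Because we only want an upper bound, it is legitimate to relax $k$ to a positive real variable and bound the admissible integer values by the continuous maximum. Expanding gives $g(k) = n^2 - nk - \frac n2 + k - \frac{n^2}{2k}$, so $g'(k) = -(n-1) + \frac{n^2}{2k^2}$ and $g''(k) = -\frac{n^2}{k^3} < 0$. Thus $g$ is strictly concave on $(0,\infty)$, and its unique stationary point, found from $g'(k)=0$, is a global maximum at $k^\ast = \frac{n}{\sqrt{2n-2}}$; consequently $g(k) \leq g(k^\ast)$ for every $k>0$, and in particular for every admissible $k$.

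It then remains to evaluate $g(k^\ast)$. This final simplification is the only real (albeit routine) labour: writing $s = \sqrt{2n-2}$, one has $nk^\ast = n^2/s$, $k^\ast = n/s$, and $\frac{n^2}{2k^\ast} = \frac{ns}{2}$, so that $g(k^\ast) = n^2 - \frac{n^2}{s} - \frac n2 + \frac{n}{s} - \frac{ns}{2}$, which is exactly $f(n)$. This establishes $a_3 \leq f(n)$ for all connected graphs of acyclic dimension three.

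Finally, I would dispose of the two disconnected cases identified in Theorem~\ref{Degree3AcycPoly}. For $G = \overline{K_3}$ we have $a_3 = 1$, and for $G = K_1 \cup K_{n-1}$ we have $a_3 = \binom{n-1}{2}$; since $f(n) \sim n^2$ while $\binom{n-1}{2} \sim n^2/2$, both lie well below $f(n)$ (and a direct check confirms the inequality for the small values of $n$). Combining the connected and disconnected cases yields $a_3 \leq f(n)$ for every graph of order $n$ with acyclic dimension three.
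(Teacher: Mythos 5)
Your proposal follows the paper's proof essentially step for step: both reduce the connected case via Theorem~\ref{a3Formula} and Lemma~\ref{Lemma-EqualEdges} to maximizing the one-variable function $B(k) = k(1-n) - \frac{n^2}{2k} + n^2 - \frac{n}{2}$ over positive real $k$, use concavity ($B'' < 0$) to identify the unique critical point $k^{\ast} = n/\sqrt{2n-2}$ as the global maximum, and evaluate there to obtain $f(n)$, before disposing of the two disconnected cases from Theorem~\ref{Degree3AcycPoly} separately. The only difference is cosmetic: the paper treats the disconnected cases first and makes the comparison $\binom{n-1}{2} < f(n)$ fully explicit (positivity of $(n^2+2n-2)\sqrt{2n-2}-4n^2+4n$ for $n>9$, plus a direct check for $3 \leq n \leq 8$), whereas you appeal to asymptotics together with an unspecified finite check --- a routine detail to fill in, not a substantive gap.
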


\begin{Proof}
Let $n$ be fixed. First, if $G$ is disconnected, then either (i) $G = \overline{K_3}$ with $a_{3} =1 \leq 3/2 = f(3)$, or (ii) $G$ is the disjoint union of a single vertex and $K_{n-1}$, in which case $a_3 = \binom{n-1}{2} =  \frac{(n-1)(n-2)}{2}$. In the second case, $a_3 < f(n)$ is equivalent to
%\[ \frac{(2n^2-n)\sqrt{2n-2}-4n^2+4n}{2\sqrt{2n-2}}>0.\]
\[ \frac{(n^2+2n-2)\sqrt{2n-2}-4n^2+4n}{2\sqrt{2n-2}}>0.\]
Clearly the denominator is positive, so we need only check that the numerator is positive as well. For $n > 9$, $\sqrt{2n-2} > 4$, and the numerator is greater than
%\begin{equation}
%\[
	$(n^2+2n-2) \cdot 4-4n^2+4n$, which is positive.
%\]
%\end{equation}
For $3 \leq n \leq 8$, a simple calculation will verify that the numerator is again positive. In all cases, we find that
$\frac{(n^2+2n-2)\sqrt{2n-2}-4n^2+4n}{2\sqrt{2n-2}}$
is positive so $a_3 < f(n)$.

We now assume that $G$ is connected. From Lemma~\ref{Lemma-EqualEdges}, we know that for a given $k$, $a_3$ is maximized when each $e_i = \frac{n-k}{k}$. Thus,
\begin{equation*}
\begin{split}
	a_3 &\leq \left(n-\frac{3}{2}\right)(n-k) - \frac{1}{2}\sum_{i=1}^k \left(\frac{n-k}{k}\right)^2\\
	& = \left(n-\frac{3}{2}\right)(n-k) - \frac{(n-k)^2}{2k}\\
%	& = n^2 - \frac{3}{2}n - kn + \frac{3}{2}k - \frac{n^2}{2k} + n -\frac{k}{2}\\
	& = k(1 - n) - \frac{n^2}{2k} + n^2 - \frac{1}{2}n.
\end{split}
\end{equation*}
Let $B(x) = x(1 - n) - \frac{n^2}{2x} + n^2 - \frac{1}{2}n$ for $1 \leq x \leq n-1$. At the maximum value of $B$,
\begin{equation*}
%\begin{split}
	0 = \frac{dB}{dx}
	 = 1 - n + \frac{n^2}{2x^2}.
%\end{split}
\end{equation*}
So, $x = \frac{n}{\sqrt{2n-2}}$ is a critical point of $B(x)$. The second derivative of $B$ is
\begin{equation*}
 	\frac{d^2B}{dx^2} = -\frac{n^2}{x^3} < 0.
\end{equation*}
Thus, $x = \frac{n}{\sqrt{2n-2}}$ is in fact the absolute maximum of $B$. Therefore,
\begin{equation*}
%\begin{split}
	a_3 \leq B\left( \frac{n}{\sqrt{2n-2}} \right)
	 = n^2 - \frac{n^2}{\sqrt{2n-2}} - \frac{n\sqrt{2n-2}}{2} - \frac{n}{2} + \frac{n}{\sqrt{2n-2}}
%\end{split}
\end{equation*}
\end{Proof}

\noindent
(We remark that with more work, one can show that this upper bound is never achieved.)

We shall need as well a lower bound for $a_3$, and in this case the bound is tight.

\begin{Theorem} \label{Minimuma3}
Suppose $\AC(G,x) = a_3x^3 + \frac{n(n-1)}{2}x^2 + nx + 1$ is an acyclic polynomial with degree three. Then $a_3 \geq n - 2$, with equality if $G=K_n-e$, the complete graph of order $n \geq 3$ minus an edge.
\end{Theorem}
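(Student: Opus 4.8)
The plan is to split into the two structural cases supplied by Theorem~\ref{Degree3AcycPoly} and, in each, to extremize the available formula for $a_3$. In the disconnected case I would simply evaluate $a_3$ on the two possibilities directly. When $G = \overline{K_3}$ we have $n = 3$ and $\AC(G,x) = (1+x)^3$, so $a_3 = 1 = n-2$. When $G = K_1 \cup K_{n-1}$, Equation~(\ref{Eq0}) gives $\AC(G,x) = (1+x)\AC(K_{n-1},x)$, whence $a_3 = \binom{n-1}{2} = \frac{(n-1)(n-2)}{2}$; since $\frac{(n-1)(n-2)}{2} - (n-2) = \frac{(n-2)(n-3)}{2} \geq 0$ for $n \geq 3$, the bound $a_3 \geq n-2$ holds here too.

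For the connected case I would work from Theorem~\ref{a3Formula}, writing $m = n - k$ so that $\sum_{i=1}^k e_i = m$; because $G$ is connected, $\overline{G}$ has at least two star components and at least one edge, forcing $1 \leq m \leq n-2$. To obtain a \emph{lower} bound on $a_3$ I must \emph{maximize} $\sum e_i^2$ (the opposite extreme to Lemma~\ref{Lemma-EqualEdges}), and the elementary inequality $\sum e_i^2 \leq \left(\sum e_i\right)^2 = m^2$ for nonnegative $e_i$ gives, on setting $g(m) = \left(n-\tfrac32\right)m - \tfrac12 m^2$,
\[ a_3 \;\geq\; \left(n-\tfrac32\right)m - \tfrac12 m^2 \;=\; g(m). \]
Since $g'(m) = \left(n-\tfrac32\right) - m \geq \tfrac12 > 0$ throughout $[1,n-2]$, the function $g$ is increasing there, so $g(m) \geq g(1) = n-2$, and therefore $a_3 \geq n-2$ in the connected case as well.

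For the equality claim I would trace back through the two inequalities within the connected family: equality forces $m = 1$ (as $g$ is strictly increasing) and $\sum e_i^2 = m^2 = 1$ with $\sum e_i = 1$, which forces exactly one $e_i = 1$ and the rest $0$; hence $\overline{G} = K_2 \cup (n-2)K_1$, i.e.\ $G = K_n - e$. Conversely, for $G = K_n-e$ one has $k = n-1$ and $m = 1$, and Theorem~\ref{a3Formula} returns $a_3 = \left(n-\tfrac32\right) - \tfrac12 = n-2$, confirming that the bound is attained. (I would note that, because of the $n=3$ coincidences in the disconnected case, $K_n-e$ is the unique minimizer only among connected graphs, which is consistent with the theorem asserting equality as a sufficient condition.)

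The main obstacle is conceptual rather than computational: one must recognize that, since the coefficient of $\sum e_i^2$ in the formula is negative, the minimizing configuration comes from the \emph{most} unequal distribution of edges (a single large star accompanied by isolated vertices), and that the number of stars $k$ has to be optimized \emph{simultaneously} with the edge distribution rather than held fixed. Once $m = n-k$ is isolated as the single governing parameter, the remainder is a routine one-variable convexity check on the interval $[1,n-2]$.
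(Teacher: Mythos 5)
Your proof is correct, but it takes a genuinely different route from the paper's. The paper avoids casework entirely: since $G$ has acyclic dimension three it is not complete, so $G$ is a spanning subgraph of $H = K_n - e$ (for a suitable missing edge), and since deleting edges can never turn an acyclic vertex subset into a cyclic one, every acyclic $3$-subset of $H$ is acyclic in $G$; hence $a_3(G) \geq a_3(H)$, and Theorem~\ref{a3Formula} is invoked only once, to compute $a_3(K_n - e) = n-2$. This monotonicity argument handles the connected and disconnected cases uniformly in a few lines. Your argument instead runs the structural classification of Theorem~\ref{Degree3AcycPoly} through an extremal optimization: checking $\overline{K_3}$ and $K_1 \cup K_{n-1}$ directly, and in the connected case bounding $\sum e_i^2 \leq \left(\sum e_i\right)^2 = m^2$ (the concentration extreme, correctly identified as the mirror image of the equalizing Lemma~\ref{Lemma-EqualEdges} used for the \emph{upper} bound in Theorem~\ref{a3UpperBound}) and then a one-variable monotonicity check of $g(m) = \left(n-\tfrac{3}{2}\right)m - \tfrac{1}{2}m^2$ on $[1, n-2]$, where your range for $m$ and the computation $g'(m) \geq \tfrac{1}{2}$ are both right. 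What your longer route buys is strictly more information: a clean characterization of equality, showing $K_n - e$ is the unique minimizer among connected graphs (with the honest caveat about the $n=3$ coincidences such as $\overline{K_3}$), which the paper's counting inequality $a_3(G) \geq a_3(H)$ does not directly deliver; what the paper's route buys is brevity and a transferable principle (edge-deletion monotonicity of the acyclic complex) that needs no classification theorem at all.
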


\begin{Proof}
Let $G$ be an arbitrary graph of order $n \geq 3$ with acyclic dimension three. Consider as well $H = K_n-e$. The complement of $H$ is the disjoint union of $K_2$ and $n-2$ independent vertices. So, following from Theorem~\ref{Degree3AcycPoly}, $\AC(H,x)$ has degree three. From Theorem~\ref{a3Formula}, the coefficient of $x^3$ in $\AC(H,x)$ is $n-2$. Now clearly $G$ is a spanning subgraph of $H$. Removing an edge does not change an acyclic subset to a cyclic subset, so any acyclic subset of $H$ is also an acyclic subset of $G$. Thus, $G$ has at least as many acyclic subsets has $H$. Therefore, $a_3 \geq n-2$.
\end{Proof}

We are now ready to explore the roots of acyclic polynomials of degree three. We know at least one root of $\AC(G,x)$ lies on the real axis and, for $n > 3$, there is only one such root.

\begin{Theorem}
\label{Thm-CubicDiscriminant}
If $G$ has $n>3$ vertices and has acyclic dimension $3$, then $\AC(G,x)$ has one real acyclic root and two nonreal acyclic roots.
\end{Theorem}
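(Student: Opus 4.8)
The plan is to reduce the claim to a single sign computation on the discriminant of a cubic. Since $G$ has acyclic dimension $3$ we may write $\AC(G,x) = a_3 x^3 + \binom{n}{2} x^2 + n x + 1$ with $a_3 > 0$. Recall that a cubic with real coefficients, $a x^3 + b x^2 + c x + d$ with $a \ne 0$, has exactly one real root and a pair of complex-conjugate non-real roots precisely when its discriminant
$$\Delta = 18abcd - 4b^3 d + b^2 c^2 - 4 a c^3 - 27 a^2 d^2$$
is negative. So it suffices to prove that $\Delta < 0$ for every degree-$3$ acyclic polynomial on $n > 3$ vertices.

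The key idea is to treat $\Delta$ not as a quantity attached to a particular graph, but as a quadratic polynomial in the lone free parameter $a_3$, with $n$ held fixed. Substituting $b = \binom{n}{2}$, $c = n$, $d = 1$ and collecting terms, I expect
$$\Delta(a_3) = -27 a_3^2 + n^2(5n - 9)\, a_3 + \tfrac{1}{4} n^3 (n-1)^2 (2 - n).$$
This is a downward-opening parabola in $a_3$ (the leading coefficient is $-27 < 0$), so to establish $\Delta(a_3) < 0$ for \emph{all} real $a_3$ it is enough to check that this quadratic has no real zeros, that is, that its own discriminant $n^4(5n-9)^2 + 27 n^3(n-1)^2(2-n)$ is negative.

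The heart of the argument is the algebraic simplification of this last quantity. Factoring out $n^3$ and expanding, I expect it to collapse to
$$n^3 \big[\, n(5n-9)^2 + 27(n-1)^2(2-n) \,\big] = -2 n^3 (n-3)^3,$$
which is strictly negative exactly when $n > 3$. The vanishing at $n = 3$ is a useful sanity check: there the only degree-$3$ graph, $\overline{K_3}$, has $\AC(\overline{K_3},x) = (1+x)^3$ with a triple real root, so non-real roots cannot be forced and the hypothesis $n > 3$ is genuinely needed. Granting the identity, the parabola $\Delta(a_3)$ never meets the horizontal axis and therefore stays strictly below it, so $\Delta < 0$ for every admissible value of $a_3$; in particular $\Delta < 0$ for the acyclic polynomial of $G$, and the cubic has one real root and two non-real conjugate roots.

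I expect the main obstacle to be purely computational: verifying the factorization $n(5n-9)^2 + 27(n-1)^2(2-n) = -2(n-3)^3$, which is the one step that is not mechanical bookkeeping. It is worth emphasizing that this route uses none of the explicit bounds $n - 2 \le a_3 \le f(n)$ from Theorems~\ref{Minimuma3} and~\ref{a3UpperBound}; the sign of $\Delta$ turns out to be independent of which degree-$3$ graph on $n$ vertices is chosen, so the conclusion holds uniformly over all such $G$.
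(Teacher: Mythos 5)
Your proof is correct, and at its core it is the same argument as the paper's: both treat the discriminant $\Delta$ of $a_3x^3 + \binom{n}{2}x^2 + nx + 1$ as a downward-opening quadratic in $a_3$ and show it is negative for \emph{every} real $a_3$, so that no bounds on $a_3$ (such as Theorems~\ref{Minimuma3} and~\ref{a3UpperBound}) are needed. Where you differ is only in how the negativity is certified. The paper maximizes $\Delta(a_3)$ by calculus, evaluates at the critical point $a_3 = \frac{5}{54}n^3 - \frac{1}{6}n^2$ to obtain $-\frac{1}{54}n^6 + \frac{1}{6}n^5 - \frac{1}{2}n^4 + \frac{1}{2}n^3$, and then needs an asymptotic estimate for $n \geq 9$ plus a finite verification for $n \in \{4,\ldots,8\}$. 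You instead take the discriminant of the quadratic and factor: your identity $n(5n-9)^2 + 27(n-1)^2(2-n) = -2(n-3)^3$ does check out (both sides expand to $-2n^3 + 18n^2 - 54n + 54$), and in fact it is precisely the factored form of what the paper computed, since the paper's maximum value equals $-\frac{n^3(n-3)^3}{54}$. Your closed form buys a uniform argument with no case analysis and makes visible why the hypothesis $n>3$ is sharp. One small slip in an aside: $\overline{K_3}$ is not the only order-$3$ graph of acyclic dimension $3$ (by Theorem~\ref{Degree3AcycPoly}, $K_1 \cup K_2$ and $P_3$ also qualify), but all of these are forests with acyclic polynomial $(1+x)^3$ and a triple real root, so your sanity check at $n=3$ stands as stated.
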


\begin{Proof}
For a general cubic, $ax^3+bx^2+cx+d$, the discriminant is defined to be $\Delta = 18abcd - 4b^3d + b^2c^2 - 4ac^3 - 27a^2d^2$. If $\Delta > 0$, then the cubic has three distinct real roots, and if $\Delta < 0$, the cubic has one real root and two nonreal roots (see, for example, \cite{irving}). We will show that the discriminant of $\AC(G,x)$ is negative, from which the result follows.

Let
\begin{equation} \label{CubicEquation}
	\AC(G,x) = ax^3+\frac{n(n-1)}{2}x^2 + nx + 1
\end{equation}
with fixed $n>3$. Then the discriminant of $\AC(G,x)$ can be calculated to be
\begin{equation*}
\begin{split}
	\Delta(a) &= 18a\frac{n(n-1)}{2}n - 4\left(\frac{n(n-1)}{2}\right)^3 + \left(\frac{n(n-1)}{2}\right)^2n^2 - 4an^3 - 27a^2\\
%	& = 9a(n^3-n^2) - \frac{n^6-3n^5+3n^4-n^3}{2} + \frac{n^6-2n^5+n^4}{4}\\
%	&\qquad -4an^3-27a^2\\
	& = -\frac{n^6}{4} + n^5 - \frac{5n^4}{4} + \left(5a + \frac{1}{2}\right)n^3 - 9an^2 - 27a^2.
\end{split}
\end{equation*}
To show that $\Delta(a)$ is negative, observe that its derivative with respect to $a$,
\begin{equation*}
	\Delta'(a) = 5n^3 - 9n^2 - 54a,
\end{equation*}
is negative when $a > \frac{5}{54}n^3 - \frac{1}{6}n^2$ and positive when  $a < \frac{5}{54}n^3 - \frac{1}{6}n^2$. Thus $\Delta$ is increasing to the left of $\frac{5}{54}n^3 - \frac{1}{6}n^2$ and decreasing to the right. Furthermore, at $a = \frac{5}{54}n^3 - \frac{1}{6}n^2$,
\begin{equation*}
\begin{split}
	\Delta &= -\frac{n^6}{4} + n^5 - \frac{5n^4}{4} + \left(5\left(\frac{5}{54}n^3 - \frac{1}{6}n^2\right) + \frac{1}{2}\right)n^3\\
	&\qquad - 9\left(\frac{5}{54}n^3 - \frac{1}{6}n^2\right)n^2 - 27\left(\frac{5}{54}n^3 - \frac{1}{6}n^2\right)^2\\
%	& = -\frac{n^6}{4} + n^5 - \frac{5n^4}{4} + \frac{25}{54}n^6 - \frac{5}{6}n^5 + \frac{1}{2}n^3 - \frac{5}{6}n^5\\
%	&\qquad + \frac{3}{2}n^4 - \frac{25}{108}n^6 + \frac{5}{6}n^5 - \frac{3}{4}n^4\\
	& = -\frac{1}{54}n^6 + \frac{1}{6}n^5 - \frac{1}{2}n^4 + \frac{1}{2}n^3.
\end{split}
\end{equation*}
For $n\geq9$,
	$\Delta \leq -\frac{1}{6}n^5 + \frac{1}{6}n^5 - \frac{1}{2}n^4 + \frac{1}{2}n^3	 < 0$.
\iffalse
\begin{equation}
\begin{split}
	\Delta &\leq -\frac{1}{6}n^5 + \frac{1}{6}n^5 - \frac{1}{2}n^4 + \frac{1}{2}n^3\\
	& < 0.
\end{split}
\end{equation}
\fi
A quick check verifies that $\Delta$ is negative for $n = 4, \ldots,8$ as well. Since $a = \frac{5}{54}n^3 - \frac{1}{6}n^2$ is a maximum, it follows that $\Delta(a)$ is negative for all $a$ and for all $n \geq 4$. As the discriminant of $\AC(G,x)$ is negative, we conclude that $\AC(G,x)$ has one real root and two nonreal roots.
\end{Proof}

Figure~\ref{acyclicdegree3-order12} seems to suggest that all of the acyclic roots (real or otherwise) are in the left half-plane. Of course the real acyclic root of any acyclic polynomial is negative (as the polynomial has positive coefficients and hence is positive on the positive real axis), but what about the location of the two nonreal roots for acyclic polynomials of graphs of acyclic dimension three? A polynomial is said to be {\em stable} if all its roots lie in the left half-plane. To prove the stability of acyclic polynomials of degree three,  so we will use the Hermite-Biehler Theorem. To state this theorem, we first define a few terms.  A polynomial is {\em real} if each of its coefficients is real.
Given a real polynomial $f(x) = \sum b_ix^i$, then {\em even} and {\em odd} polynomials $f_e$ and $f_o$, respectively, are given by $f_e(x) = \sum b_{2i}x^i$ and $f_o(x) = \sum b_{2i+1}x^i$ (so that $f(x) = f_e(x^2)+ xf_o(x^2)$). A real polynomial is  {\em standard} if and only if it is identically $0$ or has positive leading coefficient. Finally, if
$\alpha_1 \leq \alpha_2 \leq \cdots \leq \alpha_k$ and $\beta_1 \leq \beta_2 \leq \cdots \leq \beta_\ell$ are reals, then the sequence $(\alpha_1, \alpha_2, \ldots , \alpha_k)$ {\em interlaces} the sequence $(\beta_1, \beta_2, \ldots , \beta_\ell)$ if either
\begin{enumerate}
	\item $k = \ell$ and $\alpha_1 \leq \beta_1 \leq \alpha_2 \leq \beta_2 \leq \cdots \leq \alpha_k \leq \beta_\ell$, or
	\item $k = \ell + 1$ and $\alpha_1 \leq \beta_1 \leq \alpha_2 \leq \beta_2 \leq \cdots \leq \beta_\ell \leq \alpha_k$.
\end{enumerate}

The Hermite-Biehler Theorem (see, for example, \cite{wagner}) states necessary and sufficient conditions for a real polynomial to be stable:

\begin{Theorem}[The Hermite-Biehler Theorem for Stability]
Define a standard polynomial to be a real polynomial with a positive leading coefficient (or the zero polynomial). Suppose $f(x)$ is standard.
Write $f(x)$ as
$f(x) = f_e(x^2) + xf_o(x^2)$.
Then $f(x)$ is stable if and only if the following hold
\begin{itemize}
	\item $f_e$ and $f_o$ are standard
	\item both $f_e$ and $f_o$ have all real, nonpositive roots
	\item the roots of $f_o$ interlace the roots of $f_e$.
\end{itemize}
\end{Theorem}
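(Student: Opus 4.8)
The plan is to prove the equivalence by transferring everything to the imaginary axis and invoking the argument principle. Since $f$ is a real polynomial, I would substitute $x = it$ for $t \in \mathbb{R}$ and use the given decomposition to write
\[ f(it) = f_e(-t^2) + i\,t\,f_o(-t^2) =: P(t) + i\,Q(t), \]
so that $P(t) = f_e(-t^2)$ is an even real polynomial and $Q(t) = t\,f_o(-t^2)$ is an odd real polynomial. This single substitution is what ties the three bulleted conditions to the geometry of the curve $t \mapsto f(it)$: a root $y_0 \leq 0$ of $f_e$ produces the pair of real zeros $t = \pm\sqrt{-y_0}$ of $P$, so the condition that ``$f_e$ and $f_o$ have all real, nonpositive roots'' is exactly the statement that $P$ and $Q$ factor into real linear factors, and the interlacing of the roots of $f_o$ and $f_e$ becomes, under the order-reversing fold $y = -t^2$, an interlacing of the real zeros of $Q$ and $P$ on the $t$-axis.

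Next I would establish the standard stability criterion in the form: a standard real polynomial $f$ of degree $n$ with no zeros on the imaginary axis is stable if and only if the continuous argument of $f(it)$ is strictly increasing in $t$, with total increase $n\pi$ as $t$ runs from $-\infty$ to $+\infty$. This follows from the factorization $f(x) = c\prod_j (x - \rho_j)$: as $t$ increases, the vector from a fixed root $\rho_j$ to the moving point $it$ sweeps out a net angle of $+\pi$ precisely when $\mathrm{Re}\,\rho_j < 0$ and $-\pi$ when $\mathrm{Re}\,\rho_j > 0$, so the total is $n\pi$ exactly when every root lies in the open left half-plane, and monotonicity of $\arg f(it)$ forces each factor to rotate in the positive sense. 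I would then combine this with the classical planar-curve fact that the argument of $P(t) + iQ(t)$ is strictly monotone if and only if $P$ and $Q$ have only real, simple zeros that interlace and the quantity $P Q' - P' Q$ keeps a constant sign; the sign is fixed by the requirement that $f_e$ and $f_o$ be standard (positive leading coefficients), which guarantees the rotation is in the correct direction.

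Finally I would translate the interlacing of the zeros of $P$ and $Q$ back to the stated interlacing of the roots of $f_o$ and $f_e$ via $y = -t^2$. Because $P$ is even and $Q$ is odd, their nonzero real zeros occur in symmetric $\pm$ pairs, and the zero of $Q$ at the origin is supplied by the explicit factor $t$; restricting to $t \geq 0$ and applying the order-reversing map recovers the two cases ($\deg f_e = \deg f_o$ or $\deg f_e = \deg f_o + 1$) of the interlacing definition according to the parity of $n$. I expect the main obstacle to be precisely this last translation: keeping the degree-and-parity bookkeeping, the role of the origin zero of $Q$, the standardness sign conditions, and the strict-versus-weak inequalities all exactly aligned so that the three bulleted conditions emerge verbatim. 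A secondary technical point requiring care is making the equivalence between monotone argument and simple interlacing zeros fully rigorous, including the limiting behaviour of $\arg f(it)$ as $t \to \pm\infty$ and the treatment of any zeros of $f$ on the imaginary axis, which must be excluded for stability.
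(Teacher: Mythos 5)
You should first note that the paper itself does not prove this theorem: it is quoted as a classical result with a pointer to \cite{wagner}, so there is no internal argument to compare against. Your outline is the standard classical proof (the phase-monotonicity, or Mikhailov/argument-principle, criterion on the imaginary axis), and its skeleton is sound: the identification $f(it)=P(t)+iQ(t)$ with $P(t)=f_e(-t^2)$ even and $Q(t)=tf_o(-t^2)$ odd, the $n\pi$ total phase count from the factorization of $f$, and the translation of interlacing through the fold $y=-t^2$ all work as you describe.

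There is, however, one genuine mismatch between what your method proves and the statement as given. Your criterion --- strict monotonicity of $\arg f(it)$ with zeros on the imaginary axis excluded --- characterizes roots in the \emph{open} left half-plane, which corresponds to \emph{strict} interlacing, simple zeros of $P$ and $Q$, and no common zeros of $f_e$ and $f_o$. The theorem as stated (following Wagner's convention) is the closed-half-plane, quasi-stability version: the interlacing inequalities are weak ($\leq$), roots of $f_e$ and $f_o$ may equal $0$, and $f_o$ may even be the zero polynomial. Concretely, $f(x)=(x+1)(x^2+1)=x^3+x^2+x+1$ has $f_e(y)=f_o(y)=y+1$, so all three bullets hold (with the weak interlacing $-1\leq -1$), yet $f$ has the purely imaginary roots $\pm i$; your proposal would reject it, and your closing remark that imaginary-axis zeros ``must be excluded for stability'' contradicts the statement being proved. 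The repair is a reduction you should add up front: a common root $y_0\leq 0$ of $f_e$ and $f_o$ is equivalent to the factorization $f(x)=(x^2-y_0)\left(\tilde f_e(x^2)+x\tilde f_o(x^2)\right)$, where $x^2-y_0$ contributes only the closed-half-plane roots $\pm i\sqrt{-y_0}$; iterating this (together with the degenerate cases $f_e(0)=0$, which yields a factor $x$, and $f_o\equiv 0$) reduces the weak statement to the strict one your argument actually establishes. Finally, the ``parity bookkeeping'' you flagged as the main obstacle is a real wrinkle: for even $\deg f$ a stable $f$ has $\deg f_o=\deg f_e-1$, so the sequence of roots of $f_o$ is one \emph{shorter} than that of $f_e$ and the interlacing must be read with the roles of the two sequences arranged accordingly (the paper only ever applies the theorem in degree $3$, where $\deg f_e=\deg f_o=1$ and the issue does not arise).
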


\begin{Theorem}\label{roots2}
If $\AC(G,x)$ is an acyclic polynomial with degree three, then $\AC(G,x)$ is stable.
\end{Theorem}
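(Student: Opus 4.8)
The plan is to apply the Hermite-Biehler Theorem directly to the explicit form of the polynomial. By Theorem~\ref{Thm-CubicDiscriminant} and the preceding discussion, any acyclic polynomial of degree three has the shape
\[ \AC(G,x) = a x^3 + \tfrac{n(n-1)}{2} x^2 + n x + 1, \]
where $n \geq 3$ and $a = a_3 > 0$ (indeed $a \geq n-2$ by Theorem~\ref{Minimuma3}). Since the leading coefficient $a$ is positive, $\AC(G,x)$ is standard, so it suffices to verify the three bulleted conditions of the Hermite-Biehler Theorem.

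First I would extract the even and odd parts. Reading off the coefficients $b_0 = 1$, $b_1 = n$, $b_2 = \tfrac{n(n-1)}{2}$, $b_3 = a$, we obtain
\[ f_e(y) = 1 + \tfrac{n(n-1)}{2}\, y \qquad\text{and}\qquad f_o(y) = n + a\, y, \]
so that $\AC(G,x) = f_e(x^2) + x f_o(x^2)$. Both are standard, as their leading coefficients $\tfrac{n(n-1)}{2}$ and $a$ are positive. Each is linear, hence has a single real root, namely
\[ y_e = -\frac{2}{n(n-1)} \ \text{ for } f_e, \qquad y_o = -\frac{n}{a} \ \text{ for } f_o, \]
and both of these are negative. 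This settles the first two conditions.

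The remaining condition --- that the root of $f_o$ interlaces the root of $f_e$ --- is the crux. With one root each (so $k = \ell = 1$), interlacing reduces to the single inequality $y_o \leq y_e$, i.e.
\[ -\frac{n}{a} \leq -\frac{2}{n(n-1)}, \]
which rearranges to $a \leq \tfrac{1}{2} n^2 (n-1)$. The main point is therefore only to confirm this bound on the leading coefficient, but it is immediate: $a = a_3 \leq \binom{n}{3} = \tfrac{1}{6} n(n-1)(n-2) < \tfrac{1}{2} n^2(n-1)$ for every $n \geq 3$ (one could alternatively invoke the sharper estimate of Theorem~\ref{a3UpperBound}). With all three Hermite-Biehler conditions in hand, we conclude that $\AC(G,x)$ is stable. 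I expect the only step requiring genuine care is matching the direction of the interlacing inequality to the precise statement of the theorem and then verifying the needed bound on $a$; the underlying computation is routine.
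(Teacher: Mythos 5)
Your proof is correct, and it uses the same Hermite--Biehler framework as the paper, with the identical even/odd decomposition $f_e(y) = 1 + \frac{n(n-1)}{2}y$, $f_o(y) = n + a\,y$ and the same reduction of stability to the single interlacing inequality $-\frac{n}{a} \leq -\frac{2}{n(n-1)}$, equivalently $a \leq \frac{1}{2}n^2(n-1)$. You diverge from the paper's proof in two ways, both of which are simplifications. First, the paper splits off the disconnected graphs ($\overline{K_3}$ and $K_1 \cup K_{n-1}$) and verifies stability for them by computing their roots directly before assuming $G$ connected; your uniform treatment is legitimate, since $a_0 = 1$, $a_1 = n$ and $a_2 = \binom{n}{2}$ hold for every simple graph (any $2$-subset of vertices is acyclic), so the cubic has the stated shape regardless of connectivity, and your bound on the leading coefficient covers the disconnected cases as well. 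Second, to verify interlacing the paper invokes the refined upper bound of Theorem~\ref{a3UpperBound} and then runs a somewhat delicate estimate to conclude $\frac{n(n-1)}{2} - \frac{a_3}{n} > 0$; your observation that the trivial bound $a_3 \leq \binom{n}{3} = \frac{n(n-1)(n-2)}{6} < \frac{1}{2}n^2(n-1)$ (valid for all $n \geq 3$, since $\frac{n-2}{6} < \frac{n}{2}$) already suffices is cleaner and shows the refined bound is overkill for stability, though it is genuinely needed elsewhere in the paper (e.g., in the proof of Theorem~\ref{roots1}). A cosmetic remark: you do not need $a_3 \geq n-2$ from Theorem~\ref{Minimuma3} at all; $a_3 \geq 1$ is all that standardness and the negativity of the roots of $f_e$ and $f_o$ require.
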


\begin{Proof}
First, if $G$ is disconnected then either (i) $G = \overline{K_3}$, $\AC(G,x) = (1+x)^3$ and the only root is $-1$, or (ii) $G = K_1 \cup K_{n-1}$, $\AC(G,x) = (1+x)(1 + (n-1)x + \frac{(n-1)(n-2)}{2}x^2)$ and the roots have real part either $-1$ and or $-1/(n-2)$. Thus in either case, the roots are all in the left half-plane, so $\AC(G,x)$ is stable.

Now we assume that $G$ is connected. Here, we will apply the \emph{Hermite-Biehler Theorem}. Since $\AC(G,x)$ has degree three, for some $a_3 \geq 1$,
\begin{equation*}
\begin{split}
	\AC(G,x) & = a_3x^3 + \frac{n(n-1)}{2}x^2 + nx + 1\\
	& = f_e(x^2) + xf_o(x^2)
\end{split}
\end{equation*}
where $f_e = \frac{n(n-1)}{2}x + 1$ and $f_o = a_3x + n$ are the \emph{even} and \emph{odd} parts of $f$, respectively. Both of these functions are standard, as is $f$.

Let $r_e$ and $r_o$ be the roots of $f_e$ and $f_o$ respectively. Then $r_e = -\frac{2}{n(n-1)}$ and $r_o=-\frac{n}{a_3}$. Notice that both $r_e$ and $r_o$ are real and negative. Furthermore, by Theorem~\ref{a3UpperBound},
\begin{equation*}
\begin{split}
	\frac{n(n-1)}{2} - \frac{a_3}{n} & \geq \frac{n(n-1)}{2} - \frac{n^2 - \frac{n^2}{\sqrt{2n-2}} - \frac{n\sqrt{2n-2}}{2} - \frac{n}{2} + \frac{n}{\sqrt{2n-2}}}{n}\\
	& = \frac{n^2}{2} - \frac{n}{2} - n + \frac{n}{\sqrt{2n-2}} + \frac{\sqrt{2n-2}}{2} + \frac{1}{2} - \frac{1}{\sqrt{2n-2}}.
\end{split}
\end{equation*}
Since $n \geq 3$, it is clear that $\frac{n}{\sqrt{2n-2}} > \frac{1}{\sqrt{2n-2}}$. Hence,
\begin{equation*}
\begin{split}
	\frac{n(n-1)}{2} - \frac{a_3}{n} & > \frac{n^2}{2} - \frac{3n}{2} + \frac{\sqrt{2n-2}}{2} + \frac{1}{2}\\
	& > \frac{n^2}{2} - \frac{3n}{2}\\
	& \geq 0.
\end{split}
\end{equation*}
%Thus
%\begin{equation}
%$-\frac{a_3}{n} > -\frac{n(n-1)}{2}$,
%\end{equation}
So $r_o < r_e$ and thus the roots of $f_o$ interlace the roots of $f_e$. Therefore, by the Hermite-Biehler Theorem for stability, $\AC(G,x)$ is stable.
\end{Proof}

What more can we say about the location of the acyclic roots of graphs of acyclic dimension three? From Figure~\ref{acyclicdegree3-order12}, we see that, in addition to being in the left half-plane, there are real acyclic roots far to the left, but the nonreal roots seem to be close to the origin. We shall make both of these observations more precise.

We begin with an observation about the roots of certain cubics that we can apply to acyclic polynomials.

\begin{Lemma} \label{RealRootMonotonic}
Let $a_3,a_3',a_2,a_1$, and $a_0$ be positive. Suppose
\begin{equation*}
\begin{split}
	f(x) & = a_3x^3 + a_2x^2 + a_1x + a_0\\
	g(x) & = a_3'x^3 + a_2x^2 + a_1x + a_0
\end{split}
\end{equation*}
have unique real roots $r$ and $r'$ respectively. If $a_3>a_3'$, then $r>r'$.
\end{Lemma}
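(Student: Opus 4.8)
The plan is to compare the two cubics by evaluating $g$ at the real root $r$ of $f$ and then exploiting the sign structure forced by the positivity of all the coefficients. First I would observe that because $a_3,a_2,a_1,a_0$ are all positive, we have $f(x) \geq a_0 > 0$ for every $x \geq 0$; hence the unique real root $r$ of $f$ must satisfy $r < 0$. The identical argument applied to $g$ (whose lower-order coefficients coincide with those of $f$ and whose leading coefficient $a_3'$ is also positive) gives $r' < 0$ as well. Recording that both roots are negative is what makes the subsequent sign computation go through.

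The key step is to subtract the two polynomials at the point $x = r$. Since $f(r) = 0$ and the two polynomials differ only in their cubic coefficient,
\[
g(r) = g(r) - f(r) = (a_3' - a_3)\,r^3.
\]
By hypothesis $a_3' - a_3 < 0$, and $r^3 < 0$ because $r < 0$, so the product of these two negative quantities is positive; that is, $g(r) > 0$.

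It then remains to convert the inequality $g(r) > 0$ into the desired conclusion $r > r'$. Here I would use the hypothesis that $g$ has a unique real root: since $g$ is a cubic with positive leading coefficient $a_3'$ and only one real zero, its other two roots form a complex-conjugate pair, so $g$ factors as $g(x) = a_3'(x - r')(x^2 + px + q)$ where the quadratic factor has negative discriminant and is therefore strictly positive for all real $x$. Consequently the sign of $g(x)$ agrees with the sign of $x - r'$ everywhere on the real line. From $g(r) > 0$ we conclude $r - r' > 0$, i.e.\ $r > r'$.

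I do not expect a serious obstacle. The only point requiring genuine care is the final sign analysis, where one must invoke the uniqueness of the real root to guarantee that the complementary quadratic factor is positive-definite and hence does not interfere with the sign of $g$; without that, the sign of $g(x)$ would not be pinned down by $x - r'$ alone. Everything else is a one-line subtraction together with the elementary observation that a cubic with positive coefficients has its real root on the negative axis.
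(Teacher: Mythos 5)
Your argument is correct and is essentially the paper's own proof in mirror image: the paper writes $f(x) = g(x) + (a_3 - a_3')x^3$ and evaluates $f$ at $r'$, obtaining $f(r') = (a_3 - a_3')(r')^3 < 0$ and concluding $r' < r$ because $f$ changes sign from negative to positive only at its unique real root $r$, while you perform the same one-line subtraction in the other direction, evaluating $g$ at $r$ and reading off the sign of $g$ from its factorization. The only cosmetic caveat is that if ``unique real root'' is read as allowing a triple root, your complementary quadratic factor is $(x - r')^2$ rather than strictly positive definite; but since $g(r) > 0$ forces $r \neq r'$, the sign conclusion survives unchanged, and the paper's informal ``unique sign change'' step carries exactly the same latitude.
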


\begin{Proof}
Note that $r$ and $r'$ must be negative, and
\begin{equation*}
\begin{split}
	f(x) & = a_3x^3 + a_2x^2 + a_1x + a_0\\
	& = a_3'x^3 + a_2x^2 + a_1x + a_0 + a_3x^3 - a_3'x^3\\
	& = g(x) + (a_3-a_3')x^3
\end{split}
\end{equation*}
Since $r$ is the only real root of $f(x)$ and $a_3, \ldots, a_0$ are all positive, $r$ is the unique place where $f(x)$ changes from negative to positive. Similarly, $r'$ is the unique place where $g(x)$ changes from negative to positive. Thus,
$f(r') = g(r') + (a_3-a_3')(r')^3 = (a_3-a_3')(r')^3 < 0$,
\iffalse
\begin{equation*}
\begin{split}
	f(r') & = g(r') + (a_3-a_3')(r')^3\\
	& = (a_3-a_3')(r')^3\\
	& < 0,
\end{split}
\end{equation*}
\fi
since $a_3 > a_3'$. Therefore, $r > r'$.
\end{Proof}

We will use Lemma~\ref{RealRootMonotonic} to explain the behaviour of the nonreal roots of acyclic polynomials with degree three as the order of the graph increases.

\begin{Theorem}\label{roots1}
Suppose $n \geq 4$ and $s,\overline s$ are the nonreal roots of
$$ \AC(G,x) = a_3x^3 + \frac{n(n-1)}{2}x^2 + nx + 1, $$
where $\AC(G,x)$ is an acyclic polynomial with $a_3 \geq 1$. Then, as $n$ increases, $s$ and its conjugate $\overline s$ go to zero.
\end{Theorem}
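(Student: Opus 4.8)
The plan is to read the modulus of the nonreal roots directly off Vieta's formulas and then pin down the real root sharply enough. Write $\AC(G,x) = a_3 x^3 + \tfrac{n(n-1)}{2}x^2 + nx + 1$ and, using Theorem~\ref{Thm-CubicDiscriminant} together with the positivity of the coefficients, let its unique real root be $r = -t_0$ with $t_0 > 0$. Since the product of the three roots equals $-1/a_3$ and $s\overline{s} = |s|^2$, we obtain $r\,|s|^2 = -1/a_3$, that is, $|s|^2 = \tfrac{1}{a_3 t_0}$. Hence the assertion that $s,\overline s \to 0$ as $n \to \infty$ is equivalent to $a_3 t_0 \to \infty$, and the whole problem reduces to locating the real root.

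First I would identify the correct scale for $t_0$: dropping the lower-order terms in $a_3 x^3 + \tfrac{n(n-1)}{2}x^2 \approx 0$ suggests $t_0 \approx t_* := \tfrac{n(n-1)}{2a_3}$, the value for which $a_3 t_* = \tfrac{n(n-1)}{2}$. To make this rigorous I would study $h(t) := -\AC(G,-t) = a_3 t^3 - \tfrac{n(n-1)}{2}t^2 + nt - 1$, whose unique positive root is exactly $t_0$ and which satisfies $h(t) < 0$ for $0 < t < t_0$ and $h(t) > 0$ for $t > t_0$. Using $a_3 t_* = \tfrac{n(n-1)}{2}$ one computes $h(\lambda t_*) = \tfrac{n(n-1)}{2}t_*^2\lambda^2(\lambda - 1) + n\lambda t_* - 1$; at $\lambda = 1$ this equals $n t_* - 1 > 0$, so $t_0 < t_*$, while at $\lambda = \tfrac12$ it equals $-\tfrac{n(n-1)t_*^2}{16} + \tfrac{n t_*}{2} - 1$, which is negative as soon as $t_* \geq \tfrac{8}{n-1}$, forcing $t_0 > t_*/2$.

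The only external input needed is an upper bound on $a_3$: the inequality $t_* \geq \tfrac{8}{n-1}$ is precisely $a_3 \leq \tfrac{n(n-1)^2}{16}$, and since Theorem~\ref{a3UpperBound} gives $a_3 \leq f(n) < n^2$, this holds for all large $n$. Consequently $t_0 > t_*/2 = \tfrac{n(n-1)}{4 a_3}$, so $a_3 t_0 > \tfrac{n(n-1)}{4}$ and therefore $|s|^2 = \tfrac{1}{a_3 t_0} < \tfrac{4}{n(n-1)} \to 0$, which yields $s,\overline s \to 0$.

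The main obstacle is that $a_3$ is not fixed: as $G$ varies it ranges (by Theorems~\ref{Minimuma3} and~\ref{a3UpperBound}) over roughly $[n,\,n^2]$, and correspondingly the real root $-t_0$ slides monotonically (Lemma~\ref{RealRootMonotonic}) from about $-n/2$ down to about $-1/2$. Thus no single constant bounds $t_0$ from below, and crude estimates such as $t_0 > 1/n$ only give $a_3 t_0 \gtrsim 1$, which is too weak. The resolution is to measure $t_0$ on the correct, $a_3$-dependent scale $t_*$: establishing $t_0 > t_*/2$ produces the clean, uniform estimate $|s|^2 < 4/(n(n-1))$ no matter where $a_3$ sits in its range.
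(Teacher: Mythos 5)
Your proof is correct, and it takes a genuinely different route from the paper's. The paper compares $\AC(G,x)$ with the extremal cubic whose leading coefficient is the upper bound $f(n)$ of Theorem~\ref{a3UpperBound}, invokes Lemma~\ref{RealRootMonotonic} to transfer root location, evaluates that comparison cubic at $-\tfrac{1}{2}$ to get the uniform bound $r < -\tfrac{1}{2}$ (for $n \geq 42$), and then combines the same Vieta identity $|s|^2 = 1/(|r|\,a_3)$ with the \emph{lower} bound $a_3 \geq n-2$ of Theorem~\ref{Minimuma3} to conclude $|s| < \sqrt{2/(n-2)}$. You instead measure the real root on the $a_3$-dependent scale $t_* = n(n-1)/(2a_3)$ and sandwich $t_0 \in (t_*/2,\,t_*)$, so that the product $a_3 t_0$ is pinned near $n(n-1)/2$ no matter where $a_3$ sits in its range $[n-2,\,f(n)]$; this dispenses entirely with Theorem~\ref{Minimuma3} and Lemma~\ref{RealRootMonotonic}, treats the connected and disconnected cases uniformly (the paper handles $K_1 \cup K_{n-1}$ separately, whereas Theorems~\ref{Thm-CubicDiscriminant} and~\ref{a3UpperBound} cover it within your argument), and yields the sharper estimate $|s|^2 < 4/(n(n-1)) = O(n^{-2})$ versus the paper's $O(n^{-1})$. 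Note that Theorem~\ref{a3UpperBound} is genuinely needed on your route too: the trivial bound $a_3 \leq \binom{n}{3}$ grows like $n^3/6$ and would violate your hypothesis $a_3 \leq n(n-1)^2/16$ for large $n$, while $a_3 \leq f(n) < n^2$ satisfies it once $n \geq 18$ --- harmless for the asymptotic statement, but worth recording explicitly. Two small polish points: your assertion $nt_*-1>0$ at $\lambda = 1$ deserves a half-line of justification ($t_* > 1/n$ is equivalent to $a_3 < n^2(n-1)/2$, which follows from $a_3 < n^2$ when $n \geq 4$), although the resulting inequality $t_0 < t_*$ is only scale-setting and is never used in the conclusion, so it could be dropped.
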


\begin{Proof}
We can assume that $G$ is connected (as otherwise $G$ is $K_1 \cup K_{n-1}$ and its two nonreal acyclic roots are easily seen to head to $0$).
Suppose $r$ is the real root of $\AC(G,x)$ and let $r'$ be the real root of
\begin{equation*}
\begin{split}
	f(x) &= \left(n^2 - \frac{n^2}{\sqrt{2n-2}} - \frac{n\sqrt{2n-2}}{2} - \frac{n}{2} + \frac{n}{\sqrt{2n-2}}\right)x^3 + \frac{n(n-1)}{2}x^2 + nx + 1.
\end{split}
\end{equation*}
As $f(x)$ is of the same form as Equation~(\ref{CubicEquation}) and $n > 3$, by the same reasoning in Theorem~\ref{Thm-CubicDiscriminant}, $r'$ is unique.
From Theorem~\ref{a3UpperBound}, we know
\begin{equation*}
	a_3 \leq n^2 - \frac{n^2}{\sqrt{2n-2}} - \frac{n\sqrt{2n-2}}{2} - \frac{n}{2} + \frac{n}{\sqrt{2n-2}}.
\end{equation*}
Thus, from Lemma~\ref{RealRootMonotonic} it follows that $r<r'$. Furthermore, since $r'$ is unique, $f(x)$ will be negative for $x<r'$. At $x=-\frac{1}{2}$,
\begin{equation*}
\begin{split}
	f\left(-\frac{1}{2}\right) & = \left(n^2 - \frac{n^2}{\sqrt{2n-2}} - \frac{n\sqrt{2n-2}}{2} - \frac{n}{2} + \frac{n}{\sqrt{2n-2}}\right)\left(-\frac{1}{2}\right)^3\\\\
	& \qquad + \frac{n(n-1)}{2}\left(-\frac{1}{2}\right)^2 + n\left(-\frac{1}{2}\right) + 1\\
%	& = -\frac{n^2}{8} + \frac{n^2}{8\sqrt{2n-2}} + \frac{n\sqrt{2n-2}}{16} + \frac{n}{16} - \frac{n}{8\sqrt{2n-2}}\\
%	& \qquad + \frac{n^2}{8} - \frac{n}{8} - \frac{n}{2} + 1\\
	& = \frac{n^2}{8\sqrt{2n-2}} + \frac{n\sqrt{2n-2}}{16} - \frac{9n}{16} - \frac{n}{8\sqrt{2n-2}} + 1\\
\end{split}
\end{equation*}
For $n\geq42$, it is straightforward to verify that
\begin{equation*}
	f\left(-\frac{1}{2}\right) \geq \frac{42n}{8\sqrt{2n-2}} + \frac{n\sqrt{82}}{16} - \frac{9n}{16} - \frac{n}{8\sqrt{2n-2}} + 1 > 0
\end{equation*}
Hence for $n \geq 42$, $r < r' < -\frac{1}{2}$.

Now notice that $\AC(G,x)$ can be factored as follows:
%\begin{equation}
\[
\begin{split}
	\AC(G,x) & = a_3\left(x^3 + \frac{n(n-1)}{2a_3}x^2 + \frac{n}{a_3}x + \frac{1}{a_3}\right)\\
%	& = a_3(x-r)(x-s)(x-\overline s)\\
	& = a_3\left(x^3 + (-r-s-\overline s)x^2 + (rs + r\overline s + s\overline s)x - rs\overline s\right).
\end{split}	
\]
%\end{equation}
So, $-rs\overline s = \frac{1}{a_3}$. From Theorem~\ref{Minimuma3}, $a_3 \geq n -2$,
and thus $|s|  = \sqrt{\frac{1}{|r|a_3}}	 < \sqrt{\frac{2}{n-2}}$.
\iffalse
\begin{equation}
\begin{split}
	|s| & = \sqrt{\frac{1}{|r|a_3}}\\
	& < \sqrt{\frac{2}{n-2}}.
\end{split}
\end{equation}
\fi
As $n \rightarrow \infty$, $\sqrt{\frac{2}{n-2}}\rightarrow 0$. Hence, $s$ and $\overline s \rightarrow 0$.
\end{Proof}

We can provide, depending on $n$, an annulus that contains the acyclic roots of graphs with acyclic dimension three.
To do so, we will use the well known  \emph{Enestr\"{o}m-Kakeya Theorem}~\cite{enestrom,kakeya}.

\begin{Theorem}[The Enestr\"{o}m-Kakeya Theorem]
Let $f(x) = a_nx^n + \cdots + a_1x^1 + a_0x^0$ be a real polynomial with degree greater than one.
If $r$ is a root of $f(x)$, then
\[
\min\left\{\frac{a_0}{a_1}, \frac{a_1}{a_2}, \ldots, \frac{a_{n-1}}{a_n}\right\}
< |r| < \max\left\{\frac{a_0}{a_1}, \frac{a_1}{a_2}, \ldots, \frac{a_{n-1}}{a_n}\right\}.
\]
\iffalse
\begin{itemize}
	\item $|r| < \mbox{\textnormal{max}}\{\frac{a_0}{a_1}, \frac{a_1}{a_2}, \ldots, \frac{a_{n-1}}{a_n}\}$ and
	\item $|r| > \mbox{\textnormal{min}}\{\frac{a_0}{a_1}, \frac{a_1}{a_2}, \ldots, \frac{a_{n-1}}{a_n}\}$.
\end{itemize}
\fi
\end{Theorem}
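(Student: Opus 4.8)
The plan is to reduce the two-sided annulus bound to the classical \emph{monotone} case of the Eneström--Kakeya theorem and then obtain the lower bound from the upper bound by passing to the reciprocal polynomial. Throughout I work under the standing hypothesis that the coefficients are positive, $a_k>0$ (this is the only regime in which the ratios $a_{k-1}/a_k$ are positive and the displayed min/max bracket is meaningful, and it is precisely the situation for acyclic polynomials, since $a_0=1$ and $a_i>0$ for $0\le i\le\MF(G)$).

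First I would prove the monotone special case: if $0<b_0\le b_1\le\cdots\le b_n$, then every root $w$ of $g(w)=\sum_{k=0}^{n}b_kw^{k}$ satisfies $|w|\le 1$. The key device is to examine $(w-1)g(w)$, whose expansion telescopes to
\[
(w-1)g(w)=b_nw^{n+1}-\sum_{j=1}^{n}(b_j-b_{j-1})w^{j}-b_0 .
\]
For $|w|>1$, the triangle inequality together with $|w|^{j}\le|w|^{n}$ (for $1\le j\le n$) and the telescoping identity $\sum_{j=1}^{n}(b_j-b_{j-1})+b_0=b_n$ shows that the off-leading terms are bounded in modulus by $b_n|w|^{n}<b_n|w|^{n+1}$, so the leading term strictly dominates and $(w-1)g(w)\ne 0$; hence $g(w)\ne 0$ and every root lies in $|w|\le 1$.

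Next, for the upper bound on $f$, I would set $\lambda=\max\{a_0/a_1,\ldots,a_{n-1}/a_n\}$ and substitute $x=\lambda w$. Writing $f(\lambda w)=\sum_k(a_k\lambda^{k})w^{k}$ and putting $b_k=a_k\lambda^{k}$, the consecutive ratio $b_{k-1}/b_k=(a_{k-1}/a_k)/\lambda\le 1$, so the $b_k$ are nondecreasing; the monotone case then forces $|w|\le 1$, i.e.\ $|x|=\lambda|w|\le\lambda$ for every root $x$ of $f$. For the lower bound I would pass to the reciprocal polynomial $f^{\ast}(x)=x^{n}f(1/x)=\sum_{j=0}^{n}a_{n-j}x^{j}$, whose roots are exactly the reciprocals of the (nonzero, since $a_0>0$) roots of $f$. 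Applying the upper bound just proved to $f^{\ast}$, whose consecutive-ratio maximum is $\max_k a_{k+1}/a_k$, gives $|w|\le\max_k a_{k+1}/a_k$ for each root $w$ of $f^{\ast}$; taking reciprocals yields $|r|\ge\min_k a_k/a_{k+1}=\min\{a_0/a_1,\ldots,a_{n-1}/a_n\}$ for every root $r$ of $f$, and combining the two bounds produces the annulus.

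The step I expect to be the main obstacle is the \emph{strictness} asserted in the statement. The argument above naturally yields the nonstrict bracket $\min\le|r|\le\max$, and strict inequality genuinely fails in the degenerate case where all ratios $a_{k-1}/a_k$ coincide (for example $1+x+\cdots+x^{n}$, whose roots lie on $|x|=1$); there $\min=\max$ and the two-sided strict statement is vacuous in any case. To recover strictness I would track the inequalities in the telescoping estimate: equality $|w|=1$ in the monotone lemma forces every difference $b_j-b_{j-1}$ to vanish, i.e.\ the coefficients to be in geometric progression. Since the acyclic polynomials to which this theorem is applied below are not of that degenerate form, the strict two-sided bound holds for them, which is all that is needed.
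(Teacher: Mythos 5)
The paper never proves this theorem: it is quoted as a classical result with citations to Enestr\"{o}m, Kakeya and (for sharpness questions) \cite{ASV79}, so there is no in-paper argument to compare against and your proposal must stand on its own. Your main line is the standard proof and is correct as far as it goes: the positivity hypothesis you add is genuinely needed (the paper's statement omits it, and without it the min/max bracket is meaningless), the telescoped identity for $(w-1)g(w)$ and the dominance estimate for $|w|>1$ are right, and the scaling $x=\lambda w$ plus the reciprocal-polynomial trick correctly yield the non-strict annulus $\min\le|r|\le\max$. You are also right to single out strictness as the delicate point.

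The gap is in your strictness repair. Equality at $|w|=1$ does \emph{not} force every difference $b_j-b_{j-1}$ to vanish; what equality in the triangle inequality actually forces is phase alignment: $w^{n+1}=1$ together with $w^{j}=1$ for every $j$ with $b_j>b_{j-1}$. Concretely, $f(x)=1+x+x^2+2x^3+2x^4+2x^5$ has positive, non-geometric coefficients with consecutive ratios $1,1,\frac{1}{2},1,1$ (so $\min=\frac{1}{2}<1=\max$), yet $x=e^{2\pi i/3}$ is a root of modulus exactly $1=\max$, since $f(x)=(1+2x^3)(1+x+x^2)$. This shows simultaneously that your characterization of the equality case is false and that the strict inequalities in the theorem \emph{as the paper states it} fail in general even when $\min<\max$ --- the correct classical statement is non-strict, and the boundary analysis is precisely the subject of \cite{ASV79}. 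The repair is to use the alignment condition: if $a_0/a_1<\max$ (equivalently $b_1>b_0$ after scaling), then a boundary zero would satisfy $w^{1}=1$, i.e.\ $w=1$, impossible since $g(1)>0$; hence $|r|<\max$ strictly, and dually $a_{n-1}/a_n>\min$ gives $|r|>\min$ strictly. These side conditions do hold where the paper invokes the theorem (in Lemma~\ref{MinimumModulus} the Sperner bounds give $a_0/a_1=\frac{1}{n}$ strictly below the largest ratio, and $a_{d-1}/a_d>\frac{1}{n}$ once the degree $d$ exceeds $1$), so the downstream applications survive; but as written, your final sentence rests on a false lemma, and your example $1+x+\cdots+x^n$ with $\min=\max$ is not the only way strictness degenerates.
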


\begin{Lemma}\label{MinimumModulus}
Let $G$ be a graph with $n$ vertices.
If $r$ is a root of $\AC(G,x)$, then $\frac{1}{n} < |r|$.
Moreover, if $\AC(G,x)$ has degree three, then $|r| < \frac{n(n-1)}{2(n-2)}$.
\end{Lemma}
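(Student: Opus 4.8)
The plan is to apply the Enestr\"{o}m--Kakeya Theorem directly to the coefficients of $\AC(G,x)$, handling the general lower bound and the degree-three upper bound separately.

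For the lower bound $\frac{1}{n} < |r|$, I would first note that $a_0 = 1$ and $a_1 = n$ for every graph (as recorded earlier in the paper), so the ratio $\frac{a_0}{a_1} = \frac{1}{n}$ appears in the list of consecutive coefficient ratios. The Enestr\"{o}m--Kakeya Theorem states that any root $r$ satisfies $|r| > \min_i \frac{a_{i-1}}{a_i}$, so it suffices to show that $\frac{1}{n}$ is the smallest of these ratios, i.e.\ that $\frac{a_{i-1}}{a_i} \geq \frac{1}{n}$ for all relevant $i$. Equivalently, I must check $a_i \leq n\, a_{i-1}$ for each $i$ in the support of the polynomial. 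This follows immediately from the Sperner bound recorded in the introduction, namely $a_i \leq \left(\frac{n-i+1}{i}\right) a_{i-1}$, since $\frac{n-i+1}{i} \leq n$ holds for all $i \geq 1$ (as $n - i + 1 \leq n \leq ni$). Hence $\frac{1}{n}$ is the minimum ratio and the lower bound follows. One caveat: the theorem as stated requires degree greater than one, so I would dispose of the degenerate cases $K_1$ (degree one) and the trivial single-term situation at the outset, or simply assume $\MF(G) \geq 2$ here, which is automatic whenever the minimum ratio bound is nontrivial.

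For the upper bound when $\AC(G,x)$ has degree three, the polynomial is $a_3 x^3 + \frac{n(n-1)}{2} x^2 + n x + 1$, so the three consecutive ratios are $\frac{a_0}{a_1} = \frac{1}{n}$, $\frac{a_1}{a_2} = \frac{n}{\,n(n-1)/2\,} = \frac{2}{n-1}$, and $\frac{a_2}{a_3} = \frac{n(n-1)}{2 a_3}$. The Enestr\"{o}m--Kakeya upper bound gives $|r| < \max$ of these three quantities, so I would argue that $\frac{a_2}{a_3} = \frac{n(n-1)}{2a_3}$ is the largest. Using the lower bound $a_3 \geq n-2$ from Theorem~\ref{Minimuma3}, one sees $\frac{n(n-1)}{2a_3} \leq \frac{n(n-1)}{2(n-2)}$, which is exactly the claimed bound; but to conclude that the stated quantity $\frac{n(n-1)}{2(n-2)}$ is indeed the relevant maximum I only need it to dominate $\frac{1}{n}$ and $\frac{2}{n-1}$, which is clear for $n \geq 3$ since $\frac{n(n-1)}{2(n-2)}$ grows like $\frac{n}{2}$ while the others shrink.

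The main obstacle, such as it is, lies in getting the inequality direction and the comparison among ratios exactly right, and in reconciling the strict inequality in the desired conclusion with the bound $a_3 \geq n-2$: since Enestr\"{o}m--Kakeya yields $|r| < \max_i \frac{a_{i-1}}{a_i} = \frac{n(n-1)}{2a_3}$ and $a_3 \geq n-2$ gives $\frac{n(n-1)}{2a_3} \leq \frac{n(n-1)}{2(n-2)}$, chaining these produces $|r| < \frac{n(n-1)}{2(n-2)}$ as required. I would present the two bounds as separate paragraphs, invoking the Sperner bound for the lower estimate and Theorem~\ref{Minimuma3} for the upper one, and conclude by combining the explicit coefficient ratios with the extremal-ratio form of the Enestr\"{o}m--Kakeya Theorem.
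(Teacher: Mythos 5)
Your proposal is correct and takes essentially the same route as the paper: the Sperner bound $a_i \leq \left(\frac{n-i+1}{i}\right)a_{i-1}$ shows the minimum consecutive-coefficient ratio is $\frac{1}{n}$, and in the degree-three case the bound $a_3 \geq n-2$ from Theorem~\ref{Minimuma3} together with the comparison $\frac{n(n-1)}{2(n-2)} > \frac{2}{n-1} > \frac{1}{n}$ shows the maximum ratio is at most $\frac{n(n-1)}{2(n-2)}$, with the Enestr\"{o}m--Kakeya Theorem applied in both directions exactly as the paper does. Your added caveat about the degree-one case ($K_1$) is a small point of care the paper's proof leaves implicit, but otherwise the arguments coincide.
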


\begin{Proof}
As mentioned in the previous section, the Sperner bounds for complexes imply that
%$ia_i \leq (n-i+1)a_{i-1}$, so that
 $\frac{i}{n-i+1} \leq \frac{a_{i-1}}{a_i}$.
The term $\frac{i}{n-i+1}$ is at a minimum when $i=1$, so
%\begin{equation}
	$\frac{a_{i-1}}{a_i} \geq \frac{i}{n-i+1} \geq \frac{1}{n}$.
%\end{equation}
From the Enestr\"{o}m-Kakeya Theorem we concude that $|r| > \frac{1}{n}.$

Now suppose
%\begin{equation}
	$\AC(G,x) = 1 + nx +\frac{n(n-1)}{2}x^2 + a_3x^3$ has degree three.
%\end{equation}
%To prove that $|r|<\frac{n(n-1)}{2(n-2)}$, suppose that $G$ is an arbitrary graph with $n$ vertices whose acyclic polynomial has degree three.
From Theorem~\ref{Minimuma3}, $a_3 \geq n-2$, and so
%Also note that $n \geq 3$. So,
%\begin{equation}
	$\frac{a_2}{n-2} \geq \frac{a_2}{a_3}$.
%\end{equation}
Also, $n \geq 3$, and thus
\[	\frac{a_2}{n-2}  = \frac{n(n-1)}{2(n-2)}
	> \frac{2n}{n(n-1)} = \frac{a_1}{a_2}
	> \frac{1}{n} = \frac{a_0}{a_1}.
\]
\iffalse
\begin{equation}
\begin{split}
	\frac{a_2}{n-2} & = \frac{n(n-1)}{2(n-2)}\\
	& >\frac{2n}{n(n-1)} = \frac{a_1}{a_2}\\
	& >\frac{1}{n} = \frac{a_0}{a_1}.
\end{split}
\end{equation}
\fi
By the Enestr\"{o}m-Kakeya Theorem, we conclude that $|r| < \frac{n(n-1)}{2(n-2)}$.
\end{Proof}

There are graphs whose acyclic polynomials have degree three and their real roots are within unit modulus. In particular, if $G$ is the graph of order $2n^2$ ($n \geq 2$) whose complement is the disjoint union of $n$ stars with $2n$ vertices, then
%it can be verified that
$\AC(G,x) = (4n^4 - 4n^3 - n^2 + n)x^3 + n^2(2n^2-1)x^2 + 2n^2x + 1$
and
%we can show that
$\max\left\{\frac{a_2}{a_3}, \frac{a_1}{a_2}, \frac{a_0}{a_1}\right\} < 1$. So via the Enestr\"{o}m-Kakeya Theorem, all three roots lie in the unit disk.

However, and more interestingly, we can show that there are
graphs with acyclic dimension three
and with real acyclic roots of large modulus.

\begin{Theorem}\label{roots3}
There exist graphs with acyclic dimension three that have real acyclic roots of arbitrarily large modulus.
\end{Theorem}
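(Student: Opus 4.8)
The plan is to exhibit an explicit one-parameter family rather than to argue abstractly. The right candidate is $G = K_n - e$, the complete graph on $n$ vertices with one edge deleted, and the reason to expect it to work comes from two earlier results. By Theorem~\ref{Minimuma3}, $K_n - e$ attains the minimum possible leading coefficient $a_3 = n-2$ among all order-$n$ graphs of acyclic dimension three, and by Lemma~\ref{RealRootMonotonic} a smaller leading coefficient (with the lower-order coefficients $a_2 = \binom{n}{2}$, $a_1 = n$, $a_0 = 1$ held fixed) forces the unique real root further to the left. Thus $K_n - e$ should carry the real root of largest modulus, and Lemma~\ref{MinimumModulus} already hints that this modulus can be as large as roughly $\tfrac{n(n-1)}{2(n-2)} \approx n/2$. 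So first I would invoke Theorem~\ref{Degree3AcycPoly} to confirm that $K_n - e$ has acyclic dimension three --- its complement is $K_2$ together with $n-2$ isolated vertices, a disjoint union of stars one of which has an edge --- and record that $\AC(K_n - e, x) = (n-2)x^3 + \tfrac{n(n-1)}{2}x^2 + nx + 1$, whose real root $r_n$ is unique by Theorem~\ref{Thm-CubicDiscriminant}.

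Next I would prove $|r_n| \to \infty$ by a sign-of-evaluation argument. Writing $\AC(K_n - e, x) = (n-2)(x - r_n)\,|x - s_n|^2$, where $s_n, \overline{s_n}$ are the nonreal roots, shows that on the real axis the polynomial has the same sign as $x - r_n$; in particular $\AC(K_n - e, -M) > 0$ forces $r_n < -M$. So I would fix an arbitrary $M > 0$ and evaluate $\AC(K_n - e, -M) = -(n-2)M^3 + \tfrac{n(n-1)}{2}M^2 - nM + 1$. Regarded as a polynomial in $n$ with $M$ fixed, its leading term is $\tfrac{M^2}{2}n^2$, which dominates the $O(n)$ contribution of the cubic term, so the expression is positive for all sufficiently large $n$. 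Hence $r_n < -M$ eventually, giving $|r_n| > M$; since $M$ is arbitrary the real roots of this family have unbounded modulus, which is exactly the claim.

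This argument is short, so there is no serious technical obstacle; the one genuine point of insight --- rather than difficulty --- is the selection of the family. The naive expectation might be that making the graph ``more complete'' pushes roots outward, but the analysis goes the other way: large modulus is produced by making the leading coefficient $a_3$ as small as possible relative to $a_2 = \binom{n}{2}$, and $K_n - e$ is precisely the extremal graph for this by Theorem~\ref{Minimuma3}. Once the family is chosen, the remaining verification is the elementary degree count in $n$ above, and as a bonus it shows that the modulus in fact grows linearly, essentially saturating the upper bound of Lemma~\ref{MinimumModulus}.
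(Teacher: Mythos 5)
Your proposal is correct and takes essentially the same route as the paper: the paper also uses the family $K_n - e$ with $\AC(K_n-e,x) = (n-2)x^3 + \frac{n(n-1)}{2}x^2 + nx + 1$ and the same sign-of-evaluation argument, except it evaluates at the moving point $x = -\frac{n}{2}$ to get $\AC(K_n-e,-\frac{n}{2}) = \frac{n^3}{8} - \frac{n^2}{2} + 1 > 0$ and hence $r < -\frac{n}{2}$ for every $n \geq 4$. One small caveat: your fixed-$M$ version proves unboundedness of $|r_n|$ but does not by itself establish the linear growth you claim as a ``bonus''; for that you would need to evaluate at $x = -cn$ for a constant $c < \frac{1}{2}$ (or at $-\frac{n}{2}$, as the paper does).
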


\begin{Proof}
Let $n \geq 4$ and let $G$ be the graph $K_n$ minus an edge. Then as shown in Theorem~\ref{Minimuma3},
%\begin{equation}
$	\AC(G,x) = (n-2)x^3 + \frac{n(n-1)}{2}x^2 + nx + 1$.
%\end{equation}
Let $r$ be the unique real root of $\AC(G,x)$.

Note that at $x = -\frac{n}{2}$,
%\begin{equation}
\[
\begin{split}
	A\left(G, -\frac{n}{2}\right) & = -\frac{(n-2)n^3}{8} + \frac{n^3(n-1)}{8} - \frac{n^2}{2} + 1\\
%	& = -\frac{n^4}{8} + \frac{n^3}{4} + \frac{n^4}{8} - \frac{n^3}{8} - \frac{n^2}{2} + 1\\
	& = \frac{n^3}{8} - \frac{n^2}{2} + 1.
\end{split}
\]
%\end{equation}
For $n \geq 4$,
$\AC\left(G, -\frac{n}{2}\right)  \geq 1 > 0$.
\iffalse
\begin{equation}
\begin{split}
	\AC\left(G, -\frac{n}{2}\right) & \geq 1\\
	& > 0.
\end{split}
\end{equation}
\fi
It follows that $r < -\frac{n}{2}$, since $A\left(G, -\frac{n}{2}\right) > 0$ and all the coefficients of $\AC(G,x)$ are positive. Therefore, as $n \rightarrow \infty$, the root $r \rightarrow -\infty$.

We remark that $\AC\left(G, -\frac{n}{2} -1\right)  = -\frac{n^3}{8}-\frac{n^2}{2}+\frac{n}{2}+3 < 0$ for $n \geq 4$, and so the real root $r$ will lie in $(-\frac{n}{2}-1,-\frac{n}{2})$.
\end{Proof}

\subsection{Real acyclic roots}
\label{Section-RealRoots}

For graphs of acyclic dimension at most three, we have seen that an acyclic polynomial with all real roots is rare -- it only happens for those graphs that are acyclic (so $\AC(G,x)=(1+x)^n$ and the $n$ roots are all $-1$).  Does this behaviour extend past acyclic dimension three? The related question of when a particular graph polynomial has all real roots appears difficult. For example, an important result on independence polynomials, due to Chudnovsky and Seymour~\cite{chudseymour}, states that the independence polynomials of  claw-free graphs
%({\it i.e.}, those that do not contain an induced star $K_{1,3}$),
have all real roots, and yet this is far from a characterization of such graphs. There is much known on real-rooted chromatic polynomials \cite{dongbook} as well, and there are many examples of such families (such as the chromatic polynomials of chordal graphs).

Surprisingly, we can precisely characterize graphs that have all real acyclic roots. To do so, we will need a theorem that provides a necessary and sufficient condition for a  real polynomial to have all real roots. We begin with a definition.

The \textit{Sturm sequence} of a real polynomial $f$  is the sequence $f_0 = f,f_1 = f^{\prime},f_2,\ldots,f_k$ where, for $i \geq 2$, $f_i$ is the {\em negative} of the remainder when $f_{i-2}$ is divided by $f_{i-1}$, and $f_k$ is the last nonzero term in the sequence of polynomials of strictly decreasing degrees.
Sturm proved the following result (see \cite{Hen74,Mar14}).

\begin{Theorem}[Sturm's Theorem]\label{thmSturm}
Let $f$ be a polynomial with real coefficients and let $(f_0,f_1,\ldots,f_k)$ be its Sturm Sequence. Let $a<b$ be two real numbers that are not roots of $f$. Then the number of distinct roots of $f$ in $(a,b)$ is $V(a)-V(b)$ where $V(c)$ is the number of changes in sign in $(f_0(c),f_1(c),\ldots,f_k(c))$.
\end{Theorem}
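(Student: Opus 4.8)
The plan is to follow the classical sign-variation argument, tracking how the integer-valued function $V(c)$ behaves as the real parameter $c$ sweeps the line from $a$ to $b$. The overall strategy rests on two facts: that $V(c)$ is piecewise constant and can only jump when $c$ passes a real root of one of the Sturm polynomials $f_i$, and that among all such jumps, the \emph{only} ones that actually occur are drops of exactly $1$ as $c$ passes a distinct real root of $f=f_0$. Granting these, sweeping from $a$ to $b$ decreases $V$ by precisely the number of distinct roots of $f$ in $(a,b)$, which is the assertion $V(a)-V(b)$.

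First I would record the structural consequence of the construction. Since each $f_i$ (for $2\le i\le k$) is the negative remainder of dividing $f_{i-2}$ by $f_{i-1}$, there is a polynomial quotient $q_i$ with the three-term relation $f_{i-2}+f_i = q_i f_{i-1}$. I would treat the squarefree case first, where $f_k$ is a nonzero constant (equivalently $\gcd(f,f')$ is constant). From the three-term relation I would deduce that no two consecutive members $f_i,f_{i+1}$ can vanish at a common point $c_0$: a common zero would force $f_{i-1}(c_0)=0$ and, propagating up, $f_k(c_0)=0$, contradicting that $f_k$ is a nonzero constant. This non-adjacency of zeros is the lemma that drives the local analysis.

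Next comes the local analysis at each root, which splits into two cases. At a zero $c_0$ of an \emph{interior} polynomial $f_i$ with $1\le i\le k-1$, the relation gives $f_{i-1}(c_0)=-f_{i+1}(c_0)$, and by the previous step both neighbours are nonzero, hence of opposite sign; therefore the block $(f_{i-1},f_i,f_{i+1})$ contributes exactly one sign change throughout a neighbourhood of $c_0$, no matter what $f_i$ does, so $V$ is unchanged as $c$ passes $c_0$. At a (simple) zero $c_0$ of $f_0=f$, the derivative $f_1=f'$ is nonzero and carries the sign of $f$ just to the right of $c_0$; hence the pair $(f_0,f_1)$ shows one sign change immediately to the left of $c_0$ and none immediately to the right, so $V$ drops by exactly $1$. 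Summing these local contributions over the roots in $(a,b)$ gives the result in the squarefree case.

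The step I expect to be the main obstacle is the reduction to the squarefree case, since for a general $f$ the terminal polynomial $f_k=\gcd(f,f')$ need not be constant. I would handle this by passing to $\tilde f = f/f_k$, whose Sturm sequence is exactly $(f_i/f_k)$ and which shares the same set of \emph{distinct} real roots as $f$, and then invoking the squarefree case. The one genuinely delicate point is verifying that dividing the entire sequence by the common factor $f_k$ leaves the sign-change count $V(c)$ unaffected at the evaluation points $a$ and $b$ (and, more carefully, that nothing pathological happens at the real zeros of $f_k$), so that the count transfers back to $f$ without alteration. Apart from this bookkeeping, every remaining ingredient is the routine local sign analysis sketched above.
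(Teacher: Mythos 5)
First, note that the paper does not prove this statement at all: Sturm's Theorem is quoted as a classical result with citations to Henrici and Marden, so there is no in-paper argument to compare against, and your proposal must be judged on its own. The squarefree half of your argument is the standard one and is correct as sketched: the three-term relation $f_{i-2}+f_i=q_if_{i-1}$ rules out common zeros of consecutive terms (otherwise the zero propagates to $f_k$, a nonzero constant), a zero of an interior $f_i$ leaves $V$ locally constant because $f_{i-1}(c_0)=-f_{i+1}(c_0)\neq 0$ forces exactly one sign change in the triple on both sides, and a simple zero of $f_0$ costs the pair $(f_0,f_1)$ exactly one sign change since $f_1=f'$ has the sign of $f$ just to the right of the root. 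This correctly handles $a$ or $b$ being a zero of some $f_i$ with $i\geq 1$ as well, since the zero-skipping convention gives the same count as at nearby points.

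The genuine gap is in your reduction. The sequence $\bigl(f_i/f_k\bigr)$ is \emph{not} the Sturm sequence of $\tilde f=f/f_k$, because $\tilde f_1=f'/f_k$ is in general not equal to $(\tilde f)'$; consequently you cannot literally ``invoke the squarefree case,'' whose analysis at zeros of $f_0$ used $f_1=f'$. The repair is standard but must be carried out. The reduced sequence is a generalized Sturm sequence: each $f_i/f_k$ is a polynomial (by downward induction, $f_k$ divides every $f_i$), the three-term relations and the non-adjacency of zeros persist, and the last term is the constant $1$. The only property needing re-verification is the sign of $\tilde f_1$ at a zero $c_0$ of $\tilde f_0=\tilde f$, and this is exactly where your parenthetical worry about ``the real zeros of $f_k$'' bites, since those zeros are the multiple roots of $f$. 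Writing $f=(x-c_0)^m u$ with $u(c_0)\neq 0$ and $f_k=(x-c_0)^{m-1}h$ with $h(c_0)\neq 0$, one computes
\[
\tilde f_1(c_0)=\frac{m\,u(c_0)}{h(c_0)}=m\,\tilde f'(c_0),
\]
so $\tilde f_1$ is nonzero at $c_0$ with the sign of $\tilde f'$, and the pair $(\tilde f_0,\tilde f_1)$ still loses exactly one sign change there. Finally, the transfer of the count at the endpoints is unproblematic: since $f_k$ divides $f$ and $a,b$ are not roots of $f$, we have $f_k(a)\neq 0$ and $f_k(b)\neq 0$, and dividing the entire sequence by the nonzero number $f_k(c)$ either preserves or reverses every sign, leaving $V(c)$ unchanged. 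With these two supplements your argument is complete; without them, the appeal to the squarefree case for $\tilde f$ is an unproved (and, as stated, false) identification.
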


The Sturm sequence $(f_0,f_1,\ldots,f_k)$ of $f$ has \textit{gaps in degree} if the degree of one term is at least $2$ lower than the preceding one; the Sturm sequence has a \textit{negative leading coefficient} if one of the terms does. A consequence of Sturm's Theorem that is also due to Sturm (again, see \cite{Hen74,Mar14}) will be very useful to us (we use the formulation stated in  \cite{BrownHickman2002}).

\begin{Corollary}\label{corSturm}
Let $f$ be a real polynomial whose degree and leading coefficient are both positive. Then $f$ has all real roots if and only if its Sturm sequence has no gaps in degree and no negative leading coefficients.
\end{Corollary}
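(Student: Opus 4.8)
The plan is to derive both implications from Sturm's Theorem (Theorem~\ref{thmSturm}) together with two standard facts about the Sturm sequence $(f_0,f_1,\ldots,f_k)$. First, since the Sturm sequence is the Euclidean remainder sequence with signs flipped, its degrees agree with those produced by the Euclidean algorithm and its last nonzero term $f_k$ is a nonzero constant multiple of $\gcd(f,f')$; hence $\deg f_k=\deg\gcd(f,f')$. Second, writing $n=\deg f$ and letting $\rho_{\mathbb C}$ be the number of \emph{distinct} complex roots of $f$, we have $\deg\gcd(f,f')=n-\rho_{\mathbb C}$ (valid in characteristic zero). Taking $a$ below and $b$ above all real roots of all the $f_i$, Sturm's Theorem then says that the number $\rho$ of distinct real roots of $f$ equals $V(-\infty)-V(+\infty)$, where $V(\pm\infty)$ counts the sign changes in the sequence of leading-term signs of $f_0,\ldots,f_k$ at $\pm\infty$.

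I would first record the sign data. Write $d_i=\deg f_i$, so $d_0>d_1>\cdots>d_k\geq 0$, and let $\epsilon_i$ be the sign of the leading coefficient of $f_i$; then the sign of $f_i$ at $+\infty$ is $\epsilon_i$ and at $-\infty$ is $(-1)^{d_i}\epsilon_i$. Three elementary observations drive the argument. Since the sign sequence has $k+1$ entries, $V(\pm\infty)\leq k$. Since $\epsilon_0=+1$ (the leading coefficient of $f$ is positive), the signs at $+\infty$ begin with $+$, so $V(+\infty)=0$ if and only if every $\epsilon_i=+1$, i.e.\ the Sturm sequence has no negative leading coefficient. Finally, because the degrees strictly decrease, $d_0-d_k\geq k$, with equality precisely when there are no gaps in degree.

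For the reverse implication I would assume there are no gaps and no negative leading coefficients. Then $d_i=n-i$ and every $\epsilon_i=+1$, so $V(+\infty)=0$, while at $-\infty$ the signs $(-1)^{n-i}$ strictly alternate, giving $V(-\infty)=k$; hence $\rho=k$. Moreover $\deg\gcd(f,f')=d_k=n-k$, so the squarefree part $g=f/\gcd(f,f')$ has degree $k=\rho$. Since $g$ has exactly the distinct roots of $f$, each simple, $g$ has $\rho=\deg g$ real roots, whence all roots of $g$---equivalently all roots of $f$---are real.

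For the forward implication I would assume $f$ has all real roots, so $\rho_{\mathbb C}=\rho$ and $d_k=\deg\gcd(f,f')=n-\rho$; thus $d_0-d_k=\rho$, giving $k\leq d_0-d_k=\rho$. On the other hand, $\rho=V(-\infty)-V(+\infty)\leq V(-\infty)\leq k$. Therefore $k=\rho$, so $d_0-d_k=k$ and the sequence has no gaps; and then $\rho=k$ with $V(-\infty)\leq k$ forces $V(+\infty)=0$, so no leading coefficient is negative. The main obstacle---and the reason one cannot simply equate ``all real roots'' with ``$\rho=n$''---is repeated roots: the Sturm sequence stops at $\gcd(f,f')$ rather than at a constant, so the degree lost along the sequence measures the distinct roots, not all of them. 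The two facts about $\gcd(f,f')$ and the passage to the squarefree part are exactly what bridge this gap, and the pincer $k\leq\rho\leq k$ is the crux of the forward direction.
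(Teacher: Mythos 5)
Your proof is correct. Note, though, that the paper itself gives no proof of Corollary~\ref{corSturm}: it is stated as a classical consequence of Sturm's Theorem, attributed to Sturm with pointers to \cite{Hen74,Mar14} and with the formulation taken from \cite{BrownHickman2002}. So there is no argument in the paper to compare against step by step; what you have supplied is a self-contained derivation from Theorem~\ref{thmSturm}, and it is sound. The two ingredients you isolate are exactly the right ones: (1) the terminal term $f_k$ is an associate of $\gcd(f,f')$, so $\deg f_k = n - \rho_{\mathbb C}$ where $\rho_{\mathbb C}$ counts distinct complex roots; and (2) the evaluation of $V(\pm\infty)$ via leading-term signs, with the three elementary bounds $V(\pm\infty)\leq k$, $V(+\infty)=0 \iff$ all leading coefficients positive (using $\epsilon_0=+1$), and $d_0-d_k\geq k$ with equality iff no gaps. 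The pincer $k\leq d_0-d_k=\rho\leq V(-\infty)\leq k$ in the forward direction, and the passage to the squarefree part $g=f/\gcd(f,f')$ in the reverse direction, correctly handle the one genuinely delicate point -- repeated roots -- which is why one cannot argue by equating ``all real roots'' with ``$\rho=n$''; you flag this explicitly, and your use of Theorem~\ref{thmSturm} (which the paper states for arbitrary, not necessarily squarefree, $f$) is legitimate. What the paper's citation buys is brevity; what your derivation buys is a self-contained and multiplicity-aware proof that would make the paper independent of \cite{Hen74,Mar14} at the cost of roughly a page.
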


We are ready to characterize graphs with all real acyclic roots.

\begin{Theorem}\label{roots4}
$G$ has all real acyclic roots if and only if $G$ is a forest.
\end{Theorem}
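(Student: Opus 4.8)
The plan is to prove the two directions separately, with the forward direction being immediate and the converse (in contrapositive form) carrying all of the content. If $G$ is a forest then every vertex subset induces an acyclic subgraph, so $a_i = \binom{n}{i}$ for all $i$ and $\AC(G,x) = (1+x)^n$, whose roots are all equal to $-1$ and hence real. For the converse I would instead show that if $G$ is \emph{not} a forest then $\AC(G,x)$ has a nonreal root, and I would detect this using the Sturm-sequence criterion of Corollary~\ref{corSturm}.

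The main device is to pass to the reciprocal polynomial. Write $d$ for the degree of $\AC(G,x)$, so that $d = \MF(G) = n - \nabla(G)$, and set $\hat f(x) = x^d \AC(G,1/x)$. Since $a_0 = 1 \neq 0$, neither $\AC(G,x)$ nor $\hat f$ has $0$ as a root, and the roots of $\hat f$ are exactly the reciprocals of the roots of $\AC(G,x)$; hence $\AC(G,x)$ has all real roots if and only if $\hat f$ does. The point of reversing is that the \emph{leading} coefficients of $\hat f$ are the \emph{low-order} coefficients of $\AC(G,x)$, namely $a_0 = 1$, $a_1 = n$, $a_2 = \binom{n}{2}$, all of which are known explicitly. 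Moreover, if $G$ is not a forest then it contains a cycle, so $\nabla(G) \geq 1$ and therefore $d \leq n-1 < n$ (while $d \geq 2$, since $n \geq 3$ and $a_2 = \binom{n}{2} > 0$). It is precisely this strict inequality $d < n$ that will produce the obstruction.

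The final step is to compute the first three terms of the Sturm sequence of $\hat f$. Taking $\hat f_0 = \hat f$ and $\hat f_1 = \hat f^{\prime}$, a direct polynomial division --- which uses only the top three coefficients $1, n, \binom{n}{2}$ of $\hat f$ together with its degree $d$ --- yields that $\hat f_2 = -\mathrm{rem}(\hat f_0, \hat f_1)$ has leading term $\frac{n(d-n)}{d^2}\, x^{d-2}$. Because $d < n$ this leading coefficient is strictly negative, so the Sturm sequence of $\hat f$ has a negative leading coefficient and Corollary~\ref{corSturm} tells us that $\hat f$, and hence $\AC(G,x)$, does not have all real roots. I expect the only delicate point to be the bookkeeping in this division and the verification that the coefficient of $x^{d-2}$ is nonzero (so that it genuinely is the leading coefficient of $\hat f_2$); both follow from $d \neq n$. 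Conceptually, the crux is the twofold observation that reversing the polynomial exposes the controllable coefficients and that the entire obstruction already surfaces at $\hat f_2$, governed solely by whether $\MF(G) < n$, that is, by whether $G$ has a cycle. (Equivalently, this computation is exactly the failure of Newton's inequality at index $1$.)
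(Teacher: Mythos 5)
Your proposal is correct, and the computation it hinges on checks out: dividing $\hat f_0 = x^d + nx^{d-1} + \binom{n}{2}x^{d-2} + \cdots$ by $\hat f_1 = \hat f_0^{\prime}$ gives quotient $\frac{x}{d} + \frac{n}{d^2}$ and a remainder whose $x^{d-2}$-coefficient is $\frac{n(n-d)}{d^2}$, so $\hat f_2$ indeed has leading term $\frac{n(d-n)}{d^2}x^{d-2}$, which is nonzero and strictly negative exactly when $d < n$, i.e., exactly when $G$ has a cycle. You share the paper's overall framework --- reverse the polynomial and apply Corollary~\ref{corSturm} to the first Sturm remainder --- but the obstruction you exhibit is genuinely different. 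The paper reverses by $x^n$ rather than $x^d$, so its $f_0 = x^n \AC(G,1/x)$ agrees with $(1+x)^n$ in every coefficient above degree $n-g$, where $g$ is the girth; since $(1+x)^n$ is divided exactly by its derivative with quotient $\frac{x+1}{n}$, the remainder has degree at most $n-g \leq n-3$, and the paper concludes from a \emph{gap in degree} (from $n-1$ down past $n-2$), not from a sign. That version needs the girth-level coefficient deficit $\alpha$, plus a separate verification that $f_2 \neq 0$ (done via the coefficient of $x^{n-d-1}$, which is necessary because the displayed top remainder coefficient $\alpha\frac{n-2g+1}{n}$ can vanish, e.g.\ when $n = 2g-1$). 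Your version needs none of that: it uses only $a_0 = 1$, $a_1 = n$, $a_2 = \binom{n}{2}$ and the single inequality $d < n$, and the nonvanishing of the leading coefficient of $\hat f_2$ is automatic. What the paper's route buys is a structural picture of where the graph first perturbs the Sturm sequence (at the girth); what yours buys is economy, and --- as your closing remark correctly observes --- transparency: your computation is precisely the failure of Newton's inequality $p_1^2 \geq p_0 p_2$ at index $1$, which with these three coefficients reduces to $d \geq n$. Indeed, one could bypass Sturm sequences altogether by citing Newton's inequalities for real-rooted polynomials (the paper invokes Newton via \cite{comtet} elsewhere), making your argument arguably the more portable of the two, e.g.\ to the hereditary-class generalization mentioned in the paper's concluding section.
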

\begin{Proof}
We observe first that if $G$ is a forest of order $n$, then $\AC(G, x) = (1+x)^n$, which clearly has all real roots (namely $-1$ with multiplicity $n$). We now assume that $G$ is not a forest, so that it has a cycle. Let $g$ be the order of a smallest cycle in $G$, so $g \geq 3$. Suppose that $\AC(G, x)$ has degree $d = n - \nabla(G) < n$. Then
\[
\AC(G, x)  =  1+ {{n} \choose {1}}x + {{n} \choose {2}}x^2 + \cdots + {{n} \choose {g-1}}x^{g-1} + \left({{n} \choose {g}}  - \alpha\right)x^{g}
+ \cdots + a_{d}x^{d}
\]
\iffalse
\begin{eqnarray}
\AC(G, x) & = & 1+ {{n} \choose {1}}x + {{n} \choose {2}}x^2 + \cdots + {{n} \choose {g-1}}x^{g-1} + \left({{n} \choose {g}}  - \alpha\right)x^{g} + \nonumber \\
 & & \cdots + a_{d}x^{d}
\end{eqnarray}
\fi
where $\alpha$ is a positive integer.
It is clear that $\AC(G, x)$ has all real roots if and only if
\begin{eqnarray*}
f(x) & = & x^{n}\AC\left(G, \frac{1}{x} \right) \nonumber\\
 & = & x^{n}+ {{n} \choose {1}}x^{n-1} + {{n} \choose {2}}x^{n-2} + \cdots + {{n} \choose {g-1}}x^{n-(g-1)}  \nonumber \\
  & & {}+ \left( {{n} \choose {g}} - \alpha \right) x^{n-g} + \cdots + a_d x^{n-d}
\end{eqnarray*}
has all real roots.

We now consider the first few terms of the Sturm sequence of $f$ and show that $f = f_{0}$ does \underline{not} have all real roots.
Since $f_1 = f^\prime$, then
\begin{eqnarray*}
%\begin{split}
f_{1}  & = & nx^{n-1}+ (n-1){{n} \choose {1}}x^{n-2} + (n-2){{n} \choose {2}}x^{n-3} + \cdots +    (n-(g-1)){{n} \choose {g-1}}x^{n-g}    \\
 & & {}+   (n-g)\left( {{n} \choose {g}} - \alpha \right) x^{n-g-1} + \cdots + (n-d) a_d x^{n-d-1}.
%\end{split}
\end{eqnarray*}
Now
\[ f_0 = \left( \frac{x}{n}+\frac{1}{n}\right) f_1 + \left( \alpha  \frac{n-2g+1}{n} \right)x^{n-g} + \cdots, \]
so
\[ f_2 = -\left( f_0-\left( \frac{x}{n}+\frac{1}{n}\right) f_1 \right).\]
As $n-d \geq 1$, the coefficient of $x^{n-d-1}$ in $f_2$ is clearly nonzero (since $x^{n-d}$ is the smallest power of $x$ in  $f_0$ that has a nonzero coefficient), so it follows that $f_2$ is a nonzero polynomial of degree at most  $n-g \leq n-3$. This clearly implies that the Sturm sequence of $f$ will have a gap in degree, and hence a nonreal root. We conclude that the acyclic polynomial of $G$ has a nonreal root as well.
\end{Proof}

A classical theorem of Newton states that if all of the roots of a polynomial with positive real coefficients are themselves real,
then the polynomial's coefficients are log-concave and therefore also unimodal (see, for example, \cite{comtet}).
Hence it follows from Theorem~\ref{roots4} that if $G$ is a forest then the coefficients of $\AC(G,x)$ are unimodal,
although this observation is also readily apparent from the fact that if $G$ is a forest of order $n$ then $\AC(G,x) = (1+x)^n$.

Instead of focussing on real roots, we could instead ask which rational numbers arise as acyclic roots.
%A related question is: what rational numbers are acyclic roots?
The answer is rather easy, in that the set of all such roots is $\{-\frac{1}{n} : n \geq 1 \}$.
The Rational Root Theorem shows that every root of an acyclic polynomial is of the form $-\frac{1}{k}$ for some positive integer $k$ as acyclic polynomials have constant term 1 and have positive integer coefficients. On the other hand,
note that $\AC(K_{n,n},x) = 2nx \big((1+x)^n-nx-1 \big) + n^2x^2 +2(1+x)^n-1$
and hence $\AC(K_{n,n}, -\frac{1}{n}) = 0$.

\subsection{Acyclic roots of large modulus}

The real acyclic roots of large modulus that we discovered in Section~\ref{Sec-SmallAcyclicDimension} had modulus $\Omega(n)$, but from calculations it seems that this is far from the true magnitude for graphs in general. To find (real) acyclic roots of larger modulus, we will examine
the complement of the disjoint union of the star graph $S_{n-4}$ of order $n-4$ and the cycle $C_4$.
We denote this graph as $J_n=\overline{S_{n-4} \cup C_4}$, which we observe is the same as $\overline{S_{n-4}} + \overline{C_{4}}$;
see Figure~\ref{C4barJoinS6bar} for an example of such a graph.
This family of graphs may seem to be an arbitrary choice of a family to examine, but for graphs with order between $5$ and $8$,
the acyclic polynomial of $\overline{S_{n-4} \cup C_4}$ has the left-most real root, and the root of largest modulus.

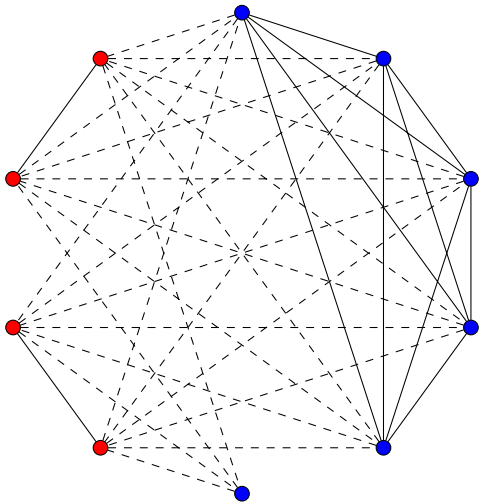
\begin{figure}[h!]
\centering
\begin{tikzpicture}[scale=0.8]
	\tikzstyle{every node}=[circle, draw, font=\small, scale=0.5]
	
	\node[fill=blue] (a) at (270:4cm) {};
	\node[fill=blue] (b) at (306:4cm) {};
	\node[fill=blue] (c) at (342:4cm) {};
	\node[fill=blue] (d) at (18:4cm) {};
	\node[fill=blue] (e) at (54:4cm) {};
	\node[fill=blue] (f) at (90:4cm) {};
	\node[fill=red] (g) at (126:4cm) {};
	\node[fill=red] (h) at (162:4cm) {};
	\node[fill=red] (i) at (198:4cm) {};
	\node[fill=red] (j) at (234:4cm) {};
	
	\draw (b)--(c)--(d)--(e)--(f)--(b)--(d)--(f)--(c)--(e)--(b);
	\draw (g)--(h);
	\draw (i)--(j);
	
	\draw[dashed] (g)--(a)--(h)--(b)--(g)--(c)--(h)--(d)--(g)--(e)--(h)--(f)--(g);
	\draw[dashed] (i)--(a)--(j)--(b)--(i)--(c)--(j)--(d)--(i)--(e)--(j)--(f)--(i);
\end{tikzpicture}
\caption{The graph $J_{10}$ is the join of $\overline{C_4}$ (in red) and $\overline{S_6}$ (in blue). The dotted lines show the join.}
\label{C4barJoinS6bar}
\end{figure}

\begin{Question}
Let $n \geq 5$. Is it the case that the acyclic polynomial of the graph
$J_n$ %whose complement is $S_{n-4} \cup C_4$
has a real root that is less than the real roots of all other acyclic polynomials of graphs of order $n$?
Moreover, does $J_n$ have the acyclic root of largest modulus for graphs of order $n$?
\end{Question}

We will show that the maximum modulus of the acyclic roots of such graphs is, in fact, quadratic in $n$. We begin by first finding the acyclic polynomial of these graphs.

\begin{Lemma}\label{StarandC4CompAcycPoly}
Let $n \geq 5$.  The acyclic polynomial of the graph $J_n$
%$G$ with $n\geq 5$ vertices whose complement is the disjoint union of a star with $n-4$ vertices and a 4-cycle
is
\begin{equation*}
	x^4 + \frac{n^2 + 5n - 34}{2}x^3 + \frac{n^2 - n}{2}x^2 + nx + 1
\end{equation*}
\end{Lemma}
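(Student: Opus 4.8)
The plan is to realize $J_n$ as a join and invoke Theorem~\ref{Thm-join}. Since the complement of a disjoint union is the join of the complements, we have $J_n = \overline{S_{n-4}} + \overline{C_4}$, exactly as noted before the statement. First I would identify the two factors explicitly. The star $S_{n-4} = K_{1,n-5}$ has one centre adjacent to all $n-5$ leaves, so in the complement the centre becomes isolated while the (previously non-adjacent) leaves form a clique; hence $\overline{S_{n-4}} = K_1 \cup K_{n-5}$. Similarly, labelling $C_4$ as $1$-$2$-$3$-$4$-$1$, its complement has only the two diagonals as edges, so $\overline{C_4} = 2K_2$. Thus $J_n$ is the join of $G := K_1 \cup K_{n-5}$ on $n_G = n-4$ vertices with $H := 2K_2$ on $n_H = 4$ vertices, and $n = n_G + n_H$ as required by the join formula.

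Next I would compute the four polynomials that feed into Theorem~\ref{Thm-join}. Using the multiplicativity of $\AC$ under disjoint union (Equation~(\ref{Eq0})) together with $\AC(K_m,x) = 1 + mx + \binom{m}{2}x^2$, one gets
\[ \AC(G,x) = (1+x)\Bigl(1 + (n-5)x + \tbinom{n-5}{2}x^2\Bigr) = 1 + (n-4)x + \tbinom{n-4}{2}x^2 + \tbinom{n-5}{2}x^3, \]
while $2K_2$ is a forest of order $4$, so $\AC(H,x) = (1+x)^4$. For the independence polynomials, Equation~(\ref{Eq1}) with $I(K_m,x) = 1 + mx$ gives $I(G,x) = (1+x)\bigl(1+(n-5)x\bigr) = 1 + (n-4)x + (n-5)x^2$ and $I(H,x) = (1+2x)^2 = 1 + 4x + 4x^2$.

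Finally I would substitute these into Theorem~\ref{Thm-join} with $n_G = n-4$ and $n_H = 4$, and collect coefficients of $x^0,\dots,x^4$. The $x^4$ term arises solely from $\AC(H,x)=(1+x)^4$, giving the leading $x^4$; the constant, linear, and (after simplification) quadratic and cubic coefficients should match $1$, $nx$, $\tfrac{n^2-n}{2}x^2$, and $\tfrac{n^2+5n-34}{2}x^3$ respectively. There is no real conceptual obstacle here; the one coefficient demanding genuine algebra is the quadratic correction term $\binom{n}{2} - 2n_Gn_H - \binom{n_G}{2} - \binom{n_H}{2}$, which simplifies to $-4(n-4)$ and precisely cancels the excess quadratic contributions from $n_G x\,I(H,x)$ and $n_H x\,I(G,x)$. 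Accordingly, the main risk is arithmetic bookkeeping in assembling the $x^2$ and $x^3$ coefficients, and I would carry out that collection carefully term by term to confirm the stated polynomial.
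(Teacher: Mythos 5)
Your proposal is correct and follows exactly the paper's route: writing $J_n = \overline{S_{n-4}} + \overline{C_4} = (K_1 \cup K_{n-5}) + 2K_2$, computing the same four ingredient polynomials $\AC(K_1 \cup K_{n-5},x) = (1+x)\bigl(1+(n-5)x+\binom{n-5}{2}x^2\bigr)$, $\AC(2K_2,x)=(1+x)^4$, $I(K_1 \cup K_{n-5},x)=(1+x)(1+(n-5)x)$, $I(2K_2,x)=(1+2x)^2$, and substituting into Theorem~\ref{Thm-join} with $n_G = n-4$, $n_H = 4$. The coefficient collection you defer does check out (in particular your simplification of the quadratic correction to $-4(n-4)$ is right, and the cubic coefficient assembles as $\binom{n-5}{2}+4+4(n-4)+4(n-5)=\frac{n^2+5n-34}{2}$), so the argument is complete and identical in substance to the paper's.
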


\begin{Proof}
Note that
$J_n = \overline{S_{n-4} \cup C_4} = \overline{S_{n-4}} + \overline{C_{4}} = (K_{1} \cup K_{n-5}) + 2K_{2}$.
It is easy to verify that:
\begin{eqnarray*}
A(K_{1} \cup K_{n-5},x) & = & (1+x)\left( 1 +(n-5)x + {{n-5} \choose 2}x^2 \right)\\
A(2K_{2},x) & = & (1+x)^4\\
I(K_{1} \cup K_{n-5},x) & = & (1+x)(1 +(n-5)x)\\
I(2K_{2},x) & = & (1+2x)^2.\\
\end{eqnarray*}
Applying Theorem~\ref{Thm-join}, we derive that
\begin{eqnarray*}
\AC \big( (K_{1} \cup K_{n-5}) + 2K_{2},x \big) & = &
\AC(K_{1} \cup K_{n-5},x) + \AC(2K_{2},x) \\
& &
{}+ (n-4) x I(2K_{2},x)+4 x I(K_{1} \cup K_{n-5},x)  \\
 & &
{}+ \left( {n \choose 2} - 8 (n-4) - {{n-4} \choose 2} - {{4} \choose 2}\right)x^2
- nx -1\\
 & = & x^4 + \frac{n^2 + 5n - 34}{2}x^3 + \frac{n^2 - n}{2}x^2 + nx + 1.
\end{eqnarray*}
\end{Proof}

Though we cannot show that the acyclic polynomial of $J_n$ has the minimum real root for arbitrary $n$, we can place bounds on the real root that appears to be the minimum.

\begin{Theorem}\label{roots5}
Let $n \geq 6$.  The acyclic polynomial of the graph $J_n$
%$G$ with $n\geq 6$ vertices whose complement is $S_{n-4} \cup C_4$,
has a real root between $\frac{-n^2}{2} - \frac{5n}{2} + 17$ and $\frac{-n^2}{2} - \frac{5n}{2} + 18$, and has no root of modulus larger than $\frac{n^2}{2} + \frac{5n}{2} - 17$.
\end{Theorem}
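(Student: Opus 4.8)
The plan is to split the statement into two independent parts — existence of a real root in the stated unit interval, and the upper bound on the modulus of every root — and to treat both starting from the explicit polynomial of Lemma~\ref{StarandC4CompAcycPoly}. Write $\AC(J_n,x) = x^4 + c_3 x^3 + c_2 x^2 + c_1 x + c_0$ with $c_3 = \frac{n^2+5n-34}{2}$, $c_2 = \frac{n^2-n}{2}$, $c_1 = n$ and $c_0 = 1$. The key structural observation is that the left endpoint $a(n) = -\frac{n^2}{2}-\frac{5n}{2}+17$ equals $-c_3$, the negative of the cubic coefficient; this is no coincidence, since by Vieta the four roots sum to $-c_3$ and $\AC(J_n,x)$ has a single root of modulus $\Theta(n^2)$ dominating three roots near the origin.

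For the existence part I would apply the Intermediate Value Theorem, evaluating $\AC(J_n,x)$ at the two endpoints and showing the signs differ. Substituting $x = -c_3 + t$, the top two terms telescope: $x^4 + c_3 x^3 = x^3(x + c_3) = t\,x^3$. At $t = 0$ (the endpoint $a(n)$) this vanishes, leaving $\AC(J_n,a(n)) = c_2 c_3^2 - c_1 c_3 + c_0 = c_3(c_2 c_3 - c_1) + c_0$, which is positive since $c_3 > 0$ and $c_2 c_3 \geq c_1$ trivially. At $t = 1$ (the endpoint $b(n)$) one obtains $\AC(J_n,b(n)) = w^3 + c_2 w^2 + c_1 w + c_0$ where $w = -c_3 + 1 = -\frac{n^2+5n-36}{2} < 0$. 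Factoring the leading pair as $w^2(w + c_2)$ and computing the exact identity $w + c_2 = -3(n-6)$ collapses this to $\AC(J_n,b(n)) = -3(n-6)w^2 + nw + 1$; here the first summand is nonpositive for $n \geq 6$ while $nw \leq -89$ (with $|nw|$ increasing in $n$), so the whole expression is negative. Opposite signs at the endpoints yield a real root in $(a(n),b(n))$.

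For the modulus bound I would appeal directly to the Enestr\"{o}m-Kakeya Theorem. Every coefficient of $\AC(J_n,x)$ is positive for $n \geq 6$, so every root $r$ satisfies $|r| < \max\{a_0/a_1,\, a_1/a_2,\, a_2/a_3,\, a_3/a_4\}$. The first three ratios are each less than $1$ (indeed $a_0/a_1 = \frac1n$, $a_1/a_2 = \frac{2}{n-1}$, and $a_2/a_3 < 1$ for $n \geq 6$), whereas $a_3/a_4 = \frac{n^2+5n-34}{2} = \frac{n^2}{2}+\frac{5n}{2}-17$. Hence this last ratio is the maximum, and it equals $|a(n)|$ exactly, giving $|r| < \frac{n^2}{2}+\frac{5n}{2}-17$ for every root, as required.

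I expect the only genuinely delicate step to be the sign determination at $b(n)$: both leading terms of $\AC(J_n,b(n))$ have size $\Theta(n^6)$ and cancel to leading order, so any crude estimate is inconclusive. The factorization $w^2(w + c_2)$ together with the clean identity $w + c_2 = -3(n-6)$ is precisely what exposes the sign; the remaining verifications are routine. (Observe also that the two parts together pin the root just found into modulus between $|a(n)|-1$ and $|a(n)|$, confirming that it is essentially the root of largest modulus.)
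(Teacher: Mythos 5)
Your proposal is correct, and its overall skeleton coincides with the paper's: both parts are handled exactly as the paper does at the top level, namely an Intermediate Value Theorem argument comparing signs at the endpoints $a(n)$ and $b(n)$, followed by the Enestr\"{o}m--Kakeya Theorem for the modulus bound (your check that $a_3/a_4 = \frac{n^2+5n-34}{2}$ dominates the ratios $\frac{1}{n}$, $\frac{2}{n-1}$ and $\frac{n^2-n}{n^2+5n-34} < 1$ for $n \geq 6$ is the same application the paper makes). Where you genuinely diverge is in how the endpoint signs are established. The paper substitutes $m = a(n)$ and $M = b(n)$ directly and expands by brute force, obtaining a sextic and a quintic in $n$; it then needs crude estimates valid only for $n \geq 8$, a monotonicity argument via the derivative of $\AC(J_n,M)$ with respect to $n$, and separate numerical verifications for $n = 6$ and $n = 7$. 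You instead exploit the structural identity $a(n) = -c_3$, so that $x^4 + c_3x^3 = x^3(x+c_3)$ vanishes at $a(n)$, giving the manifestly positive value $c_3(c_2c_3 - c_1) + 1$, and telescopes to $w^3$ at $b(n) = -c_3 + 1$; the further exact identity $w + c_2 = -3(n-6)$ then collapses $\AC(J_n,b(n))$ to $-3(n-6)w^2 + nw + 1$, which is negative uniformly for all $n \geq 6$ with no case analysis (at $n=6$ it equals $-89$, agreeing with the paper's value there). Your route buys exactness and uniformity, and it also explains \emph{why} the interval works: the left endpoint is precisely the negated trace $-c_3$, near which the dominant root must sit since the other three roots are small -- something the paper's expansion-and-estimate computation leaves opaque. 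I verified your two endpoint formulas against the paper's expanded polynomials at $n=6$ ($3745$ and $-89$ respectively) and they agree. One cosmetic slip: at $n = 6$ one has $nw = -90$, so your stated bound $nw \leq -89$ is true but not the natural one; nothing depends on this.
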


\begin{Proof}
By Lemma~\ref{StarandC4CompAcycPoly},
%\begin{equation}
$\AC(J_n,x) = x^4 + \frac{n^2 + 5n - 34}{2}x^3 + \frac{n^2 - n}{2}x^2 + nx + 1$.
%\end{equation}
Let $m = \frac{-n^2}{2} - \frac{5n}{2} + 17$. Then
\begin{equation*}
	\AC(J_n,m) = \frac{1}{8}n^6 + \frac{9}{8}n^5 - \frac{53}{8}n^4 - \frac{301}{8}n^3 + \frac{369}{2}n^2 - \frac{255}{2}n + 1.
\iffalse
\begin{split}
	\AC(G,m) &= \left(\frac{-n^2}{2} - \frac{5n}{2} + 17\right)^4 + \frac{n^2 + 5n - 34}{2}\left(\frac{-n^2}{2} - \frac{5n}{2} + 17\right)^3\\
	&\qquad + \frac{n^2 - n}{2}\left(\frac{-n^2}{2} - \frac{5n}{2} + 17\right)^2 + n\left(\frac{-n^2}{2} - \frac{5n}{2} + 17\right) + 1\\
	&= \frac{1}{8}n^6 + \frac{9}{8}n^5 - \frac{53}{8}n^4 - \frac{301}{8}n^3 + \frac{369}{2}n^2 - \frac{255}{2}n + 1.
\end{split}
\fi
\end{equation*}
When $n \geq 8$, we have
\begin{equation*}
\begin{split}
	\AC(J_n,m) &\geq 8n^4 + 72n^3 - \frac{53}{8}n^4 - \frac{301}{8}n^3 + 1476n - \frac{255}{2}n + 1\\
	& = \frac{11}{8}n^4 + \frac{275}{8}n^3 + \frac{2697}{2}n + 1\\
	& > 0.
\end{split}
\end{equation*}
For $n=6$ and $n=7$, we can verify that $\AC(J_n,m)$ is positive as well. Hence, for $n \geq 6$, $\AC(J_n,x)$ is positive when $x = \frac{-n^2}{2} - \frac{5n}{2} + 17$.

On the other hand, when $M = \frac{-n^2}{2} - \frac{5n}{2} + 18$, we have
\begin{equation*}
	\AC(J_n,M) = -\frac{3}{4}n^5 - 3n^4 + \frac{319}{4}n^3 + 56n^2 - 2574n + 5833
\iffalse
\begin{split}
	\AC(G,M) &= \left(\frac{-n^2}{2} - \frac{5n}{2} + 18\right)^4 + \frac{n^2 + 5n - 34}{2}\left(\frac{-n^2}{2} - \frac{5n}{2} + 18\right)^3\\
	& \qquad + \frac{n^2 - n}{2}\left(\frac{-n^2}{2} - \frac{5n}{2} + 18\right)^2 + n\left(\frac{-n^2}{2} - \frac{5n}{2} + 18\right) + 1\\
	&= -\frac{3}{4}n^5 - 3n^4 + \frac{319}{4}n^3 + 56n^2 - 2574n + 5833.
\end{split}
\fi
\end{equation*}
and
\begin{equation*}
	\AC'(J_n,M) = -\frac{15}{4}n^4 - 12n^3 + \frac{957}{4}n^2 + 112n - 2574.
\end{equation*}
So, when $n \geq 8$,
\begin{equation*}
\begin{split}
	\AC'(J_n,M) &\leq -240n^2 - 768n + \frac{957}{4}n^2 + 112n - 2574\\
	& = -\frac{3}{4}n^2 - 656n - 2574\\
	& < 0.
\end{split}
\end{equation*}
Thus, $\AC(J_n,M)$ is decreasing when $n \geq 8$. At $n=8$, $\AC(J_n,M) = -7207$.
So, $\AC(J_n,M) \leq 0$ when $n \geq 8$. Again, we can also verify that when $n = 6$ and $n=7$, $\AC(J_n,M)$ is negative.

It follows from the Intermediate Value Theorem that for $n \geq 6$, the acyclic polynomial $\AC(J_n,x)$ has a real root $r$
such that $\frac{-n^2}{2} - \frac{5n}{2} + 17 < r < \frac{-n^2}{2} - \frac{5n}{2} + 18$.  That there is no root of modulus greater than $\frac{n^2}{2} + \frac{5n}{2} - 17$
follows from the Enestr\"{o}m-Kakeya Theorem.
\end{Proof}

\subsection{Acyclic polynomials with roots in the right half-plane}
\label{Section-AcyclicRootsRightHalfPlane}

Throughout this exposition, all the roots of acyclic polynomials that we have encountered lie in the left half-plane. This leads us to question whether there exist acyclic polynomials that have roots with a positive real component.

Instead of directly finding an acyclic polynomial with a root in the right half-plane, we present a family of graphs with limits of roots in the right half-plane.

First, we make the following definition:

\begin{Definition}
Let $G$ and $H$ be two arbitrary graphs. Then $G[H]$ is the graph formed by replacing each vertex of $G$ by a copy of graph $H$, and inserting all edges between two copies of $H$ if and only if the corresponding vertices of $G$ were adjacent (we say that we have \emph{substituted $H$ in for each vertex of $G$}; $G[H]$ is often called the \emph{lexicographic product} of $G$ and $H$).
\end{Definition}

The following lemma allows us to compute the acyclic polynomial for $G[K_{k}]$ where $G$ is a complete multipartite graph.

\begin{Lemma} \label{AcycPolyGraphComp}
Let $G$ be an arbitrary complete multipartite graph.  Then
\begin{equation*}
	\AC\left(G[K_k],x\right) = \AC(G,kx) + I\left(G,kx+\binom{k}{2}x^2\right) - I(G,kx),
\end{equation*}
where $I(G,x)$ is the independence polynomial of $G$.
\end{Lemma}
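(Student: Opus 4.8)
The plan is to classify each acyclic set $S \subseteq V(G[K_k])$ according to how it meets the $k$-cliques that replace the vertices of $G$, and then read off the three terms of the formula as generating functions over two disjoint families of such sets. Write $G$ as a complete multipartite graph with parts $P_1,\dots,P_m$, and for each vertex $v$ of $G$ let $B_v$ denote the copy of $K_k$ substituted for $v$. For an acyclic set $S$, put $s_v = |S \cap B_v|$ and let $T = \{v : s_v \geq 1\}$ be its \emph{support}. Since each $B_v$ induces a clique, $S \cap B_v$ can be acyclic only if $s_v \leq 2$, so every active bag contributes one or two vertices of $S$. I would also record the basic fact, special to complete multipartite graphs, that a set $T \subseteq V(G)$ is independent if and only if it lies inside a single part, equivalently if and only if no edges of $G[K_k]$ run between distinct bags $B_u, B_v$ with $u,v \in T$.

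First I would treat the case where $T$ is independent. Then there are no edges between distinct active bags, so $G[K_k][S]$ is the disjoint union of the cliques $K_{s_v}$, which is acyclic precisely because each $s_v \leq 2$; thus \emph{every} choice with $s_v \in \{1,2\}$ on an independent support is acyclic. Each active bag can be filled in $k$ ways with a single vertex (contributing $x$) or in $\binom{k}{2}$ ways with a pair (contributing $x^2$), so it carries a factor $kx + \binom{k}{2}x^2$. Summing $\prod_{v \in T}\bigl(kx + \binom{k}{2}x^2\bigr)$ over all independent sets $T$ gives exactly $I\bigl(G, kx + \binom{k}{2}x^2\bigr)$, by the definition of the independence polynomial; this accounts for the first family and includes the empty set.

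Next I would treat the case where $T$ is \emph{not} independent, i.e.\ $T$ meets at least two parts. The crucial structural step is that here every active vertex of $G$ has an active neighbour: if $u \in T$ lies in part $P$, then because $T$ meets a second part there is an active $w \in T \setminus P$, and $u \sim w$ since $G$ is complete multipartite. Consequently, if some active bag had $s_u = 2$, say $S \cap B_u = \{a,b\}$, then taking any $c \in S \cap B_w$ yields a triangle $abc$ (the edge $ab$ inside $B_u$ together with the edges $ac,bc$ between the adjacent bags $B_u,B_w$), contradicting acyclicity. Hence $s_v = 1$ for every $v \in T$, so $S$ consists of one vertex per active bag and $G[K_k][S] \cong G[T]$; therefore $S$ is acyclic if and only if $T$ is an acyclic set of $G$. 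Each of the $|T|$ bags contributes a factor $kx$, so this family contributes $\sum (kx)^{|T|}$ over all acyclic but non-independent supports $T$, which equals $\AC(G,kx) - I(G,kx)$ since every independent set is acyclic.

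These two families are disjoint—they are distinguished by whether the support is independent—and together they exhaust all acyclic sets, so adding the two generating functions yields the stated identity. The only real obstacle is the structural characterization in the non-independent case; once the triangle argument pins every active bag down to a single vertex, the bookkeeping with $I$ and $\AC$ is routine. I would finish by checking the boundary pieces (the empty set and singleton supports fall in the independent family) to confirm that the constant and linear terms agree on both sides.
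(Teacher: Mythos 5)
Your proof is correct and follows essentially the same route as the paper's: both partition the acyclic sets of $G[K_k]$ according to whether the support is an independent set of $G$, use the triangle argument to force exactly one vertex per active bag in the non-independent case, and read off the contributions $I\bigl(G,kx+\binom{k}{2}x^2\bigr)$ and $\AC(G,kx)-I(G,kx)$ respectively. Your observation that an independent set in a complete multipartite graph lies within a single part is a slightly more explicit justification of the independent case than the paper gives, but the decomposition and counting are identical.
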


\begin{Proof}
Let $H=K_k$ and let $V_G$ and $V_H$ be the vertex sets of $G$ and $H$ respectively so that the vertex set of $G[H]$ is $V_{G[H]} = V_G \times V_H$.
Suppose $U\subseteq V_{G[H]}$ and define
$\mbox{support}(U) = \{v_i : (v_i,w)\in U \mbox{ for some } w\}$.

Suppose $(v_1,w_1),(v_2,w_2)\in U$ for some $v_1,v_2,w_1,w_2$. If $v_1,v_2$ are adjacent in $G$, then $(v_1,w_1),(v_2,w_2)$ must be adjacent in $G[H]$. Thus, if $U$ is acyclic, $\mbox{support}(U)$ must be acyclic as well.
Partition the acyclic subsets of $G$ into two sets:
\begin{enumerate}
	\item The independent subsets of $G$ (all of which are necessarily acyclic)
	\item The acyclic subsets of $G$ that are not independent.
\end{enumerate}
%Note that $\mbox{support}(U)$ belongs to one of these sets.

Suppose $\mbox{support}(U)$ belongs to the first set. Then, since no two vertices of $\mbox{support}(U)$ are adjacent, two vertices $(v_1,w_1),(v_2,w_2)\in U$ are adjacent if and only if $v_1 = v_2$ and $w_1$ is adjacent $w_2$ in $H$. Thus, since $H$ is a complete graph, $U$ is acyclic if and only if, for every $v_i\in\mbox{support}(U)$ there exist no more than two vertices in $U$ of the form $(v_i,w)$ for some $w$. Hence, the contribution of these subsets to the acyclic polynomial of $G[H]$ is
%\begin{equation*}
$I\left(G,kx+\binom{k}{2}x^2\right)$.
%\end{equation*}

If instead $\mbox{support}(U)$ is in the second set, then $\mbox{support}(U)$ must contain two vertices that are adjacent in $G$. Because $G$ is multipartite, this means that $\mbox{support}(U)$ does not contain any isolated vertices. Thus, if there exist two vertices $(v_1,w_1),(v_1,w_1')\in U$ for some $w_1\neq w_1'$ then there must exist a third vertex $(v_2,w_2)\in U$ such that $v_2$ is adjacent $v_1$ in $G$. These three vertices must then all be joined in $G[H]$, forming a cycle. Hence, if $U$ is acyclic, then for each $v_i\in\mbox{support}(U)$ there are no two vertices in $U$ each of the form $(v_i,w)$ for some $w$.

On the other hand, if $U$ is such that for each $v_i\in\mbox{support}(U)$ there are no two vertices in $U$ each of the form $(v_i,w)$ for some $w$, then two vertices $(v_1,w_1),(v_2,w_2) \in U$ are adjacent if and only if $v_1$ is adjacent $v_2$ in $G$. Since $\mbox{support}(U)$ is acyclic, this means that $U$ must also be acyclic. That is, $U$ is acyclic if and only if for each $v_i\in\mbox{support}(U)$ there are no two vertices in $U$ each of the form $(v_i,w)$ for some $w$. Hence, the contribution of the subsets of the second type to $\AC\left(G[H],x\right)$ is
%\begin{equation*}
$\AC(G,kx) - I(G,kx)$.
%\end{equation*}

Therefore,
%\begin{equation*}
$\AC\left(G[H],x\right) = I\left(G,kx+\binom{k}{2}x^2\right) + \AC(G,kx) - I(G,kx)$.
%\end{equation*}
\end{Proof}

We can now apply this formula to the lexicographic product of a star graph with a complete graph.
Such are the graphs that will give us acyclic roots in the right half-plane (and indeed, we prove something stronger about the location of the limits of the roots).

First we present the Beraha-Kahane-Weiss Theorem which will be a useful tool~\cite{BKW1978}. For a family of (complex) polynomials   $\{f_n(x)\colon\ n\in\mathbb{N}\}$, we say that $z\in\mathbb{C}$ is a \textit{limit of roots} of $\{f_n(x)\colon\ n\in\mathbb{N}\}$  if there is a sequence $\{z_n \colon \ n\in\mathbb{N}\}$ such that $f_n(z_n)=0$ and $z_n\rightarrow z$ as $n\rightarrow\infty$.

\begin{Theorem}[The Beraha-Kahane-Weiss Theorem]
Suppose
$$ f_N(x) = \alpha_1(x)\lambda_1^N(x) + \cdots + \alpha_k(x)\lambda_k^N(x) $$
where the $\alpha_i$'s and the $\lambda_i$'s are polynomials in $x$
such that no $\alpha_i$ is the zero function and for no $i\neq j$ does $\lambda_i = \omega\lambda_j$ where $|\omega| = 1$. Then $z$ is a limit of roots of $f_N$ if and only if either
\begin{enumerate}
	\item for some $\ell \geq 2$, $|\lambda_{i_1}(z)| = |\lambda_{i_2}(z)| = \cdots = |\lambda_{i_\ell}(z)| > |\lambda_j(z)|$ for all $j \neq i_1, i_2, \ldots, i_\ell$, or
	\item for some $i$, $\alpha_i(z) = 0$ and for each $j \neq i$, $|\lambda_i(z)| > |\lambda_j(z)|$.
\end{enumerate}
\end{Theorem}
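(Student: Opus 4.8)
The plan is to prove both directions through a \emph{local} analysis: fix a candidate point $z_0 \in \Complex$, set $M = \max_i |\lambda_i(z_0)|$, and let $D = \{i : |\lambda_i(z_0)| = M\}$ be the set of indices whose term is dominant at $z_0$. The whole argument turns on comparing the growth rates $|\lambda_i(x)|^N$ in a small neighborhood of $z_0$, since for large $N$ the behavior of $f_N$ is governed by the dominant term(s). I would set aside at the outset the finitely many exceptional points where every $\lambda_i$ vanishes (where $f_N(z_0) = 0$ for all $N$) and dispose of them directly.

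For the \emph{necessity} direction, suppose neither condition holds at $z_0$. Failure of (1) forces $|D| = 1$, say $D = \{m\}$, and then failure of (2) forces $\alpha_m(z_0) \neq 0$. Factoring out the dominant term, I would write
$$ \frac{f_N(x)}{\alpha_m(x)\lambda_m(x)^N} = 1 + \sum_{i \neq m} \frac{\alpha_i(x)}{\alpha_m(x)} \left( \frac{\lambda_i(x)}{\lambda_m(x)} \right)^N, $$
valid on a punctured neighborhood of $z_0$ on which $\alpha_m$ and $\lambda_m$ are nonzero and each ratio satisfies $|\lambda_i/\lambda_m| < 1$ uniformly. The sum then tends to $0$ uniformly, so the right-hand side is nonvanishing for all large $N$; hence $f_N$ has no zeros near $z_0$, and $z_0$ is not a limit of roots. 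For the \emph{sufficiency} of condition (2), I would use the same normalization but now with $\alpha_m(z_0) = 0$: the function $g_N(x) = f_N(x)/\lambda_m(x)^N$ converges uniformly on a neighborhood of $z_0$ to $\alpha_m(x)$, which is holomorphic, not identically zero, and vanishes at $z_0$. \emph{Hurwitz's theorem} then supplies zeros of $g_N$ --- equivalently zeros of $f_N$, since $\lambda_m \neq 0$ nearby --- converging to $z_0$, so $z_0$ is a limit of roots.

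The genuinely delicate case, and the one I expect to be the main obstacle, is the \emph{sufficiency} of condition (1), where $\ell \geq 2$ dominant terms tie in modulus at $z_0$. Dividing by $\lambda_{i_1}(x)^N$ reduces $f_N$ (up to a uniformly small error from the strictly subdominant terms) to $h_N(x) = \sum_{j=1}^{\ell} \alpha_{i_j}(x)\, \mu_j(x)^N$ with $\mu_j = \lambda_{i_j}/\lambda_{i_1}$ satisfying $|\mu_j(z_0)| = 1$. The hypothesis that no $\lambda_i$ is a unimodular multiple of another guarantees that each nonconstant $\mu_j$ has nonconstant modulus, so the curves $\{|\mu_j| = 1\}$ are genuine analytic arcs through $z_0$ across which $\log|\mu_j|$ changes sign. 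The key point to establish is that, as $x$ ranges over a small circle about $z_0$ and $N$ ranges over the integers, the phases $N\arg\mu_j(x)$ wind rapidly and force cancellation among the dominant terms. I would make this precise with a \emph{winding-number / argument-principle} argument: show that for a suitable sequence of $N$ the image of a small circle under $h_N$ encloses the origin, so that $h_N$ --- and therefore $f_N$, by \emph{Rouch\'e's theorem} once the subdominant error is absorbed --- has a zero inside, and that these zeros can be taken to converge to $z_0$. The crux is controlling the interaction of the slowly varying amplitudes $\alpha_{i_j}$ with the rapidly winding factors $\mu_j^N$ so that a zero is produced arbitrarily close to $z_0$ rather than merely somewhere along the tie-curve; reducing to the case of exactly two dominant terms and parametrizing along the arc $\{|\lambda_{i_1}| = |\lambda_{i_2}|\}$ is the approach I would pursue to keep this tractable.
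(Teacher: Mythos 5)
You should first note that the paper itself does not prove this theorem: it is quoted as a classical result and attributed to Beraha, Kahane and Weiss~\cite{BKW1978}, so your attempt can only be measured against the known proof in the literature. Your treatment of necessity and of the sufficiency of condition (2) is correct and matches the standard argument: when a single $\lambda_m$ strictly dominates at $z_0$, dividing by $\lambda_m(x)^N$ gives $f_N/\lambda_m^N = \alpha_m + \sum_{i\neq m}\alpha_i(\lambda_i/\lambda_m)^N$, the tail decays geometrically and uniformly on a small closed neighborhood, and either the limit $\alpha_m$ is nonvanishing there (no nearby roots) or $\alpha_m(z_0)=0$ and Hurwitz's theorem manufactures roots converging to $z_0$. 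Setting aside the common zeros of all the $\lambda_i$ at the outset is also the right move.

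The genuine gap is exactly where you flagged it: the sufficiency of condition (1), and your proposed repairs do not close it. First, the reduction ``to exactly two dominant terms'' is invalid for $\ell \geq 3$: all $\ell$ tied terms have the \emph{same} modulus at $z_0$, so along the arc $\{|\lambda_{i_1}|=|\lambda_{i_2}|\}$ the remaining dominant terms are not a ``uniformly small error'' that Rouch\'e can absorb --- they are of full size, and off that arc some of them may strictly dominate the retained pair. Second, ``the phases wind rapidly and force cancellation'' is an expectation, not a mechanism: on a circle of radius $r$ about $z_0$ each $\arg\mu_j$ varies only by $O(r)$, so $N\arg\mu_j$ sweeps an interval of length $O(Nr)$; one must couple $r=r_N\to 0$ with $Nr_N\to\infty$, and even then, to make the vector of phases $\bigl(N\arg\mu_2(z_0),\ldots,N\arg\mu_\ell(z_0)\bigr)$ land near a prescribed point one needs a Kronecker/Weyl simultaneous-approximation argument on the closed subgroup of the torus these phases generate, including the case of rational dependencies among them. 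That Diophantine step, combined with the use of the hypothesis that no $\lambda_i$ is a unimodular multiple of another (which is what makes each $|\mu_j|$ nonconstant and lets one move transversally across the tie-curves), is the technical heart of the Beraha--Kahane--Weiss proof, and it is entirely missing from your plan: without it the image of your small circle under $h_N$ need not enclose the origin for any $N$, and no zero is produced. A further unaddressed degeneracy is that condition (1) does not guarantee $\alpha_{i_j}(z_0)\neq 0$ for the tied indices, so even your two-term winding picture (solving $\mu^N=-\alpha_{i_1}/\alpha_{i_2}$ on an annulus of moduli $e^{\pm cNr}$) needs separate care when a dominant coefficient vanishes at $z_0$. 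In short: your outline is a complete proof only of necessity and of case (2), and a plausible sketch for $\ell=2$ with nonvanishing dominant coefficients; for general $\ell$ it identifies the obstacle but does not overcome it.
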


For the following result, we remind the reader that a {\em lima\c{c}on} is a plane curve of the form $r = a \cos \theta \pm b$ or $r = a \sin \theta \pm b$.

\begin{Theorem} \label{GraphCompRootLimits}
Suppose $G = K_{1,n-1}[K_2]$ is the lexicographic product of a star graph $K_{1,n-1}$ and the complete graph $K_2$. Then as $n \rightarrow \infty$, the roots of $\AC(G,x)$ approach a circle of radius $\frac{1}{2}$ centered at $-\frac{1}{2}$ and the truncated lima\c{c}on given by
$ r = \sqrt{2} - 2\cos\theta $
with $\frac{\pi}{4}<\theta<\frac{7\pi}{4}$ in the complex plane.
\end{Theorem}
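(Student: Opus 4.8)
The plan is to compute $\AC(G,x)$ in closed form using Lemma~\ref{AcycPolyGraphComp} and then apply the Beraha-Kahane-Weiss Theorem. Since $K_{1,n-1}$ is complete bipartite (hence complete multipartite) and is a tree, we have $\AC(K_{1,n-1},x) = (1+x)^n$ and, as an independent set of a star is either the centre alone or an arbitrary subset of the leaves, $I(K_{1,n-1},x) = (1+x)^{n-1} + x$. Taking $k=2$ in Lemma~\ref{AcycPolyGraphComp} (so $\binom{k}{2}=1$) and using $1+2x+x^2 = (1+x)^2$, the three pieces collapse to
\[ \AC(G,x) = (1+x)^{2n-2} + 2x(1+2x)^{n-1} + x^2. \]
Writing $N=n-1$, this has exactly the shape required by the Beraha-Kahane-Weiss Theorem, namely $f_N(x) = \alpha_1\lambda_1^N + \alpha_2\lambda_2^N + \alpha_3\lambda_3^N$ with $\lambda_1 = (1+x)^2$, $\lambda_2 = 1+2x$, $\lambda_3 = 1$ and $\alpha_1 = 1$, $\alpha_2 = 2x$, $\alpha_3 = x^2$. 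No $\alpha_i$ is identically zero, and the $\lambda_i$ have distinct degrees, so no two differ by a unimodular factor; thus the hypotheses are met. (As a sanity check one can confirm $a_0,a_1,a_2$ and the triangle count against a small case such as $n=5$.)

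Next I would dispatch the easy loci. For alternative (2) of the theorem, the only zeros of $\alpha_2$ and $\alpha_3$ are at $x=0$, where $\lambda_1=\lambda_2=\lambda_3=1$, so strict domination fails and alternative (2) contributes nothing. For alternative (1) there are three equal-modulus loci. The locus $|\lambda_2|=|\lambda_3|$ is $|1+2x|=1$, the circle of radius $\tfrac12$ centred at $-\tfrac12$; parametrising $x=-\tfrac12+\tfrac12 e^{i\phi}$ gives $|1+x|=|\cos(\phi/2)|$, so $|\lambda_1|=|1+x|^2<1=|\lambda_2|$ except at $x=0$, and the whole circle is dominant and contributes. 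The locus $|\lambda_1|=|\lambda_3|$ is $|1+x|=1$, but there $|1+2x|^2 = 5-4\cos\phi \geq 1$, so $|\lambda_2|\geq 1=|\lambda_1|$ and domination fails everywhere; this circle contributes nothing.

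The interesting locus is $|\lambda_1|=|\lambda_2|$, i.e.\ $|1+x|^2=|1+2x|$. Writing $x=re^{i\theta}$ and squaring (both sides are nonnegative reals, so this is reversible), $(1+2r\cos\theta+r^2)^2 = 1+4r\cos\theta+4r^2$, which after cancellation and division by $r^2$ becomes $(r+2\cos\theta)^2 = 2$, that is $r=\sqrt 2 - 2\cos\theta$. The constraint $r\geq 0$ forces $\cos\theta\leq \tfrac{1}{\sqrt 2}$, which is exactly $\tfrac{\pi}{4}\leq\theta\leq\tfrac{7\pi}{4}$, yielding the truncation in the statement. Here the domination requirement is $|\lambda_2|=|1+2x|>1=|\lambda_3|$, equivalent to $\mathrm{Re}(x)+|x|^2>0$, i.e.\ to $x$ lying \emph{outside} the circle $|x+\tfrac12|=\tfrac12$. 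A short check (e.g.\ that near the cusp at the origin $|x+\tfrac12|^2-\tfrac14 = r^2 + r/\sqrt2 > 0$) shows the outer loop lies entirely outside this circle while the inner loop lies inside, so domination holds precisely on the outer loop, producing the truncated limaçon and discarding the inner loop. Combining the two contributing loci gives the circle together with the truncated limaçon.

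The main obstacle I anticipate is this last locus: converting $|1+x|^2=|1+2x|$ into clean polar form, and then using the domination inequality to select the outer loop (and hence the exact truncation $\tfrac{\pi}{4}<\theta<\tfrac{7\pi}{4}$) while excluding the inner loop. The convenient fact that the domination threshold $|1+2x|=1$ is the very circle arising from the locus $|\lambda_2|=|\lambda_3|$ makes the bookkeeping transparent: the two curves in the statement meet precisely where domination switches on and off.
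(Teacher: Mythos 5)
Your proposal is correct and follows essentially the same route as the paper: the same closed form $2x(1+2x)^{N}+\left((1+x)^2\right)^{N}+x^2$ obtained via Lemma~\ref{AcycPolyGraphComp}, the same application of the Beraha--Kahane--Weiss Theorem with the same three $(\alpha_i,\lambda_i)$ pairs (up to relabeling), and the same case analysis selecting the circle $|1+2x|=1$ and the locus $|1+x|^2=|1+2x|$ restricted by the domination condition $|1+2x|>1$. (One cosmetic slip: on the outer loop one gets $|x+\tfrac{1}{2}|^2-\tfrac{1}{4}=\tfrac{r^2}{2}+\tfrac{r}{\sqrt{2}}$ rather than $r^2+\tfrac{r}{\sqrt{2}}$, which does not affect the sign conclusion; your explicit inner-loop/outer-loop verification is, if anything, slightly more careful than the paper's appeal to its figures.)
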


\begin{Proof}
We have the following equations:
\begin{equation*}
\begin{split}
	\AC(K_{1,n-1},x) &= (1+x)^n\\
	I(K_{1,n-1},x) &= (1+x)^{n-1}+x.
\end{split}
\end{equation*}
Thus, from Lemma~\ref{AcycPolyGraphComp},
\begin{equation*}
\begin{split}
	\AC\left(K_{1,n-1}[K_2],x\right) &= \AC(K_{1,n-1},2x) + I(K_{1,n-1},2x+x^2) - I(K_{1,n-1},2x)\\
	& = (1+2x)^n + (1+2x+x^2)^{n-1} + 2x + x^2 - (1+2x)^{n-1} - 2x\\
%	& = (1+2x)^n + \left((1+x)^2\right)^{n-1} + x^2 - (1+2x)^{n-1}\\
	& = 2x(1+2x)^{n-1} + \left((1+x)^2\right)^{n-1} + x^2\\
	& = 2x(1+2x)^N + \left((1+x)^2\right)^N + x^2\\
\end{split}
\end{equation*}
where $N=n-1$.
Let
\begin{align*}
	\alpha_1(x) &= 2x & \lambda_1(x) &= 1+2x\\
	\alpha_2(x) &= 1 & \lambda_2(x) &= (1+x)^2\\
	\alpha_3(x) &= x^2 & \lambda_3(x) &= 1
\end{align*}
so that
$$\AC\left(K_{1,N}[K_2],x\right) = \alpha_1(x)\lambda_1^N(x) + \alpha_2(x)\lambda_2^N(x) + \alpha_3(x)\lambda_3^N(x).$$

From the Beraha-Kahane-Weiss Theorem, $z \in \mathbb{C}$ is a limit of roots of $\AC\left(K_{1,N}[K_2],x\right)$ if and only if one of the following hold:
\begin{enumerate}
\item
	\begin{enumerate}
		\item $|\lambda_1(z)|=|\lambda_2(z)|=|\lambda_3(z)|$, {\it i.e.}, $|\lambda_1(z)|=|\lambda_2(z)|=1$
		\item $|\lambda_1(z)|=|\lambda_3(z)|>|\lambda_2(z)|$, {\it i.e.}, $|\lambda_1(z)|=1>|\lambda_2(z)|$
		\item $|\lambda_2(z)|=|\lambda_3(z)|>|\lambda_1(z)|$, {\it i.e.}, $|\lambda_2(z)|=1>|\lambda_1(z)|$, or
		\item $|\lambda_1(z)|=|\lambda_2(z)|>|\lambda_3(z)|$, {\it i.e.}, $|\lambda_1(z)|=|\lambda_2(z)|>1$
	\end{enumerate}
or
\item
	\begin{enumerate}
		\item $|\lambda_1(z)|>\max\{|\lambda_2(z)|,|\lambda_3(z)|\}$, {\it i.e.}, $|\lambda_1(z)|>\max\{|\lambda_2(z)|,1\}$, and $\alpha_1(z) = 0$
		\item $|\lambda_2(z)|>\max\{|\lambda_1(z)|,|\lambda_3(z)|\}$, {\it i.e.}, $|\lambda_2(z)|>\max\{|\lambda_1(z)|,1\}$, and $\alpha_2(z) = 0$, or
		\item $|\lambda_3(z)|>\max\{|\lambda_1(z)|,|\lambda_2(z)|\}$, {\it i.e.}, $1>\max\{|\lambda_1(z)|,|\lambda_2(z)|\}$, and $\alpha_3(z) = 0$.
	\end{enumerate}
\end{enumerate}

It is easy to check that all of cases 2a, 2b and 2c lead to contradictions, so if $z$ is a limit of roots, it must satisfy one of the conditions in case 1.

%To begin dissecting the cases in 1, notice that
%\begin{equation}
%\begin{split}
%	1 &\geq |1+z|^2\\
%	\iff 1 &\geq |1+z|\\
%	\iff 1 &= |z-(-1)|\\
%	\iff z &\mbox{ is within a distance of 1 from $-1$}
%\end{split}
%\end{equation}
%and
%\begin{equation}
%\begin{split}
%	1 &\geq |1+2z|\\
%	\iff \, 1 &=2\left|z-\left(-\frac{1}{2}\right)\right|\\
%	\iff \frac{1}{2} &\geq \left|z-\left(-\frac{1}{2}\right)\right|\\
%	\iff \, z &\mbox{ is within a distance of $\frac{1}{2}$ from $-\frac{1}{2}$}.
%\end{split}
%\end{equation}
%That is, if $z$ is such that
%\begin{itemize}
%	\item $1 = |1+z|^2$, then $z$ is on the circle of radius 1 centered at $-1$ in the complex plane.
%	\item $1 > |1+z|^2$, then $z$ is in the open circle of radius 1 centered at $-1$ in the complex plane.
%	\item $1 = |1+2z|$, then $z$ is on the circle of radius $\frac{1}{2}$ centered at $-\frac{1}{2}$ in the complex plane.
%	\item $1 > |1+2z|$, then $z$ is in the open circle of radius $\frac{1}{2}$ centered at $-\frac{1}{2}$ in the complex plane.
%\end{itemize}

Case 1a is satisfied if and only if $1 = |1+2z|$ and $1 = |1+z|^2$, {\it i.e.}, when $z$ lies on both
the circle of radius $\frac{1}{2}$ centered at $-\frac{1}{2}$
and on
the circle of radius 1 centered at $-1$.
These two circles only intersect at the origin, as shown in Figure~\ref{intersectingcircles}. Thus, $z=0$ is the only limit of roots fulfilling case 1a.

\begin{figure} [h!]
\centering
	\includegraphics[width=0.40\linewidth]{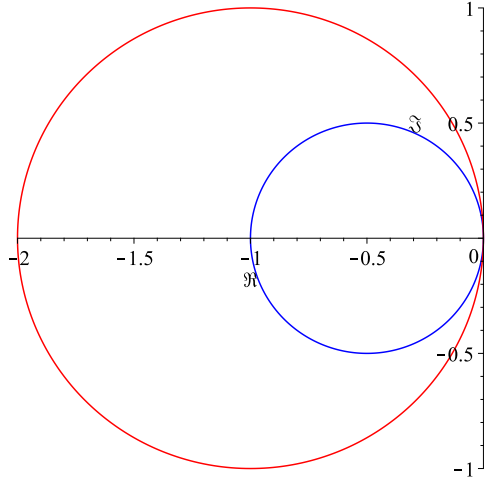}
	\caption{The circle of radius 1 centered at $-1$ (in red) and the circle of radius $\frac{1}{2}$ centered at $-\frac{1}{2}$ (in blue).}
	\label{intersectingcircles}
\end{figure}

Next, $z$ satisfies case 1b if and only if $1 = |1+2z|$ and $1 > |1+z|$. That is, when $z$ lies on the circle of radius $\frac{1}{2}$ centered at $-\frac{1}{2}$ and $z$ lies in the open circle of radius $1$ centered at $-1$. As shown in Figure~\ref{intersectingcircles}, the circle of radius $\frac{1}{2}$ centered at $-\frac{1}{2}$ is entirely contained within the circle of radius $1$ centered at $-1$, except for the point at the origin.
However, $z=0$ is also a limit of roots from case 1a. So, $z$ is a limit of roots satisfying case 1a or case 1b if and only if $z$ lies on the circle of radius $\frac{1}{2}$ centered at $- \frac{1}{2}$.

If $z$ satisfies case 1c, then $1 = |1+z|^2$ and $1 > |1+2z|$. This means that $z$ lies on the circle of radius 1 centered at $-1$ and in the open circle of radius $\frac{1}{2}$ centered at $-\frac{1}{2}$. However, the circle of radius $\frac{1}{2}$ centered at $-\frac{1}{2}$ is entirely contained within the circle of radius 1 centered at $-1$, so this is a contradiction. Hence, there is no $z$ that satisfies case 1c.

Finally, $z$ satisfies case 1d if and only if $|1+2z| = |1+z|^2$ and $|1+2z| > 1$. Since $|1+2z| > 1$, $z$ must lie outside the circle of radius $\frac{1}{2}$ centered at $-\frac{1}{2}$.
Furthermore, $|1+2z| = |1+z|^2$ if and only if $|1+2z|^2 = |1+z|^4$. Letting $z=a+bi$ with $a,b \in \mathbb{R}$, we have,
\begin{equation*}
	|1+2z|^2 = (1+2a)^2 + (2b)^2	= 1 + 4a + 4a^2 + 4b^2
\iffalse
\begin{split}
	|1+2z|^2 &= (1+2a)^2 + (2b)^2\\
	&= 1 + 4a + 4a^2 + 4b^2\\
\end{split}
\fi
\end{equation*}
and
\begin{equation*}
	|1+z|^4 = \left((1+a)^2 + b^2\right)^2
%	= (1+2a+a^2+b^2)^2\\
	 = 1 + 6a^2 + a^4 + b^4 + 4a  + 2b^2 + 4a^3 + 4ab^2 + 2a^2b^2.
\iffalse
\begin{split}
	|1+z|^4 &= \left((1+a)^2 + b^2\right)^2\\
	&= (1+2a+a^2+b^2)^2\\
	& = 1 + 6a^2 + a^4 + b^4 + 4a \\
	& \qquad + 2b^2 + 4a^3 + 4ab^2 + 2a^2b^2.
\end{split}
\fi
\end{equation*}
So,
\begin{equation*}
%\begin{split}
	1 + 4a + 6a^2 + 4b^2  = 1 + 4a^2 + a^4 + b^4 + 4a  + 2b^2 + 4a^3 + 4ab^2 + 2a^2b^2.
%\end{split}
\end{equation*}
Rearranging this expression, we get
\begin{equation*}
%\begin{split}
	2a^2 + 2b^2 = 4a^2 + a^4 + b^4 + 4a^3 + 4ab^2 + 2a^2b^2 = (a^2 + b^2 + 2a)^2.
%\end{split}
\end{equation*}
This equation describes a lima\c{c}on. Thus, $z$ is a limit of roots satisfying case 1d if and only if $z$ lies on the lima\c{c}on in the complex plane whose equation in Cartesian form is
\begin{equation*}
	\sqrt{2}^2(a^2 + b^2) = (a^2 + b^2 + 2a)^2.
\end{equation*}
and outside the circle of radius $\frac{1}{2}$ centered at $-\frac{1}{2}$ (see Figure~\ref{limaconandcircle}).
%(For further discussion of lima\c{c}ons see Robert C.\ Yates' {\it Curves and Their Properties} \cite{yates1974curves}.)

\begin{figure}[h!]
\centering
	\includegraphics[width=0.40\linewidth]{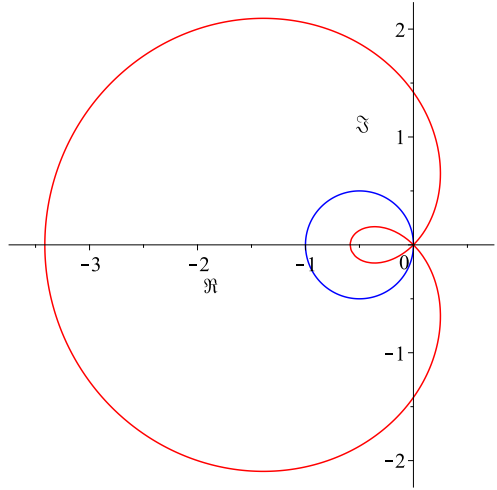}
	\caption{The lima\c{c}on given by $\sqrt{2}^2(a^2 + b^2) = (a^2 + b^2 + 2a)^2$ (in red) and the circle of radius $\frac{1}{2}$ centered at $-\frac{1}{2}$ (in blue).}
	\label{limaconandcircle}
\end{figure}

In polar form, the equation of this lima\c{c}on is
%\begin{equation*}
$r = \sqrt{2} - 2\cos\theta$.
%\end{equation*}
So, $z$ is a limit of roots satisfying case 1d if and only if $z$ lies on the curve given by
%\begin{equation*}
$r = \sqrt{2} - 2\cos\theta$
%\end{equation*}
with $\frac{\pi}{4}<\theta<\frac{7\pi}{4}$, shown in Figure~\ref{restrictedlimacon}.

\begin{figure}[h!]
\centering
	\includegraphics[width=0.40\linewidth]{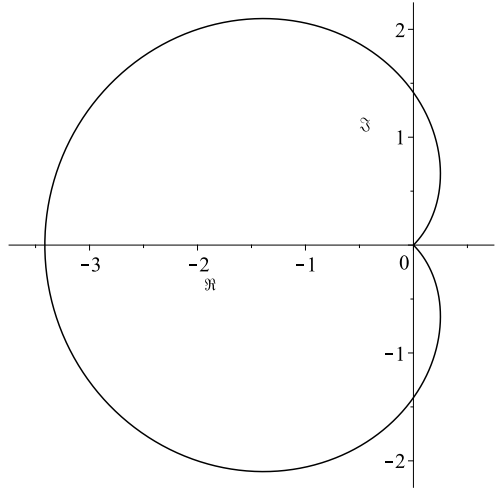}
	\caption{The curve given by $r = \sqrt{2} - 2\cos\theta$ with $\frac{\pi}{4}<\theta<\frac{7\pi}{4}$.}
	\label{restrictedlimacon}
\end{figure}

Since $z$ is a limit of roots if and only if $z$ satisfies one of cases 1a, 1b, 1c, or 1d, $z$ is a limit of roots if and only if $z$ lies on the circle of radius $\frac{1}{2}$ centered at $-\frac{1}{2}$ (as in cases 1a and 1b) or on the curve given by $r = \sqrt{2} - 2\cos\theta$ with $\frac{\pi}{4}\leq\theta\leq\frac{7\pi}{4}$ (as in cases 1a and 1d). These limits of roots are shown in Figure~\ref{limrootsSK2}.
\begin{figure}[h!]
\centering
	\includegraphics[width=0.40\linewidth]{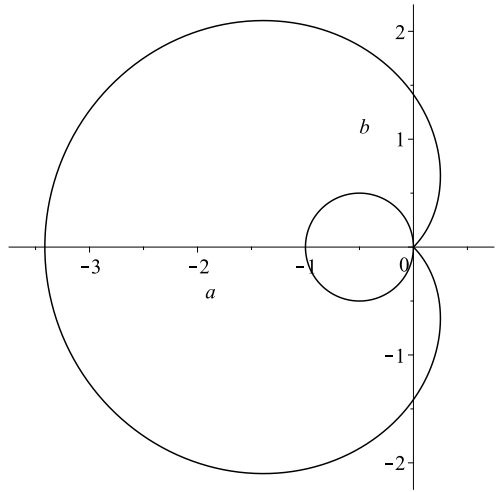}
	\caption{The limits of roots of $\AC\left(K_{1,n-1}[K_2],x\right)$.}
	\label{limrootsSK2}
\end{figure}
\end{Proof}

With this strong result, we can prove that we are guaranteed that there exist acyclic polynomials with roots in the right half-plane.

\begin{Theorem}\label{roots6}
For all sufficiently large $n$, $\AC\left(K_{1,n-1}[K_2],x\right)$ has a root with a positive real part.
\end{Theorem}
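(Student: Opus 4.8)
The plan is to read the conclusion directly off the strong description of limit points supplied by Theorem~\ref{GraphCompRootLimits}, the only genuinely new observation being that one of those limit curves crosses into the open right half-plane. From Theorem~\ref{GraphCompRootLimits}, the limits of roots of the family $\{\AC(K_{1,n-1}[K_2],x) : n \in \mathbb{N}\}$ include the truncated lima\c{c}on $r = \sqrt{2} - 2\cos\theta$ with $\frac{\pi}{4} < \theta < \frac{7\pi}{4}$. A point of this curve at angle $\theta$ has real part
\[ \mathrm{Re}(z) = r\cos\theta = (\sqrt{2} - 2\cos\theta)\cos\theta = \cos\theta\,\bigl(\sqrt{2} - 2\cos\theta\bigr). \]
Setting $c = \cos\theta$, this equals $c(\sqrt{2} - 2c)$, which is strictly positive exactly when $0 < c < \frac{\sqrt{2}}{2}$, i.e.\ for $\theta \in \bigl(\frac{\pi}{4}, \frac{\pi}{2}\bigr)$. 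Since this subinterval lies inside the admissible range $\frac{\pi}{4} < \theta < \frac{7\pi}{4}$, the lima\c{c}on genuinely protrudes into the open right half-plane.

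Next I would fix any point $z_0$ on the lima\c{c}on with $\frac{\pi}{4} < \arg z_0 < \frac{\pi}{2}$, so that $\mathrm{Re}(z_0) > 0$. By Theorem~\ref{GraphCompRootLimits} this $z_0$ is a limit of roots of the family. It is worth noting here that such a $z_0$ lies strictly outside the circle $|z + \frac{1}{2}| = \frac{1}{2}$, which is contained in the closed left half-plane; thus the inequality $|1 + 2z_0| > 1$ needed in case~1d of the Beraha--Kahane--Weiss analysis holds automatically, confirming that the right-half-plane arc of the lima\c{c}on really does consist of bona fide limit points. Then, by the definition of a limit of roots, there is a sequence $\{z_n : n \in \mathbb{N}\}$ with $\AC(K_{1,n-1}[K_2], z_n) = 0$ for every $n$ and $z_n \to z_0$.

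Finally, since the real part is continuous and $\mathrm{Re}(z_0) > 0$, there is an integer $N$ such that $\mathrm{Re}(z_n) > 0$ for all $n \geq N$. Hence for every sufficiently large $n$ the polynomial $\AC(K_{1,n-1}[K_2],x)$ has the root $z_n$ with positive real part, which is exactly the assertion of the theorem. The computation is entirely routine; the only point requiring a little care, and the crux of the argument, is the geometric observation that the lima\c{c}on of Theorem~\ref{GraphCompRootLimits} enters $\{z : \mathrm{Re}(z) > 0\}$ rather than lying wholly in the left half-plane like the accompanying circle.
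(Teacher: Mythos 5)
Your proof is correct and follows essentially the same route as the paper: both deduce the result from Theorem~\ref{GraphCompRootLimits} by locating a point of the truncated lima\c{c}on with positive real part and then passing from this limit of roots to actual roots nearby. The only difference is cosmetic --- the paper exhibits the single angle $\theta = \frac{\pi}{3}$ (giving real part $\frac{\sqrt{2}-1}{2} > 0$), whereas you characterize the whole arc $\theta \in \bigl(\frac{\pi}{4}, \frac{\pi}{2}\bigr)$ on which $\cos\theta\,(\sqrt{2} - 2\cos\theta) > 0$ and then argue by continuity of the real part.
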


\begin{Proof}
From Theorem~\ref{GraphCompRootLimits}, we know that for large enough $n$, there exists a root of $\AC\left(K_{1,n-1}[K_2],x\right)$ arbitrarily close to the truncated lima\c{c}on given by
$ r = \sqrt{2} - 2\cos\theta $
with $\frac{\pi}{4}<\theta<\frac{7\pi}{4}$.

Take $\theta = \frac{\pi}{3}$. Then
$r  = \sqrt{2} - 2\cos\left(\frac{\pi}{3}\right)
	 = \sqrt{2} - 1$.
\iffalse
\begin{equation*}
\begin{split}
	r & = \sqrt{2} - 2\cos\left(\frac{\pi}{3}\right)\\
	& = \sqrt{2} - 2(\frac{1}{2})\\
	& = \sqrt{2} - 1.
\end{split}
\end{equation*}
\fi
So $\left(\sqrt{2} - 1,\frac{\pi}{3}\right)$ is a point on this curve. In Cartesian coordinates, this means
$x  = r\cos\theta
	 = \frac{\sqrt{2} - 1}{2}$.
\iffalse
\begin{equation}
\begin{split}
	x & = r\cos\theta\\
	& = \left(\sqrt{2} - 1\right)\cos\left(\frac{\pi}{3}\right)\\
	& = \left(\sqrt{2} - 1\right)\frac{1}{2}.
\end{split}
\end{equation}
\fi

Hence there exist roots of $\AC\left(K_{1,n-1}[K_2],x\right)$ whose real components are arbitrarily close to $\frac{\sqrt{2} - 1}{2}$.
Since $\frac{\sqrt{2} - 1}{2} > 0$, there must exist positive roots of $\AC\left(K_{1,n-1}[K_2],x\right)$ for some $n$.
\end{Proof}

%\section{Closing Remarks}\label{Section-ClosingRemarks}
%\subsection{Applications to Decycling}\label{Section-DecyclingApplications}
%
%
%The overwhelming focus of interest when it comes to decycling of graphs has been to determine the value of $\nabla(G)$
%as well as to find an corresponding decycling set of size $\nabla(G)$.
%We can instead examine the full range of decycling sets in a graph
%by defining the {\it decycling polynomial} $D(G,x)$ of a graph $G$ as the generating function for the number of decycling sets of size $i$:
%$$D(G,x) = \sum_{i \geq 0} d_i x^i,$$
%where $d_i$ denotes the number of decycling sets of cardinality $i$.
%This decycling polynomial is, of course, closely related to the acyclic polynomial,
%since, if $G$ has order $n$ then each coefficient $d_i$ equals $a_{n-i}$, so
%$$D(G,x) = x^n \AC\left(G,\frac{1}{x}\right).$$
%
%
%[xxx say more]

\section{Discussion and Open Problems}
\label{Section-OpenProblems}

The results of the previous section point to two types of open problems -- those that talk about the coefficients of acyclic polynomials, and those that talk about the location and nature of the acyclic roots -- and the problems are not unconnected.

In Section~\ref{Sec-SmallAcyclicDimension} we characterized those graphs having acyclic polynomials of degree $3$,
and as well we studied the coefficients and roots of their acyclic polynomials.  For higher degree we can ask:
% several questions, beginning with:

\begin{Question}
For fixed acyclic dimension $\MF(G) \geq 4$, is there a characterization of graphs $G$ with acyclic polynomials of degree $\MF(G)$?
\end{Question}

\subsection{Unimodality}

Recall that a real polynomial $f(x) = c_0 + c_1x + c_2x^2 + \cdots$ is said to be {\em unimodal} if there exists an integer $t$ such that
$c_0 \leq c_1 \leq \cdots \leq c_{t-1} \leq c_t$ and $c_t \geq c_{t+1} \geq c_{t+2} \geq \cdots$.
Clearly $\AC(G,x)$ is unimodal in cases when $G$ is a tree or a complete graph.

Graphic polynomials (that is, generating functions for the acyclic {\em edge sets}) were recently proved to always be unimodal~\cite{Anari,BrandenHuh,lenz},
thereby settling what had been an outstanding conjecture for matroids in general for many years. 
Domination polynomials of graphs have also been conjectured to be unimodal
(see \cite{AlikhaniPeng2014} as well as \cite{BeatonBrown} for some recent progress).

For the acyclic polynomial, it is not true that $\AC(G,x)$ is unimodal for every graph $G$.
When $m \geq 20$, the acyclic polynomial $\AC(K_m + \overline{K}_6,x)$ is not unimodal because $a_3 < a_2$ and $a_3 < a_4$.
For example, observe that
$\AC(K_{20} + \overline{K}_6,x) = 1 + 26x + 325x^2 + 320x^3 + 415x^4 + 306x^5 + 121x^6 + 20x^7$.
As another example of a class of graphs with acyclic polynomials that are not unimodal,
let $H$ be complement of the join of $a$ disjoint 4-cycles, and let $G = H + \overline{K_b}$.
So $\AC(G,x) =
1
+(4a+b)x
+\binom{4a+b}{2}x^2
+\left(\binom{b}{3}+4\binom{b}{2}a+4ab+4a+4a(4a-4)\right)x^3
+\left(\binom{b}{4}+4\binom{b}{3}a+a\right)x^4
+\sum_{i=5}^b\left(\left(\binom{b}{i}+4\binom{b}{i-1}a\right)x^i\right)$.
Observe that when $a \geq 6$ and $b=8$, $\AC(G,x)$ is not unimodal (because $a_4$ is less than $a_3$ and $a_5$).
We ask:

\begin{Question}
Which classes of graphs have unimodal acyclic polynomials?  And which ones do not?
\end{Question}

Inspired by questions about unimodality of independence polynomials
of trees~\cite{AMSE1987} and of bipartite graphs~\cite{LevitMandrescu2006},
we ask:

%Question~\ref{Conj-BipartiteUnimodal} is of interest on account of similar questions pertaining
%to independence polynomials of trees~\cite{AMSE1987} and of bipartite graphs~\cite{LevitMandrescu2006}.

\begin{Question}
\label{Conj-BipartiteUnimodal}
If a graph $G$ is bipartite then is $\AC(G,x)$ unimodal?
\end{Question}

As pointed out by one of the referees, the entire setting for acyclic polynomials can be embedded in a much broader setting. Suppose that ${\mathcal H}$ is a {\em hereditary} class of graphs, that is, one closed under induced subgraphs (and isomorphism). For a graph $G$ we can analogously define a generating function $P_{\mathcal H}(G,x) = \sum x^{|S|}$, where the sum is taken over all vertex subsets $S$ of $V(G)$ that induce a subgraph in  ${\mathcal H}$ (when $G$ consists of a forest, $P_{\mathcal H}(G,x) = \AC(G,x)$). A number of the results we have presented for acyclic polynomials can be carried over -- in particular, that of Theorem~\ref{roots4}, when a graph has all real acyclic roots (see \cite{MakowskyRakita}). 
In this light, it would also be interesting to find a hereditary family ${\mathcal H}$ for which $P_{\mathcal H}(G,x)$ always has unimodal coefficient sequences, but may have nonreal roots.

%%%%%%%%%%%%%%%%%%%%%%%%%%%%%%%%%%%%
\iffalse
XXX... we might actually conjecture about triangle-free graphs, for which we have not found any non-unimodal examples.
I'm not sure if there are examples of non-unimodal independence polynomials for some triangle-free non-bipartite graphs though.
Anyway, it's safer to pose the conjecture for bipartite graphs.
\fi
%%%%%%%%%%%%%%%%%%%%%%%%%%%%%%%%%%%%

\subsection{Location and nature of acyclic roots}

A well known theorem due to Newton (see, for example, \cite{comtet}) states that if a polynomial with positive coefficients has all of its roots real (and on the negative real axis), then its coefficient sequence is unimodal. This result highlights the fact that the nature and location of the roots can inform the unimodality of the coefficient sequence of a real polynomial. In Section~\ref{Section-AcyclicRootsRightHalfPlane} we showed that for large enough $n$, the graph $K_{1,n-1}[K_2]$ has an acyclic root in the right half-plane.
This is not the only class of graphs with this property.
When $m \geq 18$, we find that
$\AC(K_{m} + \overline{K}_6,x) =
1+(m+6)x+\binom{m+6}{2}x^2+(15m+20)x^3+(20m+15)x^4+(15m+6)x^5+(6m+1)x^6+mx^7$
also has roots in the right half-plane.
These examples lead us to ask:

\iffalse
THE FOLLOWING IS FALSE!!!
As another example, let $F_t$ denote the union of $t$ disjoint edges and let $G_t$ denote $F_t + F_t$ ({\it i.e.}, the join of $F_t$ with a second copy of $F_t$).
Then for each $t \in \{6,7,\ldots,13\}$ we find that $\AC(G_t,x)$ has
coefficients such that $a_5 < a_4$ and $a_5 < a_6$.
\fi

\begin{Question}
If a graph $G$ has an acyclic polynomial that is not unimodal, then does it have acyclic roots in the right half-plane?
\end{Question}

Continuing on the topic of acyclic roots, we pose the following problems:

\begin{Question}
What can be said about the nature and location of the roots of acyclic polynomials of degree $4$ and higher?
\end{Question}

\begin{Question}
%\label{Q-ClosureAcyclicRoots}
Are there open sets in ${\mathbb C}$ that are free of acyclic roots? What is the closure of the acyclic roots? Is the closure of the {\em real} acyclic roots equal to $(-\infty,0]$?
\end{Question}

\begin{Question}
What is the maximum modulus of an acyclic root of a graph of order $n$?
\end{Question}

\section{Acknowledgements}
Authors J.I.\ Brown and D.A.\ Pike acknowledge research support from NSERC Discovery Grants
RGPIN-2018-05227
and RGPIN-2016-04456, respectively. The authors would also like to thank the anonymous referees for their insightful comments.
We especially thank one of the referees for pointing out that acyclic polynomials can be expressed within 
monadic second-order logic, 
for providing us with the content for Section~\ref{Sec:NotTutte},
and also for noting that Theorem~\ref{roots4} can be extended to hereditary families of graphs.

%\end{doublespace}

%%%%%%%%%%%%%%%%%%%%%%%%%%%%%%%%%%%%%%%%%%%%%%
%%%%%%%%%%%%%%%%%%%%%%%%%%%%%%%%%%%%%%%%%%%%%%
%%%%%%%%%%%%%%%%%%%%%%%%%%%%%%%%%%%%%%%%%%%%%%
%%%%%%%%%%%%%%%%%%%%%%%%%%%%%%%%%%%%%%%%%%%%%%


\begin{thebibliography}{99}



\bibitem{AMSE1987}Y.\ Alavi, P.J.\ Malde, A.J.\ Schwenk and P.\ Erd\H{o}s.
The vertex independence sequence of a graph is not constrained.
{\em Congressus Numerantium} 58 (1987) 15--23.

\bibitem{AB1979}M.O.\ Albertson and D.\ Berman.
A conjecture on planar graphs,
in: {\em Graph Theory and Related Topics}, eds.\ J.A.\ Bondy and U.S.R.\ Murty, Academic Press, New York, 1979, p.\ 357.

\bibitem{AlikhaniPeng2014}
S.\ Alikhani and Y.H.\ Peng.
Introduction to domination polynomial of a graph.
{\em Ars Combin.} 114 (2014) 257--266.


\bibitem{AKS1987}N.\ Alon, J.\ Kahn and P.D.\ Seymour.
Large induced degenerate subgraphs.
{\em Graphs and Combinatorics} 3 (1987) 203--211.

\bibitem{Anari}N.\ Anari, K.\ Liu, S.\ Oveis Gharan and C.\ Vinzant.
Log-concave polynomials III: Mason's ultra-log-concavity conjecture for independent sets of matroids.
arXiv:1811.01600




\bibitem{ASV79}
N.\ Anderson, E.B.\ Saff and R.S.\ Varga.
On the Enestr\"{o}m--Kakeya theorem and its sharpness.
{\em Lin.\ Alg.\ Appl.} 28 (1979) 5--16.



\bibitem{agm08}I.\ Averbouch, B.\ Godlin and J.A.\ Makowsky.
A most general edge elimination polynomial, in: \textit{International Workshop on Graph-Theoretic Concepts in Computer Science}, Springer, New York, 2008, pp.\ 31--42.

\bibitem{agm10}I.\ Averbouch, B.\ Godlin and J.A.\ Makowsky.
An extension of the bivariate chromatic polynomial. {\em Europ.\ J.\ Combin.} 31 (2010) 1--17.




\bibitem{barvinok}
A.\ Barvinok. Computing the permanent of (some) complex matrices. {\em Found.\ Comput.\ Math.} 16 (2016) 329--342.

\bibitem{BB2002}S.\ Bau and L.W.\ Beineke.
The decycling number of graphs.
{\em Australasian J.\ Combinatorics} 25 (2002) 285--298.

\bibitem{BeatonBrown}
I.\ Beaton and J.I.\ Brown.
On the unimodality of domination polynomials.
arXiv:2012.11813



\bibitem{BV1997}L.W.\ Beineke and R.C.\ Vandell.
Decycling graphs.
{\em J.\ Graph Theory} 25 (1997) 59--77.

\bibitem{BKW1978}
S.\ Beraha, J.\ Kahane and N.\ Weiss.
Limits of zeros of recursively defined families of polynomials,
in: Studies in foundations and combinatorics
(G.\ Rota, ed.) Academic Press, New York (1978) 213--232.

\bibitem{Borodin1979}
O.V.\ Borodin.
On acyclic colorings of planar graphs.
{\em Discrete Math.} 25 (1979) 211--236.

\bibitem{BrandenHuh}
P.\ Br{\"a}nd{\'e}n and J.\ Huh.
Hodge-Riemann relations for Potts model partition functions.
arXiv:1811.01696



\bibitem{brenti}
F.\ Brenti, G.F.\ Royle and D.G.\ Wagner.
Location of zeros of chromatic and related polynomials of graphs.
{\em Canad.\ J.\ Math.} 46 (1994) 55--80.

\bibitem{browncolbourn}
J.I.\ Brown and C.J.\ Colbourn.
Roots of the reliability polynomial.
{\em SIAM J.\ Discrete  Math.}  5 (1992) 571--585.

\bibitem{BrownHickman2002}J.I.\ Brown and C.A.\ Hickman.
On chromatic roots with negative real part.
{\em Ars Combin.} 63 (2002) 211--221.

\iffalse
\bibitem{BHSW2001}J.I.\ Brown, C.\ Hickman, A.D.\ Sokal and D.G.\ Wagner.
On the chromatic roots of generalized theta graphs.
{\em J.\ Combinatorial Theory -- Series B} 83 (2001) 272--297.
\fi

\bibitem{BrownHoshino2009}J.\ Brown and R.\ Hoshino.
Independence polynomials of circulants with an application to music.
{\em Discrete Math.} 309 (2009) 2292--2304.

\bibitem{brownlucas}
J.I.\ Brown and L.\ Mol.
On the roots of all-terminal reliability polynomials.
{\em Discrete   Math.}   340 (2017) 1287--1299.

\bibitem{chudseymour}
M.\ Chudnovsky and P.\ Seymour.
The roots of the independence polynomial of a clawfree graph.
{\em J.\ Combin.\ Theory Ser.\ B} 97 (2007) 350--357.

\bibitem{colbook}
C.J. Colbourn.
\textit{The combinatorics of network reliability}, Oxford  University Press, New York, 1987.

\bibitem{comtet}
L.\ Comtet.
{\em Advanced Combinatorics},
Reidel Pub.\ Co., Boston, 1974.

\bibitem{CLSB1981}D.G.\ Corneil, H.\ Lerchs and L.\ Stewart Burlingham.
Complement reducible graphs.
{\em Disc.\ Appl.\ Math.} 3 (1981) 163--174.

\bibitem{CPS1985}D.G.\ Corneil, Y.\ Perl and L.K.\ Stewart.
A linear recognition algorithm for cographs.
{\em SIAM J.\ Comput.} 14 (1985) 926--934.


\bibitem{CEbook}
B.\ Courcelle and J.\ Engelfriet.
\textit{Graph Structure and Monadic Second-Order Logic: A Language-Theoretic Approach},
Cambridge University Press, Cambridge, 2012.



\bibitem{makowskylogic}
B.\ Courcelle, J.A.\ Makowsky and U.\ Rotics.
Linear time solvable optimization problems on graphs of bounded clique-width. {\em Th.\ Comput.\ Syst.} 33 (2000) 125--150.

\bibitem{dongbook}
F.M.\ Dong, K.M.\ Koh and K.L.\ Teo.
\textit{Chromatic Polynomials and Chromaticity Of Graphs}, World Scientific, London, 2005.

\bibitem{EMM2011}J.A.\ Ellis-Monaghan and C.\ Merino.
{Graph Polynomials and Their Applications II:  Interrelations and Interpretations},
in: Structural Analysis of Complex Networks (ed.\ M.\ Dehmer)
Springer, Dordrecht (2011) 257--292.

\bibitem{enestrom}
G.\ Enestr\"{o}m.
H\"{a}rledning af en allm\"{a}n formel f\"{o}r antalet pension\"{a}rer, som vid en godtycklig tidpunkt f\"{o}refinnas inom en sluten pensionskassa.
{\em \"{O}fversigt af Kongl. Svenska Vetenskaps-Akademien F\"{o}rhandlingar} 50 (1893) 405--415.

\bibitem{FPR1999}
P.\ Festa, P.M.\ Pardalos and M.G.C.\ Resende.
{\em Feedback set problems}, in Handbook of combinatorial optimization, Supplement Vol.\ A,
Kluwer Acad.\ Publ., Dordrecht, 1999, pp.~209--258.

\bibitem{GXWZY2015}
L.\ Gao, X.\ Xu, J.\ Wang, D.\ Zhu and Y.\ Yang.
The decycling number of generalized Petersen graphs.
{\em Disc.\ Appl.\ Math.} 181 (2015) 297--300.

\bibitem{Goodall2018}
A.\ Goodall, M.\ Hermann, T.\ Kotek, J.A.\ Makowsky and S.D.\ Noble.
On the complexity of generalized chromatic polynomials.
{\em Adv.\ Appl.\ Math.} 94 (2018) 71--102.

\bibitem{GutmanHarary1983}I.\ Gutman and F.\ Harary.
Generalizations of the matching polynomial.
{\em Utilitas Mathematica\/} 24 (1983) 97--106.

\bibitem{Hen74}
P.\ Henrici. {\em Applied and Computational Complex Analysis Vol. 1}, John Wiley and Sons, New York, 1974.

\bibitem{Hol03}
O.\ Holtz. Hermite-Biehler, Routh-Hurwitz, and total positivity. {\em Lin.\ Alg.\ Appl.} 372 (2003) 105--110.

\bibitem{irving}
R.S.\  Irving.
{\em Integers, polynomials, and rings}, Springer-Verlag, New York, 2004.

\bibitem{kakeya}
S.\ Kakeya.
On the limits of the roots of an algebraic equation with positive coefficients.
{\em Tohoku Math.\ J.} 2 (1912) 140--142.

\bibitem{Karp1972}R.M.\ Karp. Reducibility among combinatorial problems,
in: Complexity of Computer Computations (R.E.\ Miller and J.W.\ Thatcher, ed.) Plenum Press, New York-London (1972) 85--103.

\bibitem{Kogan}S.\ Kogan.
New results on large induced forests in graphs.
arXiv:1910.01356

\bibitem{lenz}
M.\ Lenz.
The $f$-vector of a representable matroid complex is log-concave.
{\em Adv.\ Appl.\ Math.}  51 (2013) 543--545.

\bibitem{LevitMandrescu2005}V.E.\ Levit and E.\ Mandrescu.
The independence polynomial of a graph -- a survey,
in: Proceedings of the 1st International Conference on Algebraic Informatics
(B.\ Bozapalidis and G.\ Rahonis, ed.) Aristotle Univ.\ Thessaloniki, Thessaloniki (2005) 233--254.

\bibitem{LevitMandrescu2006}V.E.\ Levit and E.\ Mandrescu.
Partial unimodality for independence polynomials of K\"{o}nig-Egerv\'{a}ry graphs.
{\em Congressus Numerantium} 179 (2006) 109--119.

\iffalse
\bibitem{LM2011}V.E.\ Levit and E.\ Mandrescu.
A simple proof of an inequality connecting the alternating number of independent sets and the decycling number.
{\em Discrete Math.\/} 311 (2011) 1204--1206.
\fi

\bibitem{LL1999}
D-M.\ Li and Y-P. Liu.
A polynomial algorithm for finding the minimum feedback vertex set of a 3-regular simple graph.
{\em Acta Math.\ Sci.} 19 (1999) 375--381.


\bibitem{Linial1986}
N.\ Linial.
Hard enumeration problems in geometry and combinatorics.
{\em SIAM J.\ Algebraic Discrete Methods} 7 (1986) 331--335.


\bibitem{MakowskyRakita}
J.A.\ Makowsky and V.\ Rakita.
Almost unimodal and real-rooted graph polynomials.
arXiv:2102.00268v2


\bibitem{MRB2014}
J.A.\ Makowsky, E.V.\ Ravve and N.K.\ Blanchard.
On the location of roots of graph polynomials.
{\em European J.\ Combinatorics} 41 (2014) 1--19.


\bibitem{Mar14}
M. Marden.
{\em Geometry of polynomials} (3rd ed.),
Amer.\ Math.\ Soc., Providence, 2014.


\bibitem{michelen}
M.\ Michelen and J.\ Sahasrabudhe.
Central limit theorems and the roots of probability generating functions.
{\em Adv.\ in Math.} 358 (2019) 106840.
(27 pages)
% https://doi.org/10.1016/j.aim.2019.106840.


\bibitem{PikeZou2005}D.A.\ Pike and Y.\ Zou.
Decycling Cartesian products of two cycles.
{\em SIAM J.\ Discrete Math.} 19 (2005) 651--663.

\bibitem{roylesokal}
G.\ Royle and A.D.\ Sokal.
The Brown-Colbourn conjecture on zeros of reliability polynomials is false.
{\em J.\ Combin.\ Theory Ser.\ B}  91 (2004)  345--360.


\bibitem{ShiXu2017}L.\ Shi and H.\ Xu.
Large induced forests in graphs.
{\em J.\ Graph Theory} 85 (2017) 759--779.


\bibitem{Sperner1928}
E.\ Sperner.  Ein satz \"{u}ber untermengen einer endlichen menge.
{\em Math. Zeit.} 27 (1928) 544--548.


\bibitem{stanley}
R.P.\ Stanley.  Log-concave and unimodal sequences in algebra, combinatorics, and geometry, in: Graph Theory and Applications East and West: Proceedings of the First China-USA International Graph Theory Conference, Ann.\ New York Acad.\ Sci.\ 576 (1989) 500--535.


\bibitem{wagner}
D.G.\ Wagner.
Zeros of reliability polynomials and $f$-vectors of matroids.
{\em Combin.\ Probab.\ Comput.}  9 (2000) 167--190.

\bibitem{UKG1988}
S.\ Ueno, Y.\ Kajitani and S.\ Gotoh.
On the nonseparating independent set problem and feedback set problem for graphs with no vertex degree exceeding three.
{\em Discrete Math.} 72 (1988) 355--360.

%\bibitem{West}D.B.\ West. Introduction to Graph Theory, Prentice-Hall, Englewood Cliffs, NJ, 1996.



\end{thebibliography}
\end{document}